% \PassOptionsToPackage{usenames, dvipsnames}{xcolor}
\documentclass[a4paper, 11pt]{amsart}
% \pdfoutput=1
\usepackage{mlmodern}

\usepackage[margin=2.5cm, headheight=13pt, headsep=13pt]{geometry}
\usepackage[colorlinks,citecolor=blue,filecolor=magenta,linkcolor=magenta,urlcolor=magenta]{hyperref}

\usepackage{amsfonts}
\usepackage{amssymb}
\usepackage{amsmath}
\usepackage{bbm}
\usepackage{mathrsfs,array}
\usepackage{tikz}
\usepackage{enumerate}
\usepackage{enumitem}

\usepackage[square,numbers]{natbib} % has a nice set of citation styles and commands
\bibliographystyle{plainnat}

\def\T{\mathcal{T}}
\def\P{\mathcal{P}}
\def\S{\mathfrak{S}}

\def\des{\mathsf{des}}
\def\asc{\mathsf{asc}}

\def\r{\gamma}
\def\p{\pi}
\def\intt{\mathsf{int}}
 
\usepackage{xcolor}

\DeclareMathOperator{\DES}{DES}
\DeclareMathOperator{\codeg}{codeg}
\DeclareMathOperator{\argmax}{argmax}

\newcommand{\R}{\mathbb{R}}
\newcommand{\Z}{\mathbb{Z}}

\def\S{\mathfrak{S}}

\usepackage{array,tabularx}

\newtheorem{theorem}{Theorem}
\newtheorem{corollary}[theorem]{Corollary}
\newtheorem{lemma}[theorem]{Lemma}
\newtheorem{proposition}[theorem]{Proposition}

\theoremstyle{definition}
\newtheorem{rem}{Remark}
\newtheorem{question}[theorem]{Question}
\newtheorem{example}[theorem]{Example}
\newtheorem{definition}[theorem]{Definition}

\title{Colored Multiset Eulerian Polynomials}

%    author one information
\author{Danai Deligeorgaki}
\address{Department of Mathematics, KTH, Stockholm, Sweden}
\email{danaide@kth.se}

%    author two information
\author{Bin Han}
\address{Department of Obstetrics and Gynecology, Sahlgrenska Academy, University of Gothenburg,  41685 Gothenburg, Sweden}
\email{bin.han@gu.se}

%    author three information
\author{Liam Solus}
\address{Department of Mathematics, KTH, Stockholm, Sweden}
\email{solus@kth.se}

\date{\today}

\subjclass[2020]{%62H22 (primary) 62R01, 62D20, 13C70, 13P25 (secondary) \textcolor{red}{candidates:
05A05, 05A15, 05A19, 52B20, 52C07}
\keywords{%
  colored permutation, 
  multiset permutation,
  Eulerian polynomial,
  real-rooted polynomial,
  alternatingly increasing,
  self-interlacing,
  gamma positivity,
  Ehrhart theory}

\begin{document}

\begin{abstract}
Colored multiset Eulerian polynomials are a common generalization of MacMahon's multiset Eulerian polynomials and the colored Eulerian polynomials, both of which are known to satisfy well-studied distributional properties including real-rootedness, log-concavity and unimodality. 
The symmetric colored multiset Eulerian polynomials are characterized and used to prove sufficient conditions for a colored multiset Eulerian polynomial to be self-interlacing. 
The latter property implies the aforementioned distributional properties as well as others, including the alternatingly increasing property and bi-$\gamma$-positivity. 
To derive these results, multivariate generalizations of an identity due to MacMahon are deduced.  
The results are applied to a pair of questions, both previously studied in several special cases, that are seen to admit more general answers when framed in the context of colored multiset Eulerian polynomials. The first question pertains to $s$-Eulerian polynomials, and the second to interpretations of $\gamma$-coefficients. 
\end{abstract}

\maketitle

%%%%%%%%%%%%%%%%%%%%%%%%%%%%%%%%%%%%%
\section{Introduction}\label{sec1: intro}
%%%%%%%%%%%%%%%%%%%%%%%%%%%%%%%%%%%%%
For a positive integer $n$, we let $[n]:=\{1,2,...,n\}$ and $[n]_0:=\{0,1,...,n\}$. 
We denote by $M_{\textbf{m}}=\{1^{m_1},2^{m_2},...,n^{m_n}\}$, where $\textbf{m}:=(m_1,m_2,...,m_n)\in \Z_{>0}^n$, the multiset of cardinality $m:=m_1+\cdots+m_n$ containing $m_k$ copies of $k$, for $1\leq k \leq n$. 
The permutations of the multiset $M_{\textbf{m}}$, i.e., words $\pi=\pi_1\pi_2\ldots \pi_{m}$ in which $k$ appears exactly $m_k$ times, are called \emph{multiset permutations}, the set of which we denote $\S{_\textbf{m}}$. 
For $\textbf{r}:=(r_1,...,r_n)\in \Z_{>0}^n$, 
we denote by $\S{_\textbf{m}^{\textbf{r}}}$ the set of \emph{colored multiset permutations} of $M_\textbf{m}$, with color-vector $\textbf{r}:=(r_1,...,r_n)$; that is, words of the form $\pi^{\textbf{c}}=\pi_1^{c_1}\pi_2^{c_2}\ldots \pi_{m}^{c_{m}}$, where $k$ appears exactly $m_k$ times  and $1\leq c_k \leq r_{\pi_k}$,
for each $1\leq k \leq m$. 
When $\textbf{r}= \mathbf{1} := (1,...,1)$, the set $\S{_\textbf{m}^{\textbf{r}}}$ coincides with the multiset permutations $\S{_\textbf{m}}$. 
When $\mathbf{m} = \mathbf{1}$,  $\S{_\mathbf{1}^{\mathbf{1}}}$ is the permutations of $[n]$, $\S{_\mathbf{1}^{2\mathbf{1}}}$ is the {\em signed permutations} of $[n]$ and more generally $\S{_\mathbf{1}^{r\mathbf{1}}}$ for $r\geq 1$ is the \emph{$r$-colored permutations} of $[n]$, all of which appear frequently in algebraic combinatorics. 

Of particular interest are properties of descent statistics defined on  $\S{_\mathbf{1}^{r\mathbf{1}}}$ and $\S{_\mathbf{m}^{\mathbf{1}}}$, which admit a common generalization via a descent statistic on $\S{_\mathbf{m}^\mathbf{r}}$.
Let $k^{c_i}$ represent the multiset element $k$ with color $c_i\in[r_k]$.
We impose the \emph{color ordering} \cite{STEINGRIMSSON1994187} 
\[
1^1<2^1<\cdots<n^1<(n+1)^1<1^2<2^2<\cdots<n^2<1^3<\cdots<n^{\text{max}\{r_1,...,r_n\}}, 
\]
on the elements in the ground set $M_\mathbf{m}^{\mathbf{r}}$ of $\S{_\mathbf{m}^\mathbf{r}}$,
where we include the extra term $(n+1)^1$ by setting $\pi_{m+1}^{c_{m+1}}:=(n+1)^1$ in every colored permutation $\pi_1^{c_1}\pi_2^{c_2}\cdots \pi_{m}^{c_{m}}\in \S{_\textbf{m}^{\textbf{r}}}$. 
We say that $j\in [m]$ is a \emph{descent} (respectively, \emph{ascent}) of $\pi^{\textbf{c}}=\pi_1^{c_1}\pi_2^{c_2}\cdots \pi_{m}^{c_{m}}\pi_{m+1}^{c_{m+1}}$ if $\pi_j^{c_j}>\pi_{j+1}^{c_{j+1}}$ (respectively, $\pi_j^{c_j}<\pi_{j+1}^{c_{j+1}}$) according to the color ordering. 
We denote by $\DES(\pi^{\textbf{c}})$ the set of descents of a colored multiset permutation $\pi^{\textbf{c}}$, and we let $\des(\pi^{\textbf{c}}) = |\DES(\pi^{\textbf{c}})|$.
We also define $\text{ASC}(\pi^{\textbf{c}})$ and $\text{asc}(\pi^{\textbf{c}})$ analogously for ascents.

Early in the 20th century, MacMahon proved the following identity for the multiset permutations $\S{_{\mathbf{m}}^{\mathbf{1}}}$,

\begin{equation}
\label{macmahon's formula}\frac{\sum\limits_{\pi\in  \S{_\textbf{m}^\mathbf{1}}}x^{\text{des}(\pi)}}{(1-x)^{m+1}}= \sum_{t\geq 0}
 {\binom{t+m_1}{ m_1}}{\binom{t+ m_2}{ m_2}}\cdots 
 {\binom{t+ m_{n}}{ m_{n}}}x^t.
 \end{equation}
(see, for instance, \cite[Volume 2, Chapter IV, p. 211]{Macm}.)
When $\textbf{m} = \mathbf{1}$, MacMahon's formula recovers a well-known identity for the {\em $n$-th Eulerian polynomial}
\[
A_n = \sum_{\pi\in\S_n}x^{\des(\pi)},
\]
which is ubiquitous in enumerative and algebraic combinatorics.  
The Eulerian polynomials $A_n$ enjoy a variety of sought-after distributional properties including symmetry, unimodality, log-concavity, real-rootedness as well as $\gamma$-positivity \cite{PB}. 
When the Eulerian polynomials arise as special cases of larger families of generating polynomials, such as the \emph{$P$-Eulerian polynomials} \cite[Chapter 3]{stanley2011enumerative} or the \emph{$s$-Eulerian polynomials} \cite{savage2016mathematics}, many of these desirable distributional properties are often seen to extend to the larger family. 
In this paper, we consider the {\em colored multiset Eulerian polynomial} 
\[
A{{_\textbf{m}^{\textbf{r}}}} = \sum_{\pi^{\mathbf{c}} \in \S{_\textbf{m}^{\textbf{r}}}}x^{\des(\pi)}.
\]
These polynomials are a common generalization of both the \emph{$r$-colored Eulerian polynomials} $A{_{\mathbf{1}}^{r\mathbf{1}}}$ and MacMahon's \emph{multiset Eulerian polynomials} $A{_{\textbf{m}}^{\mathbf{1}}}$. 
The distributional properties of both $r$-colored Eulerian polynomials and MacMahon's multiset Eulerian polynomials have been studied extensively. 
For instance, the colored Eulerian polynomials were shown to be real-rooted, log-concave and unimodal \cite{savagevisontai}, and they are symmetric if and only if $r \in\{1, 2\}$.  
More recently, they were shown to possess a strong distributional property investigated by Br\"and\'en and Solus \cite{PS}; namely, they are interlaced by their own reciprocal.  
This property implies additional distributional properties including { bi-$\gamma$-positivity} and real-rootedness, unimodality and log-concavity of their {symmetric decomposition}.  

For MacMahon's multiset Eulerian polynomials, Simion \cite{simion} showed that $A{_{\mathbf{m}}^{\mathbf{1}}}$ is real-rooted and thus log-concave. 
Carlitz and Hoggatt \cite{carlitz1978generalized} further showed that $A{_{p\mathbf{1}}^{\mathbf{1}}}$ is symmetric for all $p \geq 1$, from which $\gamma$-positivity follows. 
Recently, Lin, Xu and Zhao \cite{multiseteulerian} gave a combinatorial interpretation of these $\gamma$-coefficients.

The colored Eulerian polynomials as well as the (uncolored) multiset Eulerian polynomial for $\textbf{m} = 2\mathbf{1}$ were further shown to be {$s$-Eulerian polynomials} by Savage and Visontai \cite{savagevisontai}.  
Savage and Visontai conjectured that the same is true for the Type-B analog $A{_{2\mathbf{1}}^{2\mathbf{1}}}$ \cite[Conjecture 3.25]{savagevisontai} in an effort to more fully describe the combinatorial generating polynomials that are \emph{$s$-Eulerian}; i.e., equal to some $s$-Eulerian polynomial. 
To settle this conjecture, Lin \cite[Theorem 6]{LZC} generalized MacMahon's formula to signed multiset permutations, showing that
\begin{align}
 \frac{\sum\limits_{\pi^{\mathbf{c}}\in  \S{_\textbf{m}^{2\mathbf{1}}}}x^{\text{des}(\pi)}}{(1-x)^{m+1}} & = \sum_{t\geq 0}\prod_{j=1}^{n} \bigg(\sum_{{{i=0}}}^{ m_j }
 {\binom{t+m_j-i}{ m_j-i}} 
 {\binom{t+ i-1}{ i}}
 \bigg)x^t, \label{lin's formula}\\
  \frac{\sum\limits_{\pi^{\mathbf{c}}\in  \S{_\textbf{m}^{2\mathbf{1}}}}x^{\text{des}(\pi)}}{(1-x)^{m+1}} &= \sum_{t\geq 0}\prod_{j=1}^{n}
 {\binom{2t+m_j}{ m_j}}
 x^t. \label{eqn:lin2}
% \end{split}
\end{align}

In this paper, we study how these observed properties of the colored Eulerian polynomials and MacMahon's multiset Eulerian polynomials extend to their common generalization of colored multiset Eulerian polynomials, providing a unified theory for the above results. 
In Section~\ref{sec:distributionalproperties}, we give a characterization of the symmetric (e.g. palindromic) $A{_{\textbf{m}}^{\textbf{r}}}$, generalizing the result of Carlitz and Hoggatt \cite{carlitz1978generalized}. We then apply it to show that all $A{_{\textbf{m}}^{\textbf{r}}}$ of degree at least $m$ are interlaced by their own reciprocal.  
This generalizes the result of \cite{PS} for colored Eulerian polynomials, and implies that these polynomials satisfy an extensive catalogue of distributional properties, including the recently popular alternatingly increasing property and bi-$\gamma$-positivity. 
To prove these results, an identity of multivariate generating functions, generalizing MacMahon's formula, is utilized. 
Hence, in Section~\ref{sec:generatingfunctions}, we first study generating functions associated to descents of colored multiset permutations, providing generalizations of identities for permutations, signed permutations  {and permutations with fixed number of colors} due to MacMahon,  {Steingr{\'{i}}msson}, Carlitz, Bagno and Biaglioli, as well as Beck and Braun. 

In Section~\ref{sec:interpretations}, we apply the results of Sections~\ref{sec:generatingfunctions} and~\ref{sec:distributionalproperties} to a pair of questions that are actively studied in several special cases. 
These questions may simply, and more generally, be framed as questions regarding colored multiset Eulerian polynomials:  
\begin{enumerate}
    \item (Question~\ref{quest:s-Eulerian}) When is a colored multiset Eulerian polynomial $s$-Eulerian?
    \item (Question~\ref{prob: gamma coeffs}) What is a combinatorial interpretation of the $\gamma$-coefficients of a bi-$\gamma$-positive colored multiset Eulerian polynomial?
\end{enumerate}
The first question provides a general framework for the investigations initiated by Savage and Visontai \cite{savagevisontai}. 
In this context, we prove a common generalization of the existing results on the colored Eulerian polynomials $A{_\mathbf{1}^{r\mathbf{1}}}$ and the polynomials $A{_{2\mathbf{1}}^{\mathbf{1}}}$ and $A{_{2\mathbf{1}}^{2\mathbf{1}}}$ (described above). 
We also observe that there exist (un)colored multiset Eulerian polynomials that are not $s$-Eulerian. 
Regarding the second question, we extend the results of Lin, Xu and Zhao \cite{multiseteulerian} on interpretations of $\gamma$-coefficients for multiset Eulerian polynomials.
In doing so we provide some basic results that align their investigations with a line of research proposed by Athanasiadis (Remark~\ref{rmk:gammacoeffs}).  
Perhaps worthy of note is the observation that the values of $\mathbf{m}$ and $\mathbf{r}$ for which Question~\ref{quest:s-Eulerian} and, respectively,  Question~\ref{prob: gamma coeffs} currently have a solution are closely related (see Proposition~\ref{prop: s-lecture hall} and Remark~\ref{rmk:gammacoeffs}).

\section{Generating function identities}
\label{sec:generatingfunctions}

In this section, we derive several identities of multivariate generating functions associated to descents (and ascents) in colored multiset permutations, generalizing some classic results. 
Throughout this section, we fix a positive integer $n$ and the positive integral vectors $\mathbf{m}=(m_1,...,m_n)$ and $\mathbf{r}=(r_1,...,r_n)$. 
We let $\pi^{\textbf{c}}=\pi_1^{c_1}\cdots \pi_m^{c_m} \pi_{m+1}^{c_{m+1}}$ denote a permutation in $\mathfrak{S}{_\textbf{m}^{\textbf{r}}}$.

\subsection{A multivariate identity}
\label{subsec:identity1}
To derive the desired results on the distributional properties of the polynomial $A{{_\textbf{m}^{\textbf{r}}}}$ in Section~\ref{sec:distributionalproperties}, we first give a generalization of MacMahon's formula~\eqref{macmahon's formula} to the colored multiset permutations.  
The required univariate identity can be recovered from a multivariate identity that also specializes to other previously observed generalizations and q-analogues of MacMahon's formula. 
The relevant multivariate identity is the following.

\begin{theorem}\label{thm: product formula multivariate}
 For positive integral vectors $\mathbf{m}=(m_1,...,m_n)$ and $\mathbf{r}=(r_1,...,r_n)$, we have
 
\begin{align}\label{first genera} & 
\sum_{\pi^{\textbf{c}} \in \S{_\textbf{m}^{\mathbf{r}}}}\frac{
z_{\pi_1}^{c_{1}-1}\cdots z_{\pi_m}^{c_{m}-1}
\prod\limits_{\substack{i\in[m+1]\setminus\{1\} \\ i-1 \in \text{DES}(\pi^{\textbf{c}})}}z_{\pi_i}^{r_{\pi_i}}\cdots z_{\pi_m}^{r_{\pi_m}}z_{m+1}}{\prod\limits_{i\in [m+1]}\big(1-z_{\pi_i}^{r_{\pi_i}}\cdots z_{\pi_m}^{r_{\pi_m}}z_{m+1}\big)}
& =\sum_{t\geq 0} z_{m+1}^t   
\prod_{k\in[n]} 
\begin{bmatrix}r_kt +m_k \\  m_k \end{bmatrix}_{z_{k}}.
\end{align}
\end{theorem}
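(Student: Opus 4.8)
The plan is to expand the geometric series on the left of \eqref{first genera}, to recognize the result as a weighted sum over pairs $(\pi^{\mathbf{c}},\mathbf{b})$ formed by a colored multiset permutation and a suitably constrained non-negative integer vector, and then to exhibit a weight-preserving bijection from these pairs onto the lattice points enumerated by the right-hand side, in the spirit of the classical $P$-partition proof of MacMahon's formula \eqref{macmahon's formula}.

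\emph{Expanding the two sides.} For $\pi^{\mathbf{c}}\in\S_{M_{\mathbf{m}}^{\mathbf{r}}}$ write $W_i:=z_{\pi_i}^{r_{\pi_i}}\cdots z_{\pi_m}^{r_{\pi_m}}z_{m+1}$ for $i\in[m]$ and $W_{m+1}:=z_{m+1}$, so the left-hand summand indexed by $\pi^{\mathbf{c}}$ is $z_{\pi_1}^{c_1-1}\cdots z_{\pi_m}^{c_m-1}\big(\prod_{i:\,i-1\in\DES(\pi^{\mathbf{c}})}W_i\big)\prod_{i\in[m+1]}(1-W_i)^{-1}$. Expanding $(1-W_i)^{-1}=\sum_{a_i\ge0}W_i^{a_i}$ and folding the descent factor into the exponents turns this summand into $z_{\pi_1}^{c_1-1}\cdots z_{\pi_m}^{c_m-1}\sum_{\mathbf{b}}\prod_{i\in[m+1]}W_i^{b_i}$, where $\mathbf{b}=(b_1,\dots,b_{m+1})$ ranges over non-negative vectors with $b_{j+1}\ge1$ whenever $j\in\DES(\pi^{\mathbf{c}})$. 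Putting $B_j:=b_1+\cdots+b_j$ and $v_j:=r_{\pi_j}B_j+(c_j-1)$, a direct computation gives
\[
z_{\pi_1}^{c_1-1}\cdots z_{\pi_m}^{c_m-1}\prod_{i\in[m+1]}W_i^{b_i}=z_{m+1}^{B_{m+1}}\prod_{j\in[m]}z_{\pi_j}^{v_j},
\]
so the left-hand side equals $\sum_{\pi^{\mathbf{c}}}\sum_{\mathbf{b}}z_{m+1}^{B_{m+1}}\prod_{j\in[m]}z_{\pi_j}^{v_j}$. On the right, expanding each $z_k$-binomial as $\sum z_k^{\lambda_1^{(k)}+\cdots+\lambda_{m_k}^{(k)}}$ over weakly increasing sequences $0\le\lambda_1^{(k)}\le\cdots\le\lambda_{m_k}^{(k)}\le r_kt$ rewrites it as $\sum_{t\ge0}\sum_{(\lambda^{(k)})_k}z_{m+1}^{\,t}\prod_{k\in[n]}\prod_{\ell\in[m_k]}z_k^{\lambda_\ell^{(k)}}$.

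\emph{The bijection.} Send $(\pi^{\mathbf{c}},\mathbf{b})$ to the pair $(t,(\lambda^{(k)})_k)$ with $t:=B_{m+1}$ and $\lambda^{(k)}$ the list of values $v_j$ over the positions $j$ with $\pi_j=k$, in increasing order of $j$; by the displayed identity this preserves the monomial. For the inverse, given $(t,(\lambda^{(k)})_k)$, write $\lambda_\ell^{(k)}=r_kq_\ell^{(k)}+s_\ell^{(k)}$ with $s_\ell^{(k)}\in\{0,\dots,r_k-1\}$, form the $m$ tokens recording (letter $k$, level $q_\ell^{(k)}$, color $s_\ell^{(k)}+1$), and read them off in weakly increasing order of level, breaking ties inside a level by the color ordering on the colored elements $k^{s_\ell^{(k)}+1}$; the levels recover $B_1\le\cdots\le B_m$ and $B_{m+1}:=t$ recovers $\mathbf{b}$. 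One then checks that the two maps are mutually inverse, which identifies the two expansions term by term.

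\emph{The main obstacle.} The heart of the argument is verifying that the forward map actually lands in the index set of the right-hand side, namely that each $\lambda^{(k)}$ is weakly increasing with all entries $\le r_kt$; both facts hinge on the color ordering. For monotonicity, take consecutive positions $j<j'$ with $\pi_j=\pi_{j'}=k$: if $B_j<B_{j'}$ then $v_{j'}\ge r_kB_{j'}\ge r_k(B_j+1)>r_kB_j+(r_k-1)\ge v_j$; if $B_j=B_{j'}$ then $b_{j+1}=\cdots=b_{j'}=0$, so none of $j,\dots,j'-1$ is a descent, whence $\pi_j^{c_j}\le\cdots\le\pi_{j'}^{c_{j'}}$, forcing $c_j\le c_{j'}$ and hence $v_j\le v_{j'}$. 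For the bound, if $B_j<t$ then $v_j\le r_k(t-1)+(r_k-1)<r_kt$, while if $B_j=t=B_{m+1}$ then $b_{j+1}=\cdots=b_{m+1}=0$, so $\pi_j^{c_j}\le\pi_{j+1}^{c_{j+1}}\le\cdots\le\pi_{m+1}^{c_{m+1}}=(n+1)^1$; since the colored elements below $(n+1)^1$ are precisely those of color $1$, this forces $c_j=1$ and $v_j=r_kt$. The same inspection of the ``flat'' stretches (where $B_\bullet$ is constant) shows that the original word $\pi^{\mathbf{c}}$ is already sorted within each level-block, so reading tokens off in the reverse map reproduces exactly $(\pi^{\mathbf{c}},\mathbf{b})$ and produces the unique colored multiset permutation compatible with $(t,(\lambda^{(k)})_k)$; comparing coefficients then yields \eqref{first genera}.
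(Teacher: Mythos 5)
Your argument is correct and is essentially the paper's own proof in a different notation: the pairs $(\pi^{\mathbf{c}},\mathbf{b})$ obtained by expanding the geometric series are exactly the Gessel--Stanley barred permutations (with $b_i$ the bar counts), and your token decomposition $\lambda^{(k)}_\ell = r_k q^{(k)}_\ell + s^{(k)}_\ell$ is precisely the paper's encoding of placing a $(s+1)$-colored copy of $k$ in the space after the $q$-th of $t$ bars, with the $q$-binomial expansion over weakly increasing sequences playing the same role on the right-hand side. Your checks of monotonicity, the bound $\lambda^{(k)}_\ell \le r_k t$, and the forced color $1$ at level $t$ make explicit the verifications the paper summarizes when asserting that every $t$-barred permutation arises uniquely from the insertion procedure.
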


The identity in Theorem~\ref{thm: product formula multivariate} specializes to several known identities. 
Setting $z_{m+1}=x$, $z_k=1$ and $\textbf{r} = \mathbf{1}$ recovers~\eqref{macmahon's formula}. 
Similarly,~\eqref{eqn:lin2} %the equality of the first and last terms in \eqref{lin's formula} 
is recovered by setting $z_{m+1}=x$, $z_k=1$ and $\textbf{r} = 2\mathbf{1}$.  Equations~\eqref{macmahon's formula} and~\eqref{eqn:lin2} also have $q$-analogs that are analogously recovered by an appropriate specialization.
When $\textbf{m} = \mathbf{1}$ and $\mathbf{r} = r\mathbf{1}$ for a positive integer $r$, \eqref{first genera} specializes to a formula of Steingr{\'{i}}msson \cite[Theorem 17]{STEINGRIMSSON1994187} for $z_{m+1}=x$ and $z_k=1$ (and its $q$-analog, \cite[Proposition 8.1]{Biagioli2009EnumeratingWP},                                                                                                        for $z_k=q$).  Theorem \ref{thm: product formula multivariate} can also be viewed as a generalization of the $q$-binomial theorem \cite[Theorem 2.1]{doi:10.1137/1021088} (case $n=1$).
Another interesting evaluation arises for $z_{m+1}=x$ and $z_k=q$ for all $k\in [n]$, for $\mathbf{r} = r\mathbf{1}$, where (a flag version of) a permutation statistic similar to the inversion-sequence statistic dmaj considered in \cite{SAVAGE2012850} appears on the enumerator in \eqref{first genera}. We define this statistic and present the identity in detail below. 

The proof of Theorem~\ref{thm: product formula multivariate} is combinatorial, and uses the notion of barred permutations popularized by Gessel and Stanley \cite{Gessel} and further  {developed} by Lin \cite{LZC} in the study of signed multiset permutations. 
A \textit{barred permutation} on $\pi^{\mathbf{c}} =\pi_1^{c_1}\cdots \pi_{m}^{c_{m}}(n+1)^1\in\S{_\textbf{m}^{\textbf{r}}}$
 is obtained by inserting one or more vertical bars
between letters in the word $\pi^{\mathbf{c}}$ such that there is at least one bar in every descent space of $\pi^{\mathbf{c}}$. 
For example, for the colored permutation $\pi^{\mathbf{c}}=1^22^12^22^21^13^1$ of the multiset $M_\textbf{m}^{\mathbf{r}}$ with $\mathbf{m}=(2,3)$ and $\mathbf{r}=(2,2)$ with descents in positions $1$ and $4$, $||1^2|2^12^2|2^2||1^13^1$ is a barred permutation on $\pi^{\mathbf{c}}$ but $||1^22^12^2|2^2||1^13^1$ is not.
Note that the bars can also be inserted at the $0$-th space before $\pi_1^{c_1}$, but not after the entry $\pi_{m+1}^{c_{m+1}}=(n+1)^1$; for instance, $1^2|2^12^22^2|1^13^1|$ cannot occur.
We denote by $B(\S{_\textbf{m}^{\mathbf{r}}})$ the set of barred permutations on $\S{_\textbf{m}^{\mathbf{r}}}$.
\begin{proof}[Proof of Theorem~\ref{thm: product formula multivariate}]
For a barred permutation $\sigma \in B(\S{_\textbf{m}^{\mathbf{r}}})$ on $\pi^{\textbf{c}} \in \S{_\textbf{m}^{\mathbf{r}}}$, let $d_i$ denote the number of bars between $\pi_{i-1}^{c_{i-1}}$ and $\pi_i^{c_i}$, for $i=2,...,m+1$, and let $d_{1}$ correspond to the number of bars to the left of $\pi_1^{c_1}$. We define the weight of $\sigma$ as

\[wt(\sigma):=z_{\pi_1}^{c_{1}-1}\cdots z_{\pi_m}^{c_{m}-1}
\prod\limits_{\substack{i\in[m+1] }}\big(z_{\pi_i}^{r_{\pi_i}}\cdots z_{\pi_m}^{r_{\pi_m}}z_{m+1}\big)^{d_{i}}
.\]

For example, for the barred permutation $\sigma= ||1^2|2^12^2|2^2||1^13^1$ on $\pi^{\textbf{c}} \in  \S{_\textbf{m}^{\mathbf{r}}}$ for $\textbf{m}=(2,3)$ and any vector of positive integers $\mathbf{r}=(r_1,r_2)$,
we compute \[wt(\sigma)=\ (z_{1}z_{2}^{2})(z_{1}^{2r_1}z_{2}^{3r_2}z_6)^2(z_{1}^{r_1}z_{2}^{3r_2}z_6)(z_{1}^{r_1}z_{2}^{r_2}z_6)(z_{1}^{r_1}z_6)^2\ =\ z_{1}^{8r_1+1}z_{2}^{10r_2+2}z_6^6.\] 
To prove the desired equality, we sum over the weights of all barred permutations in $B(\S{_\textbf{m}^{\mathbf{r}}})$ in two ways.

First, we fix a permutation $\pi^{\textbf{c}} \in \S{_\textbf{m}^{\mathbf{r}}}$ and sum over the set of barred permutations on $\pi^{\textbf{c}}$, which 
we denote by $B(\pi^{\textbf{c}})$.
The key observation here {(due to Gessel and Stanley \cite{Gessel})} %(as noted in \cite{LZC})
is that a barred permutation on $\pi^{\textbf{c}}$ can be obtained by inserting one bar
in each descent space and then inserting any number of bars in each space ($m+1$ spaces in total). 
Therefore, summing over the weights of all possible barred permutations on $\pi^{\textbf{c}}$ yields
\begin{align*}
& \sum_{\sigma \in B(\pi^{\textbf{c}})}wt(\sigma)
   \\ & = 
   z_{\pi_1}^{c_{1}-1}\cdots z_{\pi_m}^{c_{m}-1}
\prod\limits_{\substack{ i\in[m+1]\setminus\{1\}  \\  i-1 \in \text{DES}(\pi^{\textbf{c}})}}z_{\pi_i}^{r_{\pi_i}}\cdots z_{\pi_m}^{r_{\pi_m}}z_{m+1}
 \prod\limits_{\substack{ i\in [m+1]}}\Big(\sum\limits_{j\geq 0}\big(z_{\pi_i}^{r_{\pi_i}}\cdots z_{\pi_m}^{r_{\pi_m}}z_{m+1}\big)^j\Big) 
\\ & =\frac{
z_{\pi_1}^{c_{1}-1}\cdots z_{\pi_m}^{c_{m}-1}
\prod\limits_{\substack{i\in[m+1]\setminus\{1\} \\ i-1 \in \text{DES}(\pi^{\textbf{c}})}}z_{\pi_i}^{r_{\pi_i}}\cdots z_{\pi_m}^{r_{\pi_m}}z_{m+1}}{\prod\limits_{i\in [m+1]}\big(1-z_{\pi_i}^{r_{\pi_i}}\cdots z_{\pi_m}^{r_{\pi_m}}z_{m+1}\big)}.
\end{align*}
{Therefore, summing over all permutations $\pi^c\in \S{_\textbf{m}^{\mathbf{r}}}$  gives 
\begin{align*}
& \sum_{\sigma \in B(\S{_\textbf{m}^{\mathbf{r}}})}wt(\sigma)= \sum_{\pi^{\textbf{c}} \in \S{_\textbf{m}^{\mathbf{r}}}}\frac{
z_{\pi_1}^{c_{1}-1}\cdots z_{\pi_m}^{c_{m}-1}
\prod\limits_{\substack{i\in[m+1]\setminus\{1\} \\ i-1 \in \text{DES}(\pi^{\textbf{c}})}}z_{\pi_i}^{r_{\pi_i}}\cdots z_{\pi_m}^{r_{\pi_m}}z_{m+1}}{\prod\limits_{i\in [m+1]}\big(1-z_{\pi_i}^{r_{\pi_i}}\cdots z_{\pi_m}^{r_{\pi_m}}z_{m+1}\big)}.
\end{align*}}

Let us now partition $B(\S{_\textbf{m}^{\mathbf{r}}})$ into sets of barred permutations with exactly $t$ bars, denoted by $B_t(\S{_\textbf{m}^{\mathbf{r}}})$. 
Each $t$-barred permutation $\sigma \in B_t(\S{_\textbf{m}^{\mathbf{r}}})$ can be constructed by first placing $t$ bars in a line and then inserting any choice of $m_k$ (not necessarily distinct) colored copies of $k$ from $\{k^1,k^2,...,k^{r_k}\}$ into the $t+1$ spaces between adjacent bars, including the spaces on the left and right side (for $k=1,...,n$).
However, we add the condition that only ${1^1,2^1,...,n^1}$ can be inserted on the right side since we have attached $\pi_{m+1}^{c_{m+1}}=(n+1)^1$ to every permutation $\pi^{\textbf{c}}$ of $\S{_\textbf{m}^{\mathbf{r}}}$ (and there needs to be at least one bar following every descent). 
Moreover, when inserting the elements in between bars, we make sure that elements not separated by a bar are in non-decreasing order (to avoid forming descents between bars). 
These last two conditions guarantee that the permutations constructed satisfy the conditions of barred permutations. 

Every $t$-barred permutation arises uniquely in this way, i.e., by choosing $m_k$ out of $r_kt+1$ options (with repetition allowed). Let us realize by $ar_k+b$, the choice to place a $(b+1)$-colored copy of $k$ in the space after the $a$-th bar, where $0\leq a \leq t$ and $0\leq b \leq r_k-1$. Note that $a=0$ corresponds to the space right before the leftmost bar.
 Inserting an $l$-colored copy of $k$ in the space right after the $j$-th bar contributes the factor $z_{k}^{r_kj+l-1}$ to the weight of $\sigma\in B_t(\S{_\textbf{m}^{\mathbf{r}}})$. Moreover, the rightmost term, $\pi_{m+1}^{c_{m+1}}=(n+1)^1$, of $\pi^c$ contributes $z_{m+1}^t$ to the weight. 
Using the following $q$-binomial coefficient identity 
\begin{equation}\label{useful q-binomial identity} \begin{bmatrix}A+B \\  A \end{bmatrix}_{q} =\sum\limits_{0\leq g_1\leq g_2\leq...\leq g_A\leq B}q^{\sum_{i\in [A]} g_i}, \end{equation} \cite[Proposition 4.1]{FoataHan}, 
 we can describe the desired sum as follows:
\[\sum_{\sigma \in B_t(\S{_\textbf{m}^{\mathbf{r}}})}wt(\sigma)=z_{m+1}^t
\prod_{k\in[n]} 
\begin{bmatrix}r_kt +m_k \\  m_k \end{bmatrix}_{z_{k}}. \]

Summing over all possible values of $t$ completes the proof.
\end{proof}

For our study of the distributional properties of $A{{_{\textbf{m}}^{\textbf{r}}}}$, the relevant specialization of the identity in Theorem~\ref{thm: product formula multivariate} is given by $z_{m+1} = x$, $z_k = 1$, which yields the following corollary.

\begin{corollary}
    \label{cor: main id}
    For positive integral vectors $\mathbf{m}=(m_1,...,m_n)$ and $\mathbf{r}=(r_1,...,r_n)$, we have
    \[
    \frac{A{{_{\textbf{m}}^{\textbf{r}}}}}{(1 - x)^{m+1}} = \sum_{t\geq0}\left(\prod_{i=1}^n\binom{r_it + m_i}{m_i}\right)x^t.
    \]
\end{corollary}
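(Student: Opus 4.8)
The plan is to obtain Corollary~\ref{cor: main id} as a direct specialization of Theorem~\ref{thm: product formula multivariate}, putting $z_{m+1}=x$ and $z_k=1$ for every $k\in[n]$. The first point to pin down is the ring in which \eqref{first genera} is being asserted: regarding $z_1,\dots,z_n$ as indeterminates and expanding everything as a formal power series in $z_{m+1}$, each denominator factor $1-z_{\pi_i}^{r_{\pi_i}}\cdots z_{\pi_m}^{r_{\pi_m}}z_{m+1}$ has constant term $1$, hence is a unit in $\Z[z_1,\dots,z_n][[z_{m+1}]]$, so both sides of \eqref{first genera} live in this ring. The substitution $z_k\mapsto 1$ is then a well-defined ($z_{m+1}$-adically continuous) ring homomorphism onto $\Z[[z_{m+1}]]=\Z[[x]]$, under which the denominator factors remain units since they map to $1-x$. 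No convergence issue arises, because the outer sum over $\pi^{\mathbf{c}}$ is finite.

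Next I would treat the two sides separately. On the right-hand side, I use the classical fact that the $q$-binomial coefficient $\begin{bmatrix}A\\ B\end{bmatrix}_q$ evaluates to the ordinary binomial coefficient $\binom{A}{B}$ at $q=1$ (immediate from \eqref{useful q-binomial identity}, or from the standard product formula). Hence $\prod_{k\in[n]}\begin{bmatrix}r_kt+m_k\\ m_k\end{bmatrix}_{z_k}$ becomes $\prod_{i=1}^n\binom{r_it+m_i}{m_i}$, and the right-hand side of \eqref{first genera} becomes $\sum_{t\geq0}\bigl(\prod_{i=1}^n\binom{r_it+m_i}{m_i}\bigr)x^t$, exactly the right-hand side of the corollary.

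On the left-hand side, I fix $\pi^{\mathbf{c}}\in\S_{M_{\mathbf{m}}^{\mathbf{r}}}$ and apply the substitution to its summand. Every product of the form $z_{\pi_i}^{r_{\pi_i}}\cdots z_{\pi_m}^{r_{\pi_m}}z_{m+1}$ collapses to $x$; consequently the denominator $\prod_{i\in[m+1]}\bigl(1-z_{\pi_i}^{r_{\pi_i}}\cdots z_{\pi_m}^{r_{\pi_m}}z_{m+1}\bigr)$ becomes $(1-x)^{m+1}$, the prefactor $z_{\pi_1}^{c_1-1}\cdots z_{\pi_m}^{c_m-1}$ becomes $1$, and the descent product $\prod_{i\,:\,i-1\in\DES(\pi^{\mathbf{c}})}z_{\pi_i}^{r_{\pi_i}}\cdots z_{\pi_m}^{r_{\pi_m}}z_{m+1}$ becomes a product of $|\DES(\pi^{\mathbf{c}})|=\des(\pi^{\mathbf{c}})$ copies of $x$, i.e.\ $x^{\des(\pi^{\mathbf{c}})}$. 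Summing over all $\pi^{\mathbf{c}}$ then gives $\bigl(\sum_{\pi^{\mathbf{c}}\in\S_{M_{\mathbf{m}}^{\mathbf{r}}}}x^{\des(\pi^{\mathbf{c}})}\bigr)/(1-x)^{m+1}=A_{M_{\mathbf{m}}^{\mathbf{r}}}/(1-x)^{m+1}$. Equating the two specialized sides yields the claim. There is essentially no obstacle beyond the bookkeeping just described; the only point that genuinely needs care is the observation in the first paragraph that the specialization is legitimate at the level of formal power series, so that "setting $z_k=1$" is justified despite the denominators appearing in \eqref{first genera}.
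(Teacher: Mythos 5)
Your proposal is correct and follows exactly the paper's route: the paper obtains Corollary~\ref{cor: main id} precisely as the specialization $z_{m+1}=x$, $z_k=1$ of Theorem~\ref{thm: product formula multivariate}, and your careful tracking of the $q$-binomial at $q=1$ and of the denominator collapsing to $(1-x)^{m+1}$ matches the intended (if unstated) bookkeeping. The only difference is that you make the formal-power-series justification of the substitution explicit, which the paper leaves implicit.
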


\begin{rem}
    \label{rem: altproof}
We note that Corollary~\ref{cor: main id} may alternatively be proven using the identity \cite[Proposition~3.5(5)]{Branden2016LectureH} associated to the $s$-lecture hall order polytopes of Br\"and\'en and Leander. 
Specifically, Corollary~\ref{cor: main id} arises as a mild generalization of \cite[Corollary~3.6]{Branden2016LectureH}.
The proof included here shows that the direct combinatorial proof method of Stanley and Gessel \cite[Second Proof of Theorem~2.1]{Gessel} generalizes to colored multiset permutations.
\end{rem}

% Corollary~\ref{cor: main id} says that the colored multiset Eulerian polynomial $A{{_{\textbf{m}}^{\textbf{r}}}}$ is $\prod_{i=1}^n\binom{r_ix + m_i}{m_i}$ expressed in the multinomial basis for the vector space of univariate polynomials of degree at most $d$.
 {Note that the polynomial $\prod_{i=1}^n\binom{r_ix + m_i}{m_i}$ lives in the vector space of univariate polynomials of degree at most $d$ with standard basis $1, x, \ldots, x^d$.  Corollary~\ref{cor: main id} says that the coefficient sequence of the colored multiset Eulerian polynomial $A{{_{\textbf{m}}^{\textbf{r}}}}$ is given by the coefficients of $\prod_{i=1}^n\binom{r_ix + m_i}{m_i}$ when it is expressed in the multinomial basis $\binom{x + d}{d}, \binom{x + d - 1}{d}, \ldots, \binom{x}{d}$ for this vector space. }
This observation will allow us to derive real-rootedness and interlacing results on the $\mathcal{I}_m$-decomposition of $A{{_{\textbf{m}}^{\textbf{r}}}}$ in Section~\ref{sec:distributionalproperties}.

We now derive a $q$-analog in the case $\mathbf{r}=r\mathbf{1}$. For a colored permutation  
$\pi^{\textbf{c}}\in \mathfrak{S}{_\textbf{m}^{\textbf{r}}}$, we let \begin{itemize}
    \item $\text{dmaj}(\pi^{\mathbf{c}})=\sum\limits_{i\in DES(\pi^\mathbf{c})}(m-i)$,
    \item $\text{fdmaj}(\pi^{\mathbf{c}})
    =r\cdot \text{dmaj}(\pi^{\mathbf{c}})+\sum\limits_{i\in [m]}{(c_i-1)}$.
\end{itemize}

\begin{corollary}\label{cor: fmaj product formula multivariate}
 For positive integral vectors $\mathbf{m}=(m_1,...,m_n)$ and $\mathbf{r}=r\mathbf{1}$ with $r\geq 1$, we have
 
\begin{align}\label{first genera.} & 
\frac{\sum\limits_{\pi^{\textbf{c}} \in \S{_\textbf{m}^{\mathbf{r}}}} x^{\text{des}(\pi^{\mathbf{c}})} q^{\text{fdmaj}(\pi^{\mathbf{c}})}}{\prod\limits_{i=0}^{m}\big(1-q^{ri}x\big)}
 =\sum_{t\geq 0} x^t   
\prod_{k\in[n]} 
\begin{bmatrix}rt +m_k \\  m_k \end{bmatrix}_{q}.
\end{align}
\end{corollary}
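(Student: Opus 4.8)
The plan is to obtain Corollary~\ref{cor: fmaj product formula multivariate} as a specialization of the multivariate identity in Theorem~\ref{thm: product formula multivariate}, by setting $z_{m+1}=x$ and $z_k = q$ for all $k\in[n]$ (with $\mathbf r = r\mathbf 1$). On the right-hand side, this is immediate: $\prod_{k\in[n]}\begin{bmatrix}r_kt+m_k\\ m_k\end{bmatrix}_{z_k}$ becomes $\prod_{k\in[n]}\begin{bmatrix}rt+m_k\\ m_k\end{bmatrix}_q$, so the entire content of the proof is to check that the left-hand side of~\eqref{first genera} collapses to the left-hand side of~\eqref{first genera.} under this substitution. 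So the first step is to record that substitution and announce that only the enumerator and denominator need to be identified.

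For the denominator: with $z_k = q$ and $z_{m+1}=x$, each factor $1 - z_{\pi_i}^{r_{\pi_i}}\cdots z_{\pi_m}^{r_{\pi_m}} z_{m+1}$ becomes $1 - q^{r(m+1-i)}x$, since there are $m+1-i$ indices $j\in\{i,\dots,m\}$ and each contributes $q^{r}$ (using $r_{\pi_j}=r$). As $i$ ranges over $[m+1]$, the exponent $m+1-i$ ranges over $\{0,1,\dots,m\}$, so $\prod_{i\in[m+1]}(1-q^{r(m+1-i)}x) = \prod_{i=0}^m (1-q^{ri}x)$, matching the denominator of~\eqref{first genera.}. This is a routine reindexing that I would state in one line.

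For the enumerator, fix $\pi^{\mathbf c}\in\S_{M_\mathbf m^{r\mathbf 1}}$. The factor $z_{\pi_1}^{c_1-1}\cdots z_{\pi_m}^{c_m-1}$ becomes $q^{\sum_{i\in[m]}(c_i-1)}$. For the descent product: for each $i$ with $i-1\in\DES(\pi^{\mathbf c})$, the term $z_{\pi_i}^{r_{\pi_i}}\cdots z_{\pi_m}^{r_{\pi_m}} z_{m+1}$ becomes $q^{r(m+1-i)}x$, and multiplying over all descents contributes a factor $x^{\des(\pi^{\mathbf c})}$ together with $q^{r\sum_{i-1\in\DES(\pi^{\mathbf c})}(m+1-i)}$. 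Reindexing the exponent sum by $j = i-1\in\DES(\pi^{\mathbf c})$ gives $r\sum_{j\in\DES(\pi^{\mathbf c})}(m-j) = r\cdot\mathrm{dmaj}(\pi^{\mathbf c})$. Combining with the $q^{\sum_i(c_i-1)}$ factor yields exactly $q^{r\,\mathrm{dmaj}(\pi^{\mathbf c}) + \sum_{i\in[m]}(c_i-1)} = q^{\mathrm{fdmaj}(\pi^{\mathbf c})}$ by the definition of $\mathrm{fdmaj}$. Hence the enumerator term for $\pi^{\mathbf c}$ is $x^{\des(\pi^{\mathbf c})} q^{\mathrm{fdmaj}(\pi^{\mathbf c})}$, and summing over $\pi^{\mathbf c}\in\S_{M_\mathbf m^{r\mathbf 1}}$ recovers the enumerator of~\eqref{first genera.}.

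I do not anticipate a genuine obstacle here — the result is a bookkeeping specialization of Theorem~\ref{thm: product formula multivariate}. The only point requiring mild care is the reindexing $j = i-1$ in the descent-exponent sum so that $\sum_{i-1\in\DES}(m+1-i)$ is correctly identified with $\mathrm{dmaj}(\pi^{\mathbf c}) = \sum_{j\in\DES}(m-j)$; one should double-check the off-by-one there (the extra letter $\pi_{m+1}^{c_{m+1}}=(n+1)^1$ means $\DES\subseteq[m]$, and position $j=m$ can indeed be a descent, contributing $0$ to $\mathrm{dmaj}$, which is consistent). Everything else is a direct substitution, and putting the three pieces (constant-color factor, descent factor, denominator) together completes the proof.
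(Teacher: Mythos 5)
Your proposal is correct and is exactly the paper's approach: the paper proves the corollary by the same specialization $z_{m+1}=x$, $z_k=q$ of Theorem~\ref{thm: product formula multivariate}, merely stating it in one line where you spell out the (correct) bookkeeping for the denominator and the identification of the exponent sum with $\mathrm{fdmaj}$.
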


\begin{proof}
The identity in \eqref{first genera.} can be seen as a specialization of the identity in Theorem~\ref{thm: product formula multivariate} for $z_{m+1}=x, z_k=q$, for $k\in [n]$, and $\mathbf{r}=r\mathbf{1}$.
\end{proof}

\subsubsection{Generating functions for ascents}
\label{subsubsec:ascents}
An identity analogous to \eqref{first genera} can be derived for ascents. We will use it to show that ascents and descents are equidistributed over the set of signed multiset permutations $\S{_\textbf{m}^{2\mathbf{1}}}$.

\begin{theorem}\label{thm: ascents MacMahon}
For positive integral vectors $\mathbf{m}=(m_1,...,m_n)$ and $\mathbf{r}=(r_1,...,r_n)$, we have

\begin{align}\label{eq: ascents MacMahon}
\sum_{\pi ^{\textbf{c}} \in \S{_\textbf{m}^{\mathbf{r}}}}\frac{
z_{\pi_1}^{r_{\pi_1}-c_{1}}\cdots z_{\pi_m}^{r_{\pi_m}-c_{m}}
\prod\limits_{\substack{i\in[m+1]\setminus\{1\} \\ i-1 \in ASC(\pi ^{\textbf{c}})}}z_{\pi_i}^{r_{\pi_i}}\cdots z_{\pi_m}^{r_{\pi_m}}z_{m+1}}{\prod\limits_{i\in [m+1]}\big(1-z_{\pi_i}^{r_{\pi_i}}\cdots z_{\pi_m}^{r_{\pi_m}}z_{m+1}\big)}
=\sum_{t\geq 0} z_{m+1}^t   
\prod_{k\in[n]} 
\begin{bmatrix}r_kt+(r_k-2) +m_k \\  m_k \end{bmatrix}_{z_{k}}.
\end{align}
\end{theorem}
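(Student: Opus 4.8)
The plan is to mimic the barred-permutation argument used to prove Theorem~\ref{thm: product formula multivariate}, replacing descents by ascents throughout and tracking the dual color exponents $r_{\pi_i}-c_i$ in place of $c_i-1$. As before, I would sum a suitably chosen weight over the set of barred permutations in two ways, where now a barred permutation on $\pi^{\textbf{c}} = \pi_1^{c_1}\cdots\pi_m^{c_m}(n+1)^1$ is required to carry at least one bar in every \emph{ascent} space of $\pi^{\textbf{c}}$. For such a barred permutation $\sigma$, with $d_i$ again denoting the number of bars immediately before $\pi_i^{c_i}$ (and $d_1$ the number to the left of $\pi_1^{c_1}$), I would set
\[
wt(\sigma) := z_{\pi_1}^{r_{\pi_1}-c_1}\cdots z_{\pi_m}^{r_{\pi_m}-c_m}\prod_{i\in[m+1]}\big(z_{\pi_i}^{r_{\pi_i}}\cdots z_{\pi_m}^{r_{\pi_m}}z_{m+1}\big)^{d_i}.
\]
Fixing $\pi^{\textbf{c}}$ and summing over barred permutations on it, the Gessel--Stanley observation (one forced bar per ascent space, then arbitrarily many in each of the $m+1$ spaces) gives the geometric-series factorization producing exactly the left-hand side of~\eqref{eq: ascents MacMahon}, just as in the proof of Theorem~\ref{thm: product formula multivariate}.

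\textbf{The other count.} Next I would partition the barred permutations by the number $t$ of bars and reconstruct each $t$-barred permutation by placing $t$ bars and then inserting, for each $k\in[n]$, a choice of $m_k$ colored copies of $k$ into the $t+1$ inter-bar spaces (left and right ends included), in non-\emph{increasing} order within each space so as to create no ascents between consecutive bars. The boundary bookkeeping changes because of the appended $(n+1)^1$: since $(n+1)^1$ is the largest letter, it forms an ascent with whatever precedes it unless a bar intervenes, so the rightmost genuine space (after the last bar) must be \emph{empty} — it cannot receive any colored copy of any $k$ — whereas the leftmost space (before the first bar) is unrestricted. Inserting an $\ell$-colored copy of $k$ just after the $j$-th bar should contribute $z_k^{\,r_k j + (r_k-\ell)}$ to the weight (the dual of the descent computation), so for fixed $k$ the available exponents, ranging $j$ from $0$ to $t-1$ in the usable spaces and $\ell$ from $1$ to $r_k$, run over $\{0,1,\dots,r_k t - 1\}$; one then chooses $m_k$ of these with repetition in non-increasing order. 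By the $q$-binomial identity~\eqref{useful q-binomial identity} (applied with the roles of the summation bounds arranged to produce the shift), this yields
\[
\sum_{\sigma\in B_t}wt(\sigma) = z_{m+1}^t\prod_{k\in[n]}\begin{bmatrix} r_k t + (r_k-2) + m_k \\ m_k\end{bmatrix}_{z_k},
\]
and summing over $t$ finishes the proof.

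\textbf{Main obstacle.} The delicate point is getting the boundary condition and the resulting exponent range exactly right so that the ``$+(r_k-2)$'' shift emerges rather than ``$+1$'' or ``$-1$''. I expect to verify it by carefully recounting: the available positions for a colored copy of $k$ are the spaces after bars $0,1,\dots,t-1$ (the space after bar $t$ being forbidden), and placing an $\ell$-colored copy, $\ell\in[r_k]$, after bar $j$ contributes $z_k^{r_k j + (r_k - \ell)}$; as $(j,\ell)$ ranges over $\{0,\dots,t-1\}\times[r_k]$, the exponent $r_k j + (r_k-\ell)$ takes each value in $\{0,1,\dots,r_k t-1\}$ exactly once. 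Writing $r_k t = (r_k t + (r_k-2)) - (r_k - 2)$ and matching with~\eqref{useful q-binomial identity} for parameters $A=m_k$, $B = r_k t + (r_k-2)$ after accounting for the empty right-hand space (which removes one "layer"), the shift lands correctly; a small direct check for $r_k=1$ (where the shift is $-1$, consistent with the right space being forbidden) and $r_k=2$ (shift $0$, matching that ascents and descents are equidistributed on signed multiset permutations) serves as a sanity test. Once this reconciliation is in place, everything else is formally identical to the proof of Theorem~\ref{thm: product formula multivariate}.
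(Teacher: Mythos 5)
Your overall strategy is exactly the paper's: an ascent version of barred permutations, the dual weight with exponents $r_{\pi_i}-c_i$, and the same two-way count. The left-hand side computation and the geometric-series step are fine. But there is a genuine error in the boundary analysis at the rightmost space, and it is precisely where the $(r_k-2)$ must come from. You assert that ``$(n+1)^1$ is the largest letter, so it forms an ascent with whatever precedes it unless a bar intervenes,'' and conclude that the space after the last bar must be empty. Under the paper's color ordering, however, $(n+1)^1$ is \emph{not} maximal: it satisfies $k^1<(n+1)^1<k^c$ for every $k\in[n]$ and every $c\ge 2$. Hence position $m$ is an ascent if and only if $c_m=1$, so the space after the last bar is forbidden only to color-$1$ letters; it may freely receive letters of any color $\ge 2$. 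The correct count of admissible (space, color) pairs for the letter $k$ is therefore $r_kt+(r_k-1)$ (all $r_k$ colors in each of the $t$ leftward spaces, plus $r_k-1$ colors in the rightmost space), and the exponents $r_kj+(r_k-\ell)$ over this admissible set sweep $\{0,1,\dots,r_kt+r_k-2\}$ exactly once, which is what produces $\begin{bmatrix} r_kt+(r_k-2)+m_k\\ m_k\end{bmatrix}_{z_k}$ via~\eqref{useful q-binomial identity}. This is the point the paper emphasizes: compared with the descent version (where \emph{only} color $1$ is allowed after the last bar), there are $r_k-2$ \emph{more} colors allowed there.

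Your ``empty rightmost space'' rule gives only $r_kt$ options, i.e.\ a shift of $-1$ for every $r_k$, which contradicts the theorem for all $r_k\ge 2$; in particular your $r_k=2$ sanity check actually fails under your rule ($\binom{2t-1+m_k}{m_k}$ versus the required $\binom{2t+m_k}{m_k}$), and the sentence ``after accounting for the empty right-hand space \dots the shift lands correctly'' does not reconcile a set of $r_kt$ exponents with a $q$-binomial requiring $r_kt+r_k-1$ of them. Once the boundary condition is corrected as above, the rest of your argument coincides with the paper's proof.
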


The proof of Theorem \ref{thm: ascents MacMahon} is analogous to the proof of Theorem \ref{thm: product formula multivariate}, where the notion of barred permutations is replaced with its analog for ascents, i.e., where we replace descents by ascents in the definition. The corresponding weight of such an ascent-centered barred permutation on $\pi^{\mathbf{c}}\in \S{_\textbf{m}^{\mathbf{r}}}$ can be defined as
\[
z_{\pi_1}^{r_{\pi_1}-c_{1}}\cdots z_{\pi_m}^{r_{\pi_m}-c_{m}}
\prod\limits_{\substack{i\in[m+1] }}\big(z_{\pi_i}^{r_{\pi_i}}\cdots z_{\pi_m}^{r_{\pi_m}}z_{m+1}\big)^{d_{i}}.
\]
The term $(r_k-2)$ in \eqref{eq: ascents MacMahon} corresponds to the $(r_k-2)$ more colors that are permitted for an entry to the right of the last bar, compared to barred permutations. For example, for the colored permutation $\pi^{\mathbf{c}}=1^22^12^22^21^13^1$ of the multiset $M_\textbf{m}^{\mathbf{r}}$ with $\mathbf{m}=(2,3)$ and $\mathbf{r}=(3,3)$% with descents in positions $1$ and $4$
, the string $||1^22^1|2^2|2^3||1^{s}3^1$ satisfies the ascent analog of barred permutations for $s\in \{2,3\}$ but not for $s=1$.
However, only elements of color $1$ can follow the last bar in the descent version of barred permutations.

For a colored permutation  
$\pi^{\textbf{c}}\in \mathfrak{S}{_\textbf{m}^{\textbf{r}}}$, we let $\text{amaj}(\pi^{\mathbf{c}})=\sum\limits_{i\in ASC(\pi^{\mathbf{c}})}(m-i)$.
We will also use the notation $c_j(\pi^{\mathbf{c}})$ to denote the number of entries $i\in [m]$ of $\pi^{\mathbf{c}}$ that are $j$-colored for $j\in [r]$ where $r=\text{max}\{r_1,...,r_n\}$. 
For example, for $\pi^{\textbf{c}}=1^11^22^22^11^33^1$ we have $c_1(\pi^{\mathbf{c}})=2, c_2(\pi^{\mathbf{c}})=2, c_3(\pi^{\mathbf{c}})=1$.
With this notation, $\text{fdmaj}(\pi^{\mathbf{c}})=r \cdot \text{dmaj}(\pi^{\mathbf{c}})+\sum\limits_{j\in [r]}(j-1)c_j(\pi^{\mathbf{c}})$. Similarly, we define  $\text{famaj}(\pi^{\mathbf{c}}):=r\cdot \text{amaj}(\pi^{\mathbf{c}})+\sum\limits_{j\in [r]}(r-j)c_j(\pi^{\mathbf{c}})$.
Therefore, setting $z_{m+1}=x$ and $z_k=q$, for $k\in [n]$, in
Theorems \ref{thm: product formula multivariate} and \ref{thm: ascents MacMahon} leads to the following observation.

\begin{corollary}\label{cor: equidistribution}  For $\mathbf{m}=(m_1,...,m_n)$ and $\mathbf{r}=2\mathbf{1}$, we have
 
         \begin{align*}
\sum_{\pi^{\textbf{c}} \in \S{_\textbf{m}^{2\mathbf{1}}}} x^{\text{asc}(\pi^{\textbf{c}})}q^{\text{famaj}(\pi^{\mathbf{c}})}
=\sum_{\pi^{\textbf{c}} \in \S{_\textbf{m}^{2\mathbf{1}}}}
x^{\text{des}(\pi^{\textbf{c}})}q^{\text{fdmaj}(\pi^{\mathbf{c}})}.
\end{align*}
%        \begin{align*}
%\sum_{\pi^{\textbf{c}} \in \S_{M_\textbf{m}^{2\mathbf{1}}}} x^{\text{asc}(\pi^{\textbf{c}})}q^{2\text{amaj}(\pi^{\mathbf{c}})+c_1(\pi^{\mathbf{c}})}
%=\sum_{\pi^{\textbf{c}} \in \S_{M_\textbf{m}^{2\mathbf{1}}}}
%x^{\text{des}(\pi^{\textbf{c}})}q^{2\text{dmaj}(\pi^{\mathbf{c}})+c_2(\pi^{\mathbf{c}})}.
%\end{align*}
\end{corollary}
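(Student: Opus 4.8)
The plan is to exhibit the claimed equidistribution as a direct consequence of comparing the two multivariate identities of Theorem~\ref{thm: product formula multivariate} and Theorem~\ref{thm: ascents MacMahon} after the specialization $z_{m+1}=x$, $z_k=q$ for all $k\in[n]$, in the case $\mathbf{r}=2\mathbf{1}$. First I would observe that when $r_k=2$ for every $k$, the extra shift $(r_k-2)$ appearing in the right-hand side of~\eqref{eq: ascents MacMahon} vanishes identically, so the right-hand side of Theorem~\ref{thm: ascents MacMahon} becomes $\sum_{t\ge 0}z_{m+1}^t\prod_{k\in[n]}\begin{bmatrix}2t+m_k\\ m_k\end{bmatrix}_{z_k}$, which is exactly the right-hand side of Theorem~\ref{thm: product formula multivariate} in the same case. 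Hence the two left-hand sides are equal as rational functions in the $z_i$; specializing $z_{m+1}=x$, $z_k=q$ gives an identity of rational functions in $x$ and $q$.

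Next I would identify the two specialized left-hand sides with the generating polynomials in the statement. For $\mathbf{r}=2\mathbf{1}$ the denominator $\prod_{i\in[m+1]}\big(1-z_{\pi_i}^{r_{\pi_i}}\cdots z_{\pi_m}^{r_{\pi_m}}z_{m+1}\big)$ does not depend on $\pi^{\mathbf{c}}$ after setting $z_k=q$: each factor becomes $1-q^{2(\text{number of letters weakly to the right of position }i)}x$, i.e.\ $\prod_{i=0}^{m}(1-q^{2i}x)$, so it can be pulled out of the sum over $\pi^{\mathbf{c}}$. For the numerator in~\eqref{first genera}, the monomial $z_{\pi_1}^{c_1-1}\cdots z_{\pi_m}^{c_m-1}$ specializes to $q^{\sum_{i\in[m]}(c_i-1)}=q^{\sum_{j\in[r]}(j-1)c_j(\pi^{\mathbf{c}})}$, and the descent product $\prod_{i-1\in\DES(\pi^{\mathbf{c}})}z_{\pi_i}^{r_{\pi_i}}\cdots z_{\pi_m}^{r_{\pi_m}}z_{m+1}$ specializes to $x^{\des(\pi^{\mathbf{c}})}q^{\sum_{i\in\DES(\pi^{\mathbf{c}})}2(m-i)}=x^{\des(\pi^{\mathbf{c}})}q^{2\,\mathrm{dmaj}(\pi^{\mathbf{c}})}$; multiplying the two contributions and recalling $r=2$ recovers precisely $x^{\des(\pi^{\mathbf{c}})}q^{\mathrm{fdmaj}(\pi^{\mathbf{c}})}$. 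The analogous computation on~\eqref{eq: ascents MacMahon}: $z_{\pi_1}^{r_{\pi_1}-c_1}\cdots z_{\pi_m}^{r_{\pi_m}-c_m}\mapsto q^{\sum_{i\in[m]}(2-c_i)}=q^{\sum_{j\in[r]}(r-j)c_j(\pi^{\mathbf{c}})}$ and the ascent product $\mapsto x^{\asc(\pi^{\mathbf{c}})}q^{2\,\mathrm{amaj}(\pi^{\mathbf{c}})}$, whose product is exactly $x^{\asc(\pi^{\mathbf{c}})}q^{\mathrm{famaj}(\pi^{\mathbf{c}})}$. Since both specialized left-hand sides share the common denominator $\prod_{i=0}^{m}(1-q^{2i}x)$, clearing it yields the equality of the two polynomial numerators, which is the asserted identity.

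The steps are essentially bookkeeping, so there is no deep obstacle; the one point requiring care is the translation between the two descriptions of $\mathrm{fdmaj}$ (and $\mathrm{famaj}$) — namely verifying that $\sum_{i\in[m]}(c_i-1)=\sum_{j\in[r]}(j-1)c_j(\pi^{\mathbf{c}})$ and $\sum_{i\in[m]}(r-c_i)=\sum_{j\in[r]}(r-j)c_j(\pi^{\mathbf{c}})$, which hold by grouping the positions according to their color, together with the identity $\sum_{i\in\DES(\pi^{\mathbf{c}})}(m-i)=\mathrm{dmaj}(\pi^{\mathbf{c}})$ from the definitions preceding Corollary~\ref{cor: fmaj product formula multivariate}. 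One should also double-check that the constraint "only color~$1$ may follow the last bar'' in the descent-barred model versus "up to color $r_k=2$'' in the ascent-barred model is exactly what produces the $(r_k-2)=0$ cancellation here, so that no spurious discrepancy remains. Once these identifications are in place, the corollary follows immediately by equating coefficients after cancelling the common denominator.
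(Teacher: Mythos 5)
Your proposal is correct and follows exactly the paper's route: the paper obtains the corollary by specializing Theorems~\ref{thm: product formula multivariate} and~\ref{thm: ascents MacMahon} at $z_{m+1}=x$, $z_k=q$, noting that for $\mathbf{r}=2\mathbf{1}$ the shift $r_k-2$ vanishes so the two right-hand sides coincide, and identifying the specialized numerators with the $(\des,\mathrm{fdmaj})$ and $(\asc,\mathrm{famaj})$ generating polynomials over the common denominator $\prod_{i=0}^{m}(1-q^{2i}x)$. Your write-up simply makes explicit the bookkeeping that the paper leaves implicit.
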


 {
\begin{rem}
    The equidistribution in Corollary \ref{cor: equidistribution} is also explained bijectively by mapping a signed multiset permutation $\pi_1^{c_1}\cdots\p_m^{c_m}(n+1)^1\in \S{_\textbf{m}^{2\mathbf{1}}}$ to the signed multiset permutation 
    ${\pi_{1}'}^{{c_{1}'}}\cdots{\pi_{m}'}^{{c_{m}'}}(n+1)^{1}$, where 
    $\pi'_i=n+1-\pi_i$ and $c'_i={3-c_i}$ for $i\in [m]$.
    \end{rem}}
 \begin{rem}
For a permutation $\pi^{\mathbf{c}}=\pi_1^{c_1}\cdots\p_m^{c_m}(n+1)^1$, let  $\text{rev}(\pi^{\mathbf{c}}):=(n+1)^1\p_m^{c_m}\cdots\pi_1^{c_1}$ denote the {\em reverse permutation} of $\pi^{\mathbf{c}}$. An ascent (descent) $i\in [m]$ of $\pi^{\mathbf{c}}$ corresponds to a descent (ascent) $m-i$ of $\text{rev}(\pi^{\mathbf{c}})$ and vice versa. If we reverse our permutations, the equidistribution between ascents and descents in Corollary \ref{cor: equidistribution} implies that Theorems  \ref{thm: product formula multivariate} and \ref{thm: ascents MacMahon} generalize a known formula for signed permutations (Theorem 3.7 in \cite{Chow2007OnTD}) and its multiset analog (equation (3.3) in \cite{LZC}) proven using a different coloring order.
 \end{rem}

\subsection{A second multivariate identity}
\label{subsec:identity2}
We now proceed to derive an alternative multivariate extension of the identity~\eqref{macmahon's formula}, which generalizes Lin's equation \eqref{lin's formula}. 
To do so, we introduce the additional variables $y_j$ for $j\in [r]$ where $r:=\text{max}\{r_1,...,r_n\}$. Recall that $c_j(\pi^{\mathbf{c}})$ is the number of $j$-colored entries in $\pi^{\mathbf{c}}$ (excluding $\pi_{m+1}^{1}$).
We then have the following result, where $\lfloor a \rfloor$ denotes the greatest integer less than or equal to $a$  {and the notation $i_{l;k}$ is used to specify that the index corresponds to a chosen number $k$ and a color $l$ from the color set $[r_k]$}.

\begin{theorem}
\label{mcmahon general(2)} 
For positive integral vectors $\mathbf{m}=(m_1,...,m_n)$ and $\mathbf{r}=(r_1,...,r_n)$, we have

    \begin{align}\label{mc mahon generali2}
 & \sum_{\pi^{\textbf{c}} \in \S{_\textbf{m}^{\mathbf{r}}}}\frac{\prod\limits_{p\in [r]}y_p^{c_{p}(\pi^{\textbf{c}})} 
\prod\limits_{\substack{i\in[m+1]\setminus\{1\} \\ i-1 \in \text{DES}(\pi^{\textbf{c}})}}z_{\pi_i^{c_i}}^{r_{\pi_i}}\cdots z_{\pi_m^{c_m}}^{r_{\pi_m}}z_{m+1}}{\prod\limits_{i\in [m+1]}\big(1-z_{\pi_i^{c_i}}^{r_{\pi_i}}\cdots z_{\pi_m^{c_m}}^{r_{\pi_m}}z_{m+1}\big)}
\\ &= \sum_{t\geq 0} z_{m+1}^t 
\prod_{k\in[n]} \bigg(
\sum_{\footnotesize{\substack{i_{r_k;k}\in [m_k]_0 \\
    \color{red}{\vdots}\\
    i_{2;k} \in [m_k-\sum\limits_{l=3}^{r_k}i_{l;k}]_0 }}}
    \Big(
\prod\limits_{l\in [r_k]} \begin{bmatrix}t+\lfloor\frac{1-l}{l} \rfloor +i_{l;k} \\  i_{l;k} \end{bmatrix}_{z_{k^l}^{r_k}}  y_l^{i_{l;k}}\Big)\bigg),
\end{align}
     where $i_{1;k}=m_k-\sum\limits_{l=2}^{r_k}i_{l;k}$ in the sum, for each $k\in [n]$.
\end{theorem}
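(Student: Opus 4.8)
The plan is to mimic the two-way counting argument used for Theorem~\ref{thm: product formula multivariate}, refining the weight on barred permutations so as to track, separately for each color $l\in[r_k]$, how many copies of the element $k$ appear with that color. I would work with the same set $B(M_\textbf{m}^{\mathbf{r}})$ of barred permutations on $\S_{M_\textbf{m}^{\mathbf{r}}}$, but assign to a barred permutation $\sigma$ on $\pi^{\textbf{c}}$ the weight
\[
wt(\sigma) := \prod_{p\in[r]}y_p^{c_p(\pi^{\textbf{c}})}\,\prod_{i\in[m+1]}\big(z_{\pi_i^{c_i}}^{r_{\pi_i}}\cdots z_{\pi_m^{c_m}}^{r_{\pi_m}}z_{m+1}\big)^{d_i},
\]
where $d_i$ is again the number of bars in the $i$-th space. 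Summing over all barred permutations on a fixed $\pi^{\textbf{c}}$ and then over all $\pi^{\textbf{c}}$ reproduces, by the Gessel--Stanley ``one bar per descent space, then a geometric series in each space'' observation, exactly the left-hand side of~\eqref{mc mahon generali2}; this half is essentially identical to the corresponding step in the proof of Theorem~\ref{thm: product formula multivariate} and requires no new idea.

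The substance is in the other direction: fixing the number $t$ of bars and reconstructing each $t$-barred permutation by choosing where to insert the colored copies of each letter. As in the earlier proof, a $t$-barred permutation is built by placing $t$ bars in a row and, for each $k\in[n]$, selecting $m_k$ colored copies of $k$ from $\{k^1,\ldots,k^{r_k}\}$ (with repetition) to distribute among the $t+1$ spaces, keeping entries within a space nondecreasing and forbidding any color other than $1$ in the rightmost space (this last restriction is why the color-$1$ index will carry a shift). The new feature is that I must organize this count by the composition $(i_{1;k},\ldots,i_{r_k;k})$ recording how many of the $m_k$ copies of $k$ receive each color; this is where the inner summation over $i_{r_k;k},\ldots,i_{2;k}$ with $i_{1;k}=m_k-\sum_{l\ge 2}i_{l;k}$ comes from, and the factor $y_l^{i_{l;k}}$ tracks it. Given such a composition, placements of the $i_{l;k}$ copies of color $l$ are independent across $l$, so the count factors as $\prod_{l\in[r_k]}$ of the number of ways to drop $i_{l;k}$ indistinguishable color-$l$ copies into the available spaces. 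For $l\ge 2$ there are $t+1$ available spaces (all but the last one would also be allowed, but the nondecreasing-within-a-space condition together with the fact that a single bar already sits between consecutive spaces means each placement is recorded by a nondecreasing sequence of space-indices in $\{0,\ldots,t\}$ — wait, the rightmost space is forbidden for $l\ge 2$, so the indices run in $\{0,\ldots,t-1\}$, i.e.\ $t$ choices), while for $l=1$ the rightmost space is allowed, giving $t+1$ choices. Using the $q$-binomial identity~\eqref{useful q-binomial identity} with the ``$B$'' parameter equal to $t-1$ for $l\ge 2$ and $t$ for $l=1$, together with the weight contributed by an $l$-colored copy of $k$ placed after the $j$-th bar — which I would verify is $z_{k^l}^{r_k j}$ under the substitution, so that the relevant $q$-variable in the $q$-binomial is $z_{k^l}^{r_k}$ — one obtains exactly
\[
\begin{bmatrix}t+\lfloor\tfrac{1-l}{l}\rfloor+i_{l;k}\\ i_{l;k}\end{bmatrix}_{z_{k^l}^{r_k}},
\]
since $\lfloor(1-l)/l\rfloor=0$ for $l=1$ and $\lfloor(1-l)/l\rfloor=-1$ for $l\ge 2$. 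Multiplying over $l\in[r_k]$, then over $k\in[n]$, attaching the factor $z_{m+1}^t$ from the appended letter $(n+1)^1$, and summing over $t$ yields the right-hand side.

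The main obstacle I anticipate is bookkeeping the ``available spaces'' count correctly — in particular pinning down exactly why color $1$ gets one more admissible space than the other colors (the appended letter $(n+1)^1$ occupying the final position and the requirement of a bar after every descent) and confirming that the weight of an $l$-colored copy of $k$ after the $j$-th bar comes out to the power of $z_{k^l}$ that makes the base of the $q$-binomial equal to $z_{k^l}^{r_k}$ rather than, say, $z_{k^l}$ or $z_{k^l}^{r_k j}$ with an unwanted leftover. Once those two local computations are checked against a small example (e.g.\ the $\mathbf{m}=(2,3)$, $\mathbf{r}=(2,2)$ instance already used in the paper, or better a case with some $r_k\ge 3$ to see the floor function genuinely at work), the rest is the same double-counting template as Theorem~\ref{thm: product formula multivariate}, and specializing all $z_{k^l}\mapsto$ a common value together with $y_l\mapsto$ suitable monomials should be checked to recover Lin's identity~\eqref{lin's formula} as a sanity check.
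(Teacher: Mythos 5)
Your proposal is correct and follows essentially the same route as the paper's own proof: the same refined weight on barred permutations, the same Gessel--Stanley argument for the left-hand side, and the same decomposition of the $t$-barred count by the color composition $(i_{1;k},\ldots,i_{r_k;k})$, with the color-$1$ copies allowed in $t+1$ spaces versus $t$ for the other colors, each $l$-colored copy of $k$ after the $j$-th bar contributing $y_l z_{k^l}^{r_k j}$, and the identity~\eqref{useful q-binomial identity} producing the $q$-binomials with base $z_{k^l}^{r_k}$ and the $\lfloor(1-l)/l\rfloor$ shift. The two local computations you flag as needing verification are exactly the ones the paper carries out, and they come out as you predict.
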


\begin{proof}
The proof is similar to that of Theorem \ref{thm: product formula multivariate} (also based on \cite[Theorem 6]{LZC}).  For a barred permutation $\sigma \in B(\S{_\textbf{m}^{\mathbf{r}}})$ on $\pi^{\mathbf{c}} \in \S{_\textbf{m}^{\mathbf{r}}}$,
where $d_i$ is the number of bars between $\pi_{i-1}^{c_{i-1}}$ and $\pi_i^{c_i}$ for $i=2,...,m+1$ and $d_{1}$ the number of bars to the left of $\pi_1^{c_1}$, let the weight of $\sigma$ be

\[wt(\sigma):=\prod\limits_{p\in [r]}y_p^{c_{p}(\pi^{\mathbf{c}})} 
\prod\limits_{\substack{i\in[m+1] }}\big(z_{\pi_i^{c_i}}^{r_{\pi_i}}\cdots z_{\pi_m^{c_m}}^{r_{\pi_m}}z_{m+1}\big)^{d_{i}}
.\]

For example, for the barred permutation $\sigma= ||1^2|2^12^2|2^2||1^13^1$ on $\pi^{\mathbf{c}}\in  \S{_\textbf{m}^{\mathbf{r}}}$ for $\textbf{m}=(2,3)$ and a vector of positive integers $\mathbf{r}=(r_1,r_2)$,
we compute \[wt(\sigma)=\ 
y_1^2y_2^3(z_{1^2}^{r_1}z_{2^1}^{r_2}z_{2^2}^{2r_2}z_{1^1}^{r_1}z_6)^2(z_{2^1}^{r_2}z_{2^2}^{2r_2}z_{1^1}^{r_1}z_6)(z_{2^2}^{r_2}z_{1^1}^{r_1}z_6)(z_{1^1}^{r_1}z_6)^2\ =\ y_1^2y_2^3z_{1^2}^{2r_1}z_{2^1}^{3r_2}z_{2^2}^{7r_2}z_{1^1}^{6r_1}z_6^6.\] We will now sum over the weights of all barred permutations in $B(\S{_\textbf{m}^{\mathbf{r}}})$ in two ways.

First, we fix a permutation $\pi^{\mathbf{c}} \in \S{_\textbf{m}^{\mathbf{r}}}$ and sum over the set $B(\S{_\textbf{m}^{\mathbf{r}}})(\pi^{\mathbf{c}})$ of barred permutations on $\pi^{\mathbf{c}}$. Similarly to the proof of Theorem \ref{thm: product formula multivariate}, we compute

\begin{align*}
& \sum_{\sigma \in B(\pi^{\mathbf{c}})}wt(\sigma)
 =\frac{\prod\limits_{p\in [r]}y_p^{c_{p}(\pi^{\mathbf{c}})} 
\prod\limits_{\substack{i\in[m+1]\setminus\{1\} \\ i-1 \in DES(\pi^{\mathbf{c}})}}z_{\pi_i^{c_i}}^{r_{\pi_i}}\cdots z_{\pi_m^{c_m}}^{r_{\pi_m}}z_{m+1}}{\prod\limits_{i\in [m+1]}\big(1-z_{\pi_i^{c_i}}^{r_{\pi_i}}\cdots z_{\pi_m^{c_m}}^{r_{\pi_m}}z_{m+1}\big)}.
\end{align*}

Let us now restrict to the set $B_t(\S{_\textbf{m}^{\mathbf{r}}})$ of barred permutations with $t$ bars. We saw in the proof of Theorem \ref{thm: product formula multivariate} that we can construct such permutations
by first placing $t$ bars and then inserting the colored colored copies of $k$ for all $k\in [n]$ in a suitable way.

Inserting $m_k$ elements from $\{k^1,k^2,...,k^{r_k}\}$ (with repetition allowed), is the same as selecting $i_{r_k;k}$ copies of $k^{r_k}$ to insert, $i_{r_k-1;k}$ copies of $k^{r_k-1}$, and so on, up to $i_{2;k}$ copies of $k^{2}$, and lastly $i_{1;k}:=m_k-i_{2;k}-\cdots -i_{r_k;k}$ copies of $k^{1}$.
Each of the $i_{1;k}$ copies of $k^1$ can be placed in the $j$-th space for any $j=0,...,t$, and each copy of $k^2,...,k^{r_k}$ can be placed in the $j$-th space for any $j=0,...,t-1$ ($j=0$ denotes the leftmost space, before the first bar).

Inserting an $l$-colored copy of $k$ in the space right after the $j$-th bar, contributes the factor $y_lz_{k^l}^{r_kj}$ to the weight of $\sigma\in B_t(\S{_\textbf{m}^{\mathbf{r}}})$.
Moreover, the term $\pi_{m+1}^{c_{m+1}}=(n+1)^1$ of $\pi^{\mathbf{c}}$ contributes $z_{m+1}^t$ to the weight. 
Using the $q$-binomial coefficient identity in \eqref{useful q-binomial identity}, 
it follows
that
the weight of a permutation $\sigma \in B_t(\S{_\textbf{m}^{\mathbf{r}}})$ on $\pi^{\mathbf{c}}$
for a fixed choice of $i_{l;k}$ ($1\leq l\leq r_k, 1\leq k \leq n$) is equal to 
\[
z_{m+1}^t\prod\limits_{k\in [n]}  
\Bigg(\Big(  \begin{bmatrix}t +i_{1;k} \\  i_{1;k} \end{bmatrix}_{z_{k^1}^{r_k}}  y_1^{i_{1;k}}\Big)
\prod\limits_{l\in [r_k]\setminus \{1\}} \Big(  \begin{bmatrix}t-1 +i_{l;k} \\  i_{l;k} \end{bmatrix}_{z_{k^l}^{r_k}} y_l^{i_{l;k}}\Big)
\Bigg).
\]

Summing over all possible choices of $i_{l;k}$ gives

\[\sum_{\sigma \in B_t(\S{_\textbf{m}^{\mathbf{r}}})}wt(\sigma)=
z_{m+1}^t 
\prod_{k\in[n]} \bigg(
\sum_{\footnotesize{\substack{i_{r_k;k}\in [m_k]_0 \\
    \cdots\\
    i_{2;k} \in [m_k-\sum\limits_{l=3}^{r_k}i_{l;k}]_0 }}}
    \Big(\begin{bmatrix}t +i_{1;k} \\  i_{1;k} \end{bmatrix}_{z_{k^1}^{r_k}} y_1^{i_{1;k}}
\prod\limits_{l\in [r_k]\setminus\{1\}} \begin{bmatrix}t-1 +i_{l;k} \\  i_{l;k} \end{bmatrix}_{z_{k^l}^{r_k}}  y_l^{i_{l;k}}\Big)\bigg),\]
 where $i_{1;k}=m_k-\sum\limits_{l=2}^{r_k}i_{l;k}$ in the sum, for each $k\in [n]$. The proof follows by summing over all possible values of $t$, and using that $\lfloor \frac{1-a}{a}\rfloor=0$ for $a=1$ and $\lfloor \frac{1-a}{a}\rfloor=-1$ for $a>1$.
\end{proof}

Setting $z_{m+1}=x$, $y_l=1$, $z_{k^l}=1$ and $r_k=1$ for all $l \in [r_k], k\in [n]$ in the identity in Theorem \ref{mcmahon general(2)} recovers \eqref{macmahon's formula}. 
Similarly, equation \eqref{lin's formula} is recovered by setting $z_{m+1}=x$, $y_l=1$, $z_{k^l}=1$ and $r_k=2$ for all $l \in [r_k], k\in [n]$. 
More generally, by Corollary \ref{cor: equidistribution}, Theorem \ref{mcmahon general(2)} provides a generalization of equation (3.2) in \cite{LZC}.
We record the following specialization, arising for $r_k=r$, $z_{m+1}=x$, $y_l=q^{l-1}$ and $z_{k^l}=q$ and for all $l \in [r], k\in [n]$.

\begin{corollary}\label{cor: second identity q-analog}
For positive integral vectors $\mathbf{m}=(m_1,...,m_n)$ and $\mathbf{r}=(r_1,...,r_n)$, we have

    \begin{align}\label{eq: second identity corollary }
 & \frac{ \sum\limits_{\pi^{\textbf{c}} \in \S{_\textbf{m}^{\mathbf{r}}}}x^{\text{des}(\pi^{\mathbf{c}})} q^{\text{fdmaj}(\pi^{\mathbf{c}})}}{\prod\limits_{i=0}^{m}\big(1-q^{ri}x\big)}
 = \sum_{t\geq 0} x^t 
\prod_{k\in[n]} \bigg(
\sum_{\footnotesize{\substack{i_{r;k}\in [m_k]_0 \\
    \cdots\\
    i_{2;k} \in [m_k-\sum\limits_{l=3}^{r}i_{l}]_0 }}}
    \Big(
\prod\limits_{p\in [r]} \begin{bmatrix}t+\lfloor \frac{1-p}{p} \rfloor +i_{p;k} \\  i_{p;k} \end{bmatrix}_{q^{r}}  q^{(p-1)i_{p;k}}\Big)\bigg),
\end{align}
     where $i_{1;k}=m_k-\sum\limits_{l=2}^{r_k}i_{l;k}$ in the sum, for each $k\in [n]$.
\end{corollary}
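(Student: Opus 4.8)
The plan is to obtain Corollary~\ref{cor: second identity q-analog} as a direct specialization of Theorem~\ref{mcmahon general(2)}, exactly as the surrounding text suggests, so the only real work is bookkeeping: tracking how each variable substitution propagates through both sides of~\eqref{mc mahon generali2}. First I would record the substitutions to be made, namely $r_k = r$ for all $k\in[n]$ (so that $\mathbf{r} = r\mathbf{1}$ and $\max\{r_1,\dots,r_n\}=r$), $z_{m+1} = x$, $z_{k^l} = q$ for all $k\in[n]$ and $l\in[r]$, and $y_l = q^{\,l-1}$ for all $l\in[r]$.

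For the right-hand side this is immediate: each factor $\begin{bmatrix} t+\lfloor\frac{1-l}{l}\rfloor + i_{l;k} \\ i_{l;k}\end{bmatrix}_{z_{k^l}^{r_k}}$ becomes $\begin{bmatrix} t+\lfloor\frac{1-l}{l}\rfloor + i_{l;k} \\ i_{l;k}\end{bmatrix}_{q^{r}}$ since $z_{k^l}^{r_k} = q^r$, and each factor $y_l^{i_{l;k}}$ becomes $q^{(l-1)i_{l;k}}$; the index set of the inner sum is unchanged. Renaming the summation letter $l$ to $p$ inside the innermost product gives precisely the right-hand side of~\eqref{eq: second identity corollary }. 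The only point requiring a sentence of justification is that the constraint $i_{1;k} = m_k - \sum_{l=2}^{r_k} i_{l;k}$ is preserved verbatim, which it is, since no $i$-variable is specialized.

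For the left-hand side I would argue in two steps. The denominator: under $z_{k^l}=q$, $z_{m+1}=x$, $r_k=r$, the factor indexed by $i\in[m+1]$, namely $1 - z_{\pi_i^{c_i}}^{r_{\pi_i}}\cdots z_{\pi_m^{c_m}}^{r_{\pi_m}} z_{m+1}$, collapses to $1 - q^{r(m-i+1)}x$; letting $i$ run over $\{1,\dots,m+1\}$ sweeps the exponent $r(m-i+1)$ over $\{0, r, 2r, \dots, mr\}$, giving $\prod_{i=0}^{m}(1 - q^{ri}x)$, matching the denominator of~\eqref{eq: second identity corollary }. The numerator: the descent-product $\prod_{i-1\in\mathrm{DES}(\pi^{\mathbf{c}})} z_{\pi_i^{c_i}}^{r_{\pi_i}}\cdots z_{\pi_m^{c_m}}^{r_{\pi_m}} z_{m+1}$ becomes $\prod_{i-1\in\mathrm{DES}(\pi^{\mathbf{c}})} q^{r(m-i+1)}x$. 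Here I would note that the exponent of $x$ is the number of descent positions, i.e. $\mathrm{des}(\pi^{\mathbf{c}})$, while the exponent of $q$ is $\sum_{j\in\mathrm{DES}(\pi^{\mathbf{c}})} r(m-j) = r\cdot\mathrm{dmaj}(\pi^{\mathbf{c}})$ after reindexing $i-1 = j$. The color-variable part $\prod_{p\in[r]} y_p^{c_p(\pi^{\mathbf{c}})}$ becomes $\prod_{p\in[r]} q^{(p-1)c_p(\pi^{\mathbf{c}})} = q^{\sum_{p\in[r]}(p-1)c_p(\pi^{\mathbf{c}})}$. Combining the two $q$-exponents gives $r\cdot\mathrm{dmaj}(\pi^{\mathbf{c}}) + \sum_{p\in[r]}(p-1)c_p(\pi^{\mathbf{c}})$, which is exactly $\mathrm{fdmaj}(\pi^{\mathbf{c}})$ by the identity $\mathrm{fdmaj}(\pi^{\mathbf{c}}) = r\,\mathrm{dmaj}(\pi^{\mathbf{c}}) + \sum_{j\in[r]}(j-1)c_j(\pi^{\mathbf{c}})$ recorded just before Corollary~\ref{cor: equidistribution}. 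Hence the numerator becomes $\sum_{\pi^{\mathbf{c}}\in\S_{M_\mathbf{m}^{\mathbf{r}}}} x^{\mathrm{des}(\pi^{\mathbf{c}})} q^{\mathrm{fdmaj}(\pi^{\mathbf{c}})}$, completing the match.

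There is no serious obstacle here; the statement is a routine substitution into Theorem~\ref{mcmahon general(2)}. The one place to be slightly careful — and the step I would write out most explicitly — is the translation of the specialized $q$-monomial in the numerator into the statistic $\mathrm{fdmaj}$: one must correctly match $\sum_{i-1\in\mathrm{DES}}r(m-i+1)$ against $r\sum_{i\in\mathrm{DES}} (m-i)$ (the same after shifting the summation index, since $i-1$ ranging over $\mathrm{DES}$ means $i$ ranges over $\{j+1: j\in\mathrm{DES}\}$ and the summand $r(m-i+1) = r(m-j)$), and then invoke the definition of $\mathrm{fdmaj}$ via the color-count form. I would present the proof as: "Apply Theorem~\ref{mcmahon general(2)} with $r_k = r$, $z_{m+1}=x$, $z_{k^l}=q$, and $y_l = q^{l-1}$; the right-hand side is immediate, and on the left-hand side the denominator telescopes to $\prod_{i=0}^m(1-q^{ri}x)$ while the numerator's $q$-weight equals $\mathrm{fdmaj}(\pi^{\mathbf{c}})$ by the color-count formula for $\mathrm{fdmaj}$," followed by the one-line index computation above.
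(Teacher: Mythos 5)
Your proposal is correct and takes exactly the paper's route: the paper proves the corollary by the same one-line specialization of Theorem~\ref{mcmahon general(2)} (namely $r_k=r$, $z_{m+1}=x$, $z_{k^l}=q$, $y_l=q^{l-1}$), and your bookkeeping of the denominator $\prod_{i=0}^m(1-q^{ri}x)$, the identification of the numerator's $q$-weight with $\mathrm{fdmaj}$ via $r\cdot\mathrm{dmaj}+\sum_{p}(p-1)c_p$, and the substitution on the right-hand side all check out. Note only that, like the paper's own one-line justification, your argument treats the case $\mathbf{r}=r\mathbf{1}$, while the corollary as stated nominally allows arbitrary $\mathbf{r}$; this discrepancy originates in the paper rather than in your proof.
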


Note that the left-hand-side in \eqref{eq: second identity corollary } is equal to the left-hand-side in \eqref{first genera.}. However, \eqref{eq: second identity corollary } is more general in the sense that all positive color vectors $\mathbf{r}=(r_1,...,r_n)$ are permitted.

\subsection{A third multivariate identity.}

In this subsection, we study a different generalization of MacMahon's identity, which recovers an identity due to Carlitz, and some of its multivariate analogs studied by Beck and Braun in the case of colored permutations and by Lin in the case of signed multipermutations.

\begin{definition}[\cite{eulermah}]
For a colored multiset permutation $\pi^{\mathbf{c}}=\pi_1^{c_1}...\pi_{m}^{c_m}(n+1)^1 \in \S{_\textbf{m}^{\mathbf{r}}}$, we define
\[a_j({\pi^{\mathbf{c}}})=(c_j-c_{j+1})\textrm{ mod }r\]
to be the {\em $j$-th color change} of $\pi^{\mathbf{c}}$, $j=1,...,m$.
\end{definition}

In \cite{LZC}, Lin introduced the notion of flag barred permutations for signed multiset permutations to generalize combinatorial identities such as those by Beck and Braun in \cite{eulermah}. We extend this approach to colored multiset permutations. Recall that $r:=\text{max}\{r_1,...,r_n\}$.

\begin{definition}\label{flag stuff} 
For $\pi^{\mathbf{c}}=\pi_1^{c_1}\cdots \pi_m^{c_m} (n+1)^1\in \S{_\textbf{m}^{\mathbf{r}}}$, a {\em flag barred permutation} $\sigma$ on $\pi^{\mathbf{c}}$ is obtained by inserting bars between the entries of $\pi^{\mathbf{c}}$ such that
\begin{itemize}
    \item[i)]  every descent space $j$ of $\pi^{\mathbf{c}}$ with $a_j(\pi^{\mathbf{c}})=0$, $j=1,...,m$, receives at least $r$ bars,
    \item[ii)] the number of bars inserted between the $j$-th and the $(j+1)$-th entry of $\pi^{\mathbf{c}}$ is equivalent to $a_j(\pi^{\mathbf{c}})$ modulo $r$,
    \item[iii)] any number of bars is inserted in the $0$-th space to the left of $\pi_1^{c_1}$.
\end{itemize}
    We denote by $ F(\S{_\textbf{m}^{\mathbf{r}}})$ the set of flag barred permutations on $\S{_\textbf{m}^{\mathbf{r}}}$.    By definition, each decent space of $\pi^{\mathbf{c}}$ will receive at least one bar, and hence $ F(\S{_\textbf{m}^{\mathbf{r}}})\subset  B(\S{_\textbf{m}^{\mathbf{r}}})$.
\end{definition}

For example, $|1^1||1^22^2|2^1||1^3||3^1$  is a barred permutation on $\p^{\mathbf{c}}=1^11^22^22^11^33^1 \in \S{_{\textbf{m}}^{\textbf{r}}}$ with $\textbf{m}=\textbf{r}=(3,2)$ that is not flag, whilst $|1^1||1^22^2|2^1||||1^3||3^1$ is flag.

The following theorem generalizes \cite[Theorem 19]{LZC} and \cite[Theorem 15.5]{eulermah}. 
\begin{theorem}
\label{latest generalization} 
For positive integral vectors $\mathbf{m}=(m_1,...,m_n)$ and $\mathbf{r}=r\mathbf{1}$, $r\geq 1$, the following identity holds:
\begin{equation}\label{eq: 2second product formula}
\sum\limits_{\pi^{\textbf{c}} \in \S{_\textbf{m}^{\mathbf{r}}}}\frac{
\prod\limits_{\substack{i\in[m] }}(z_0z_{\pi_1}\cdots z_{\pi_i})^{a_i(\pi^{\mathbf{c}})} \prod\limits_{\substack{a_i(\pi^{\mathbf{c}})=0\\ i \in \text{Des}(\pi^{\mathbf{c}})}}(z_0z_{\pi_1}\cdots z_{\pi_i})^r }
{(1-z_0)  \prod\limits_{i\in[m] }\big(1-(z_0z_{\pi_1}\cdots z_{\pi_i})^{r}\big)}=\sum\limits_{t\geq 0} z_0^t
\prod\limits_{k\in [n]}\begin{bmatrix}t+m_k \\ m_k\end{bmatrix}_{z_{k}}.\end{equation}
\end{theorem}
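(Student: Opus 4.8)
The plan is to mimic the double-counting argument used for Theorem~\ref{thm: product formula multivariate} and Theorem~\ref{mcmahon general(2)}, but now summing a suitable weight over the set $F(M_\textbf{m}^{\mathbf{r}})$ of \emph{flag} barred permutations rather than over all of $B(M_\textbf{m}^{\mathbf{r}})$. For a flag barred permutation $\sigma$ on $\pi^{\mathbf{c}}=\pi_1^{c_1}\cdots\pi_m^{c_m}(n+1)^1$, with $d_i$ bars in the $i$-th space (for $i=0,1,\dots,m$, where $d_0$ counts bars to the left of $\pi_1^{c_1}$), I would assign the weight
\[
wt(\sigma):=\prod_{i\in[m]}\bigl(z_0z_{\pi_1}\cdots z_{\pi_i}\bigr)^{d_i}\cdot z_0^{\,d_0},
\]
so that the left-hand factor $z_0$ tracks the total bar count and each $z_k$ tracks the total number of bars appearing to the right of some fixed occurrence of a letter $k$. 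The proof then consists of evaluating $\sum_{\sigma\in F(M_\textbf{m}^{\mathbf{r}})}wt(\sigma)$ in two ways.

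First I would fix $\pi^{\mathbf{c}}$ and sum over all flag barred permutations on it. Condition (ii) of Definition~\ref{flag stuff} forces $d_i\equiv a_i(\pi^{\mathbf{c}})\bmod r$ in each internal space, and condition (i) additionally forces $d_i\ge r$ (hence $d_i\ge r$ with $d_i\equiv 0$) in each descent space with $a_i(\pi^{\mathbf{c}})=0$; condition (iii) leaves $d_0$ free. So in the $i$-th internal space one writes $d_i=a_i(\pi^{\mathbf{c}})+r j_i$ with $j_i\ge 0$ in general, but $j_i\ge 1$ when $i$ is a descent with $a_i(\pi^{\mathbf{c}})=0$. Summing the geometric series in each space and over $d_0\ge 0$ yields exactly the numerator $\prod_{i\in[m]}(z_0z_{\pi_1}\cdots z_{\pi_i})^{a_i(\pi^{\mathbf{c}})}\prod_{a_i(\pi^{\mathbf{c}})=0,\,i\in\mathrm{Des}(\pi^{\mathbf{c}})}(z_0z_{\pi_1}\cdots z_{\pi_i})^r$ over the denominator $(1-z_0)\prod_{i\in[m]}(1-(z_0z_{\pi_1}\cdots z_{\pi_i})^r)$, matching the summand on the left-hand side of~\eqref{eq: 2second product formula}. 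Summing over all $\pi^{\mathbf{c}}\in\S_{M_\textbf{m}^{\mathbf{r}}}$ gives the left-hand side.

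Second I would group $F(M_\textbf{m}^{\mathbf{r}})$ by total bar count $t$ and reconstruct each flag barred permutation with $t$ bars by placing $t$ bars in a row and then inserting colored copies of each letter into the $t+1$ spaces. The key point, following Lin's treatment of flag barred permutations, is that the flag/modularity conditions make the color of each inserted copy \emph{determined} by its position relative to the bars: once we record, for each $k$, the multiset of bar-positions $\{0\le j\le t\}$ into which the $m_k$ copies of $k$ are inserted (in non-decreasing order, to avoid creating forbidden descents), the colors $c$ are forced by the requirement that between consecutive copies the number of bars is $\equiv(c-c')\bmod r$ and that a descent with equal colors gets a multiple of $r$ that is at least $r$. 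Hence the color data disappears and the count of ways to insert the $m_k$ copies of $k$ is simply the number of non-decreasing sequences $0\le g_1\le\cdots\le g_{m_k}\le t$, weighted by $z_0^{g_1+\cdots}$ appropriately; applying the $q$-binomial identity~\eqref{useful q-binomial identity} with $A=m_k$, $B=t$, $q=z_k$ (and collecting the $z_0^t$ from the attached $(n+1)^1$) gives $z_0^t\prod_{k\in[n]}\left[\begin{smallmatrix}t+m_k\\ m_k\end{smallmatrix}\right]_{z_k}$. Summing over $t\ge 0$ yields the right-hand side and completes the proof.

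The main obstacle is the bookkeeping in the second count: verifying precisely that the flag conditions (i)--(iii) set up a bijection between flag barred permutations on $\S_{M_\textbf{m}^{\mathbf{r}}}$ with $t$ bars and the data of $n$ non-decreasing sequences in $[t]_0$ (so that the color information is genuinely redundant), and checking that the exponent of each $z_k$ accumulated by the weight $wt(\sigma)$ matches $g_1+\cdots+g_{m_k}$ rather than some shifted quantity. This is exactly the step where Lin's argument for signed multiset permutations (\cite[Theorem 19]{LZC}) and Beck--Braun's original argument (\cite[Theorem 15.5]{eulermah}) must be checked to carry over verbatim to general $r\mathbf{1}$; once the bijection is pinned down, the geometric-series manipulation in the first count and the $q$-binomial evaluation in the second are routine.
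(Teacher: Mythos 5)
Your proposal is correct and follows essentially the same route as the paper's proof: the same weight on flag barred permutations (including the $z_0^{d_0}$ factor for the free $0$-th space), the same geometric-series evaluation per fixed $\pi^{\mathbf{c}}$ using conditions (i)--(iii), and the same count of flag $t$-barred permutations via position-determined colors and the $q$-binomial identity~\eqref{useful q-binomial identity}. The one step you flag as needing verification --- that the flag conditions force the colors so that a $t$-barred flag permutation is equivalent to $n$ non-decreasing insertion sequences, with each copy of $k$ contributing $z_k$ to the power of the number of bars to its right --- is precisely the bijection the paper checks in detail, and it goes through as you anticipate.
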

\begin{proof}
 
For a flag barred permutation $\sigma \in F(\S{_\textbf{m}^{\mathbf{r}}})$ on $\pi^{\mathbf{c}}$, we let $b_i (=d_{i+1})$ be the number of bars in the $i$-th space, between $\pi_i^{c_i}$ and $\pi_{i+1}^{c_{i+1}}$ $(i=1,...,m)$ (where $b_0$ counts the bars left of $\pi_1^{c_1}$), and define the weight of $\sigma$ as
\[wt_F(\sigma)=\prod\limits_{i\in [m]}(z_0z_{\pi_1}\cdots z_{\pi_i})^{b_i}. \]
For example for $\sigma =|1^1||1^22^2|2^1||||1^3||3^1$, $wt_F(\sigma)=z_0^{10}z_{1}^{18}z_{2}^{13}$. 
We will sum over the weights of flag barred permutations in $F(\S{_\textbf{m}^{\mathbf{r}}})$ in two ways.

First, we fix a permutation $\pi^{\mathbf{c}} \in \S{_\textbf{m}^{\mathbf{r}}}$. A flag barred permutation $\sigma$ on $\pi^{\mathbf{c}}$ can be obtained by inserting $a_j(\pi^{\mathbf{c}})$ bars in each position $j\in [m]$ $-$ and $r$ bars in case  $a_j(\pi^{\mathbf{c}})=0$ and $j$ is 
a descent $-$
and then inserting any number of $r$-ples of bars in each of the $m$ spaces $1,...,m$. Lastly, any number of bars can be placed in the $0$-th space. All flag barred permutations on $\pi^{\mathbf{c}}$ arise (uniquely) in this way. Therefore, summing over the weights of all possible flag barred permutations $\sigma$ of $\pi^{\mathbf{c}}$ yields
\begin{align*} &
\sum_{\sigma \in F(\pi^{\mathbf{c}})}wt_F(\sigma)
 =\frac{\prod\limits_{\substack{i\in[m] }}(z_0z_{\pi_1}\cdots z_{\pi_i})^{a_i(\pi^{\mathbf{c}})} \prod\limits_{\substack{a_i(\pi^{\mathbf{c}})=0\\ i \in \text{Des}(\pi^{\mathbf{c}})}}(z_0z_{\pi_1}\cdots z_{\pi_i})^r }
{(1-z_0)  \prod\limits_{i\in[m] }\big(1-(z_0z_{\pi_1}\cdots z_{\pi_i})^{r}\big)}.
\end{align*}

Now, let us instead partition $F(\S{_\textbf{m}^{\mathbf{r}}})$ into sets of flag barred permutations with exactly $t$ bars, denoted by $F_t(M_\textbf{m}^{\mathbf{r}})$, $t\geq 0$. 
Each flag $t$-barred permutation $\sigma \in F_t(M_\textbf{m}^{\mathbf{r}})$ can be constructed by inserting $m_k$ copies of $k$, for $k\in [n]$, to the $t+1$ spaces between adjacent bars, including the spaces on the left and right side, that we then color in accordance with the definition of flag barred permutations. 
Specifically, any element placed at the right side of the rightmost bar is chosen to be $1$-colored, since we have fixed $\pi_{m+1}^{c_{m+1}}=(n+1)^1$. 
Similarly, all entries between the $(t-j)$-th and the $(t+1-j)$-th bar (counting from left to right) receive the color $(j \text{ mod }r)+1$, for $1\leq j \leq t$ (here $j=t$ refers to the space before the 1st bar). 
The elements between bars are again ordered in weakly increasing order so as to not create any descents between bars.  The color of $\pi_i^{c_i}$ is $c_i$, where $c_i-1$ is equivalent modulo $r$ to the number of bars to the right of $\pi_i$. 
We denote the number of bars to the right of $\pi_i$ as $j_{i}$, for $0\leq j_i\leq t$. 
For two consecutive entries $\pi_i^{c_i}, \pi_{i+1}^{c_{i+1}}$ of $\pi^{\mathbf{c}}$,  we compute $a_i({\pi}^{\mathbf{c}})=(c_i-c_{i+1})\text{ mod }r\equiv (j_i-j_{i+1})\text{ mod }r$, which means that the barred permutation $\sigma$ constructed satisfies condition ii) of Definition \ref{flag stuff}. 
Moreover, the entries of $\sigma$ between bars are in weakly increasing order, respecting condition i) of Definition \ref{flag stuff}. 
Hence, $\sigma \in F_t(M_\textbf{m}^{\mathbf{r}})$.

To show that every flag barred permutation arises through this construction, let us start with a flag barred permutation $\sigma \in F_t(M_\textbf{m}^{\mathbf{r}})$ on $\pi^{\mathbf{c}} \in \S{_\textbf{m}^{\mathbf{r}}}$.
By definition, since $\pi_{m+1}^{c_{m+1}}=(n+1)^1$, it follows that $a_m(\pi^{\mathbf{c}})\equiv (c_m-1) \text{ mod }r $.
Hence the number of bars to the right of $\pi_m^{c_m}$ is  $b_{m}=c_m-1+rs$, for some $s\geq 0$ such that $c_m-1+rs\leq t$. 
Similarly, since $\sigma$ is flag, the number of bars in the space between $\pi_{m-1}^{c_{m-1}}$ and $\pi_{m}^{c_m}$ is $b_{m-1}=a_{m-1}(\pi^{\textbf{c}})+s'r $, for some $s'\geq 0$. 
Hence, the total number of bars that are located to the right of $\pi_{m-1}^{c_{m-1}}$ modulo $r$ is $a_{m-1}(\pi^{\textbf{c}})+a_m(\pi^{\textbf{c}}) %\equiv l_m-1+(l_{m-1}-l_m) 
\equiv (c_{m-1}-1 )\text{ mod } r$. 
In general, the total number of bars to the right of $\pi_{i}^{c_i}$, for $i\in [m]$, is $b_i+\cdots+b_{m}\equiv (c_{i}-1) \text{ mod }r$ hence $c_i\equiv( b_i+\cdots+b_{m}+1) \text{ mod }r$.
Since $1\leq c_i\leq r$, it follows that $c_i=b+1$, where $b_i+\cdots+b_{m}\equiv b \text{ mod }r$.
It follows that the construction described above will give rise to $\sigma$ since we saw that the color $c_i$ of $\pi_i^{c_i}$ respects the construction restrictions.

Let us now count the number of flag barred permutations in $F_t(M_\textbf{m}^{\mathbf{r}})$ for a fixed number of bars $t$. Starting with $t$ bars, we can construct a flag barred permutation by adding $m_k$ colored copies of $k$, in any order, with the restriction that $l$-colored copies of $k$ are placed in the space after the $(t+1-l-sr)$-th bar (counting from left to right) for some $s\geq 0$ with $t+1-l-sr \geq 0$. 
For each $m_k$-tuple of colored copies of $k$ that we place in the spaces between the $t$ bars, there are $t+1$ choices for the space (which also determines color) of each copy of $k$. Since  the $t$ bars contribute $z_0^t$ to the weight, and each copy of $k$ inserted in the space after the  $(t-j)$-th bar contributes the term
$z_{k}^{j}$,
by \eqref{useful q-binomial identity}, the weight of  $\sigma \in F_t(M_\textbf{m}^{\mathbf{r}})$ is

\[wt_F(\sigma)= z_0^t
\prod\limits_{k\in [n]}   \begin{bmatrix} t +m_k \\ m_k\end{bmatrix}_{z_{k}}.\] 
Summing over all values of $t$ proves the second equality.
\end{proof}

As mentioned, Theorem \ref{latest generalization} provides a colored analog of equation (5.10) in \cite{LZC}, which is itself a generalization of an identity due to Foata and Han - equation (7.3) in \cite{Foata2005} as noted in \cite{LZC}. Lin's equation  (5.10) arises for $r=2$.
The specialization $r=1, m_k=1$ and $z_{\pi_i}=q$ for $k\in [n], i \in [m]$, recovers a formula due to Carlitz \cite{carlitzidentity} (Theorem 1.3 in \cite{eulermah}). It is also worth noting that Bagno and Biagioli \cite{bagnobiagioli} (Theorem 5.2 in \cite{eulermah}) established a colored analog of Carlitz's formula, which is recovered from equation \eqref{eq: 2second product formula} by setting
$m_k=1$ and $z_{\pi_i}=q$ for $k\in [n], i \in [m]$. In \cite{eulermah},
 Beck and Braun provided a multivariate generalization \cite[Theorem 4.1]{eulermah} of Carlitz's formula, which they also generalized to colored permutations \cite[Theorem 5.15]{eulermah}. The latter theorem arises from equation \eqref{eq: 2second product formula} by setting $m_k=1$ for $k\in [n]$ and \cite[Theorem 4.1]{eulermah} by further specializing $r=1$\footnote{We note that the definition of descents used in \cite[Theorem 5.15]{eulermah} coincides with our definition of descents in case there is no color change.}.

\begin{rem}
 An interesting phenomenon regarding equation \eqref{eq: 2second product formula}  is that the value on the right handside  does not depend on the value of $r$.
\end{rem}

 While Theorem \ref{latest generalization} is stated for $\mathbf{r}=r\mathbf{1}$, one can derive a formula for general $\mathbf{r}$. We do not present it here because it is rather technical.

\section{Distributional properties}
\label{sec:distributionalproperties}

In this section, we describe the distributional properties of the colored multiset Eulerian polynomials. 
In Subsection~\ref{subsec:symmetric}, we characterize when the colored-multiset Eulerian polynomial $A_{\textbf{m}}^{\textbf{r}}$ is symmetric. 
In Subsection~\ref{subsec:self-interlacing}, we show that $A_{\textbf{m}}^{\textbf{r}}$ is self-interlacing whenever $r_i \geq m_i + 1$ for all $i$, generalizing the results of \cite{PS} for colored Eulerian polynomials. It follows that these polynomials satisfy several well-studied distributional properties.

\subsection{Preliminaries}
\label{sec: preliminaries}
We recall the definitions of the distributional properties of interest and their basic properties.
For a more detailed discussion of these properties and their significance in combinatorics, we recommend the survey article \cite{PB}. 

A polynomial $p=p_0+p_1x+\cdots+p_dx^d$ of degree $d$ is \emph{unimodal} if its coefficients satisfy $p_0\leq \cdots \leq p_s\geq \cdots \geq p_d$ for some $s\in \{0,...,d\}$. %; $s$ is then a {\em mode} of $p(x)$. 
It is \emph{log-concave} if $p_i^2 \geq p_{i-1}p_{i+1}$ for all $i = 1,\ldots, d$. 
It is well-known that a log-concave polynomial with no internal zeros is also unimodal. 

The polynomial $p$ is called {\em symmetric} with respect to degree $n$ if $p_s=p_{n-s}$ for all $s=0,...,n$. 
The linear space of polynomials that are symmetric with respect to degree $n$ can be expressed in the basis
\[
\{x^i(x + 1)^{n - 2i} : 0 \leq i \leq \lfloor n/2\rfloor\},
\]
which is commonly referred to as the \emph{$\gamma$-basis}.  
If a polynomial has nonnegative coefficients in the $\gamma$-basis it is called \emph{$\gamma$-positive} (or, more accurately, \emph{$\gamma$-nonnegative}). 
If $p$ is $\gamma$-positive it follows that $p$ is unimodal. 

The polynomial $p$ is said to be {\em real-rooted} if $p$ is a constant polynomial or all of its zeros are real numbers. 
When the coefficients of $p$ are nonnegative, it follows that $p$ is log-concave.  
Moreover, when $p$ is real-rooted and symmetric, $p$ is also $\gamma$-positive.  
Hence, a proof that $p$ with only positive coefficients is real-rooted and symmetric is a proof that $p$ satisfies all of the above distributional properties. 

The following distributional property is currently a focal point in algebraic combinatorics and discrete geometry:
The polynomial $p$ is {\em alternatingly increasing} if  $p_0\leq p_d \leq p_1 \leq p_{d-1}\leq \cdots \leq p_{\lfloor \frac{d+1}{2}\rfloor}$. 
If $p$ is alternatingly increasing, it is also unimodal. 
This property is of interest in algebraic combinatorics since the following characterization makes proofs of the alternatingly increasing property useful when studying the unimodality of Hilbert polynomials of graded rings expressed in the multinomial basis. 

Given $n\geq d$, it can be shown that there exist unique polynomials $a,b\in\mathbb{R}[x]$ such that
(1) $p = a + xb$, (2) $\deg(a) \leq n$, (3) $\deg(b)\leq n-1$, (4) $a$ is symmetric with respect to $n$, and (5) $b$ is symmetric with respect to $n-1$.
The pair $(a,b)$ is called the \emph{symmetric decomposition} of $p$ with respect to $n$, or the \emph{$\mathcal{I}_n$-decomposition} of $p$. 
A basic observation is that $p$ is alternatingly increasing if and only if both $a$ and $b$ have only nonnegative coefficients and are unimodal.
It is therefore of interest to investigate the distributional properties of the polynomials in a symmetric decomposition. 
We say that $p$ has a nonnegative, real-rooted, unimodal, or log-concave symmetric decomposition whenever both $a$ and $b$ fulfill the specified condition. 
When $a$ and $b$ are both $\gamma$-positive $p$ is said to be \emph{bi-$\gamma$-positive}.

In \cite{PS} it was shown that a sufficient condition for $p$ to have a real-rooted symmetric decomposition (when the decomposition is nonnegative) is for $p$ to be \emph{interlaced} by its own reciprocal.  
Given two real-rooted polynomials $p$ and $q$ with respective zeros $\alpha_1\geq \alpha_2\geq \cdots$ and $\beta_1\geq \beta_2\geq \cdots$, we say that $p$ is \emph{interlaced} by $q$, denoted $q\preceq p$ if
\[
\alpha_1\geq \beta_1\geq \alpha_2\geq \beta_2\geq \cdots.
\]
When these inequalities are strict, we instead write $q\prec p$.
The reciprocal of the polynomial $p$ (with respect to degree $d$) is $\mathcal{I}_d(p) = x^dp(1/x)$. 
A polynomial $p$ is \emph{self-interlacing} (with respect to $d$) if $\mathcal{I}_d(p) \preceq p$. 
Note that self-interlacing polynomials are a natural generalization of symmetric, real-rooted polynomials.

In \cite{PS}, it is shown that a polynomial $p$ of degree at most $d$ with $\mathcal{I}_d$-decomposition $(a,b)$ having only nonnegative coefficients is (strictly) self-interlacing with respect to $d$ if and only if $b\prec a$; specifically, we have the following theorem:
\begin{theorem}
    \cite[Theorem 2.6]{PS}
    \label{thm:BL}
    Let $p$ be a univariate polynomial of degree at most $d$ with nonnegative $\mathcal{I}_d$-decomposition $(a,b)$. The following are equivalent:
    \begin{enumerate}
        \item $b \prec a$,
        \item $b \prec p$,
        \item $a \prec p$,
        \item $\mathcal{I}_d(p) \prec p$, and
        \item $\mathcal{R}_d(p) \prec p$, where $\mathcal{R}_d(p) = (-1)^dp(-1 - x)$.
    \end{enumerate}
\end{theorem}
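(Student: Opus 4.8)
The plan is to treat this as an exercise in the calculus of interlacing real‑rooted polynomials, with one structural identity carrying most of the weight. Writing $p = a + xb$ for the $\mathcal{I}_d$‑decomposition, I would first record the defining symmetries $\mathcal{I}_d(a) = a$ and $\mathcal{I}_d(xb) = \mathcal{I}_{d-1}(b) = b$, so that applying $\mathcal{I}_d$ to $p = a+xb$ yields the key identity $\mathcal{I}_d(p) = a + b$; since $\mathcal{I}_d$ is an involution on polynomials of degree at most $d$, it follows also that $\mathcal{I}_d(a+b) = p$ and $\mathcal{I}_d(b) = xb$. Thus the five conditions are assertions about the polynomials $p = a+xb$, $\mathcal{I}_d(p) = a+b$, $a$, $b$, and (for (5)) the affine reflection $\mathcal{R}_d(p)(x) = (-1)^d p(-1-x)$. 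I would also note that when $p$ has nonnegative coefficients so does $\mathcal{I}_d(p)$, whence every real root of $p$ and of $\mathcal{I}_d(p)$ lies in $(-\infty,0]$; since "interlacing" is a priori a symmetric relation, this root‑location information is exactly what pins down whether $q \preceq p$ or $p \preceq q$.

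For the block $(1)\Leftrightarrow(2)\Leftrightarrow(3)\Leftrightarrow(4)$ the only tools needed are: the Obreschkoff‑type fact that two real‑rooted polynomials of comparable degree with positive leading coefficients interlace (in one direction or the other) if and only if every polynomial in their linear pencil is real‑rooted; and the convex‑cone property of interlacers (so $g \prec f$ implies $g \prec f + cg$ for $c\geq 0$, and $g \preceq f_1,\ g \preceq f_2$ implies $g \preceq f_1 + f_2$), together with $b \preceq xb$, the extra root of $xb$ being at $0$, which is $\geq$ all roots of the nonnegative polynomial $b$. A representative implication is $(1)\Rightarrow(4)$: from $b \prec a$ one gets $b \prec a+b = \mathcal{I}_d(p)$; since $p - \mathcal{I}_d(p) = (x-1)b$, and the roots of $\mathcal{I}_d(p)$ lie in $(-\infty,0]$ while the extra root of $(x-1)b$ is at $1$, one checks that $(x-1)b$ and $\mathcal{I}_d(p)$ interlace, so the pencil $\lambda p + \mu\mathcal{I}_d(p) = (\lambda+\mu)\mathcal{I}_d(p) + \lambda(x-1)b$ is real‑rooted for all $\lambda,\mu$, giving $\mathcal{I}_d(p)\prec p$ once a value comparison fixes the direction. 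The remaining implications in the block come out the same way: $(4)\Rightarrow(2)\Rightarrow(1)$ by applying $\mathcal{I}_d$ (which exchanges $b\leftrightarrow xb$ and $p \leftrightarrow \mathcal{I}_d(p)$, fixes $a$) and extracting the atoms from $p = a+xb$ via the pencil/cone structure, and $(2)\Leftrightarrow(3)$ because $a$ and $b$ are the two atoms of one and the same decomposition. These steps are routine once the identity $\mathcal{I}_d(p)=a+b$ and the sign information are available.

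The step I expect to be the real work is $(4)\Leftrightarrow(5)$, i.e.\ passing between the reciprocal $\mathcal{I}_d(p)(x) = x^d p(1/x)$ and the reflection $\mathcal{R}_d(p)(x) = (-1)^d p(-1-x)$. The mechanism is that these two involutions are conjugate under a real Möbius transformation: the substitution $z\mapsto 1/(z-1)$ conjugates $z\mapsto 1/z$ to $z\mapsto -1-z$, and at the level of polynomials, setting $\Psi(f)(x) := x^d f(1+1/x)$ one verifies $\mathcal{R}_d(\Psi f) = \Psi(\mathcal{I}_d f)$. Since $\Psi$ is induced by a real Möbius map it sends real‑rooted polynomials to real‑rooted polynomials and preserves interlacing — with the standard bookkeeping for the pole, for possible drops in degree, and for the fact that an orientation‑reversing Möbius map can swap the labels $\preceq$ and $\succeq$ in the equal‑degree case — so $\mathcal{I}_d(f)\prec f$ is equivalent to $\mathcal{R}_d(\Psi f)\prec \Psi f$. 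To land this precisely on the original $p$ I would either apply it with $f = \Psi^{-1}(p)$ and check that $\Psi$ transports the entire list of conditions, or — probably cleaner — compute the $\mathcal{I}_d$‑decomposition of $\mathcal{R}_d(p)$ directly via the action of $\mathcal{R}_d$ on the $\gamma$‑basis, namely $\mathcal{R}_d\big(x^i(1+x)^{d-2i}\big) = (-1)^i x^{d-2i}(1+x)^i$, and thereby reduce $(5)$ to $(1)$ by the same argument as above. Either route, the main obstacle is the algebraic bookkeeping of matching up roots through this change of variables; everything else is standard interlacing algebra.
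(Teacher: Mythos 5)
First, note that the paper does not prove this statement: it is quoted verbatim from \cite[Theorem 2.6]{PS}, so the only comparison available is with the argument in that source. Your overall architecture is the right one and matches it in spirit: the identity $\mathcal{I}_d(p)=a+b$ (hence $p-\mathcal{I}_d(p)=(x-1)b$), the Hermite--Kakeya--Obreschkoff pencil criterion, the cone properties of interlacers, and the conjugacy $\mathcal{R}_d(\Psi f)=\Psi(\mathcal{I}_d f)$ for $\Psi(f)(x)=x^df(1+1/x)$ (which I checked and is correct) are exactly the right tools. Your implication $(1)\Rightarrow(4)$ is carried out correctly, and $(4)\Rightarrow(2)$ does follow from the pencil as you indicate, since $(x-1)b$ lies in the span of $p$ and $\mathcal{I}_d(p)$ and one can strip the root at $1$.

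The genuine gap is in every direction that must \emph{recover} an interlacing statement about $a$, i.e.\ $(2)\Rightarrow(1)$, $(2)\Leftrightarrow(3)$, and hence the closure of the cycle. You assert these "come out the same way" by "extracting the atoms via the pencil/cone structure," but the atom $a=p-xb$ is obtained by a \emph{subtraction}, and the cone of polynomials interlaced by $b$ is not closed under subtraction; likewise, knowing that $a$ and $b$ both interlace the same $p$ does not imply they interlace each other (e.g.\ $(x+1)(x+3)$ and $(x+\tfrac12)(x+\tfrac72)$ both interlace $x(x+2)(x+4)$ but do not interlace one another), so "(2)$\Leftrightarrow$(3) because $a$ and $b$ are the two atoms of one decomposition" is not an argument. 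Establishing real-rootedness of $a$ and $b\prec a$ from (2), (3) or (4) is precisely where the nontrivial lemmas of \cite{PS} (their Lemmas 2.1--2.5) do their work, and your sketch does not supply a substitute. Secondarily, the "bookkeeping" you defer in $(4)\Leftrightarrow(5)$ — the direction reversal under the orientation-reversing M\"obius map, degree drops, and roots absorbed at the pole — is real content rather than routine, though you at least flag it.
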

In the case that any one of (1) - (5) is satisfied, it follows that the symmetric decomposition of $p$ is real-rooted, log-concave, and unimodal, but we also recover that $p$ is alternatingly increasing, real-rooted, log-concave, unimodal and bi-$\gamma$-positive.  
Hence, it is of interest to determine when a given combinatorial generating polynomial is self-interlacing (e.g., Theorem~\ref{thm:BL}(4)).  
This property was shown to hold, for example, for colored Eulerian polynomials and applied to settle several conjectures in algebraic combinatorics \cite{PS}.  
In the following, we describe when this condition generalizes to the larger family of colored multiset Eulerian polynomials.

\begin{example}
    \label{ex: non-real-rooted with real-rooted dec}
    Note that it is possible for a polynomial that is not real-rooted to have a real-rooted symmetric decomposition with respect to its degree.  
    For instance, the polynomial
    $
    p = 1+5x+17x^2+15x^3+2x^4
    $
 has two non-real zeros but $\mathcal{I}_4$-decomposition $((1+x)^4, A_4(x))$. 
    Moreover, since $p$ has non-real zeros, it is clearly not interlaced by its own reciprocal. 
    Equivalently, we note that $A_4(x)$ does not interlace $(1+x)^4$. 
\end{example}

\begin{example}
    \label{ex: real-rooted but non-real-rooted dec}
    Similarly, it is possible for a polynomial to be real-rooted and not have a real-rooted symmetric decomposition with respect to its degree.  
    For instance, $p=(1+2x)^2$ has $\mathcal{I}_2$-decomposition $(1+x+x^2, 3A_2(x))$.
\end{example}

\subsection{Symmetric Colored Multiset Eulerian Polynomials}
\label{subsec:symmetric}

We now consider symmetry of the colored multiset Eulerian polynomials $A{_\mathbf{m}^{\mathbf{r}}}$ for positive integral vectors $\textbf{m} = (m_1,\ldots, m_n)$ and $\textbf{r} = (r_1,\ldots,r_n)$. 
To do so, we make use of the identity in Corollary~\ref{cor: main id}, noting first that it has a simple geometric interpretation. 

The \emph{Ehrhart polynomial} of a $d$-dimensional convex lattice polytope $P\subset\mathbb{R}^n$ is $\mathcal{L}(P; t) = |tP \cap\mathbb{Z}^n|$, where $tP = \{tp : p\in P\}$ denotes the \emph{$t$-th dilate} of $P$ for $t\in\mathbb{Z}_{>0}$. 
The \emph{(Ehrhart) $h^\ast$-polynomial} of $P$, denoted $h^\ast(P; x)$, is the Ehrhart polynomial of $P$ when expressed in the multinomial basis for the vector space of polynomials of degree at most $d$.  That is, $h^\ast(P; x)$ is the polynomial satisfying
\[
1 + \sum_{t > 0}\mathcal{L}(P; t)x^t = \frac{h^\ast(P; x)}{(1 - x)^{d + 1}}.
\]
In \cite{stanleydecomp}, Stanley showed that $h^\ast(P; x)$ has only nonnegative integer coefficients with constant term $1$. 
It is also well-known that $\mathcal{L}(\Delta_d;t)= \binom{t + d}{d}$ when $\Delta_d$ is the the standard $d$-dimensional simplex \cite[Theorem 2.2]{BR}.
Since $\mathcal{L}(rP;t) = \mathcal{L}(P; rt)$ for any positive integer $r$ and taking a direct product of polytopes corresponds to multiplying the Ehrhart polynomials, we have that the polynomial $\prod_{i=1}^n\binom{r_it + m_i}{m_i}$ appearing in Corollary~\ref{cor: main id} is the Ehrhart polynomial of the product of dilated simplices
\begin{equation}\label{eqn:simprod}
P^{\mathbf{r}}_{\mathbf{m}}:=\prod_{j=1}^{ n}r_j\Delta_{m_j}.
\end{equation}
It follows that $A{_{\textbf{m}}^{\textbf{r}}}$ is the $h^\ast$-polynomial of the $m$-dimensional polytope $P^{\mathbf{r}}_{\mathbf{m}}$, where $m = m_1 + \cdots + m_n$, as stated in the next lemma. 
\begin{lemma}\label{lem: colored eulerian multiset permutation polynomial}
  $
  A{_{\mathbf{m}}^{\mathbf{r}}} =h^*(P^{\mathbf{r}}_{\mathbf{m}};x).
  $
\end{lemma}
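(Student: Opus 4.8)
The plan is to assemble the lemma directly from Corollary~\ref{cor: main id} together with the standard facts about Ehrhart polynomials recalled immediately before the statement. First I would observe that, by definition, the $h^\ast$-polynomial of a $d$-dimensional lattice polytope $P$ is characterized by the identity
\[
1 + \sum_{t>0}\mathcal{L}(P;t)x^t = \frac{h^\ast(P;x)}{(1-x)^{d+1}},
\]
so to prove $A_{M_{\mathbf{m}}^{\mathbf{r}}} = h^\ast(P^{\mathbf{r}}_{\mathbf{m}};x)$ it suffices to show that $P^{\mathbf{r}}_{\mathbf{m}}$ has dimension $m = m_1+\cdots+m_n$ and Ehrhart polynomial $\mathcal{L}(P^{\mathbf{r}}_{\mathbf{m}};t) = \prod_{i=1}^n\binom{r_it+m_i}{m_i}$, and then invoke Corollary~\ref{cor: main id}, which states precisely that $A_{M_{\mathbf{m}}^{\mathbf{r}}}/(1-x)^{m+1} = \sum_{t\geq 0}\bigl(\prod_{i=1}^n\binom{r_it+m_i}{m_i}\bigr)x^t$.

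Next I would compute the Ehrhart polynomial of $P^{\mathbf{r}}_{\mathbf{m}} = \prod_{j=1}^n r_j\Delta_{m_j}$ from its building blocks. Using $\mathcal{L}(\Delta_d;t) = \binom{t+d}{d}$ for the standard $d$-simplex, the dilation identity $\mathcal{L}(rP;t) = \mathcal{L}(P;rt)$ gives $\mathcal{L}(r_j\Delta_{m_j};t) = \binom{r_jt+m_j}{m_j}$; and since the Ehrhart polynomial of a Cartesian product of lattice polytopes is the product of the Ehrhart polynomials (because lattice points in a product are pairs of lattice points in the factors), we get $\mathcal{L}(P^{\mathbf{r}}_{\mathbf{m}};t) = \prod_{j=1}^n\binom{r_jt+m_j}{m_j}$. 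The dimension of $r_j\Delta_{m_j}$ is $m_j$, so the dimension of the product is $\sum_j m_j = m$, matching the exponent $m+1$ in the denominator of Corollary~\ref{cor: main id}. Combining these observations, the generating function $1+\sum_{t>0}\mathcal{L}(P^{\mathbf{r}}_{\mathbf{m}};t)x^t$ equals $A_{M_{\mathbf{m}}^{\mathbf{r}}}/(1-x)^{m+1}$, and uniqueness of the $h^\ast$-polynomial yields the claim.

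There is essentially no obstacle here; the proof is a bookkeeping exercise linking Corollary~\ref{cor: main id} to the Ehrhart-theoretic dictionary, and all the ingredients (nonnegativity of $h^\ast$, the simplex Ehrhart count, behavior of $\mathcal{L}$ under dilation and products) are already cited in the paragraph preceding the lemma. The only point worth stating carefully is that one must know $\deg A_{M_{\mathbf{m}}^{\mathbf{r}}} \leq m$ (equivalently that the constant term of the right-hand sum is $1$ and the denominator exponent is correct) so that the identification with $h^\ast$ with respect to the right degree is legitimate; this is immediate since $\prod_i\binom{r_i\cdot 0+m_i}{m_i}=1$ and $\dim P^{\mathbf{r}}_{\mathbf{m}} = m$. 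I would therefore present the argument in three short sentences: identify the product polynomial as $\mathcal{L}(P^{\mathbf{r}}_{\mathbf{m}};t)$ via the dilation and product rules, note $\dim P^{\mathbf{r}}_{\mathbf{m}} = m$, and conclude by comparing with the defining generating function of $h^\ast(P^{\mathbf{r}}_{\mathbf{m}};x)$ and Corollary~\ref{cor: main id}.
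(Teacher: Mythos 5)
Your proposal is correct and follows exactly the route the paper takes: identify $\prod_{i=1}^n\binom{r_it+m_i}{m_i}$ as $\mathcal{L}(P^{\mathbf{r}}_{\mathbf{m}};t)$ via the simplex formula, the dilation rule and the product rule, note $\dim P^{\mathbf{r}}_{\mathbf{m}}=m$, and compare the defining generating function of $h^\ast$ with Corollary~\ref{cor: main id}. The paper presents this same argument in the paragraph preceding the lemma rather than in a separate proof environment.
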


Lemma \ref{lem: colored eulerian multiset permutation polynomial} provides a generalization of the well-known relationship between the Eulerian polynomials $A_n = A{_{\mathbf{1}}^{\mathbf{1}}}$ and the $h^*$-vector of the $n$-dimensional cube, which is a product of $n$ one-dimensional standard simplices. 
More generally, Lemma~\ref{lem: colored eulerian multiset permutation polynomial} says that the uncolored multiset Eulerian polynomial $A{_{\mathbf{m}}^{\mathbf{1}}}$ is the $h^\ast$-polynomial of an \emph{order polytope} (see \cite[Example~4.6.17]{stanley2011enumerative}), since $P_{\mathbf{m}}^{\mathbf{1}}$ is the order polytope of a naturally labeled collection of disjoint chains. 
We note that a similar polyhedral model, studied in \cite{tielker}, for a different notion of descents in colored multiset permutations agrees with our polyhedral model for $\textbf{m} = \mathbf{1}$ and $\textbf{r} = r\mathbf{1}$ (see \cite[Proposition 5.7]{tielker}). 
Lastly, the case $\textbf{m}=2\mathbf{1}$ and $\textbf{r}=\mathbf{1}$ has also been studied in the context of bipermutohedra \cite{Ardila2020TheB}.

The $m_j$-dimensional simplex $r_j\Delta_{m_j}$ has hyperplane description
\[r_j\Delta_{m_j} = \left\{(x_1, x_2,..., x_{m_j}) \in \mathbb{R}^{{m_j}}: x_1 + x_2 +\cdots+ x_{m_j} \leq r_j \text{ and } x_k \geq 0 \text{ for }k\in [m_j]\right\}. \]
Hence, the direct product $P^{\mathbf{r}}_{\mathbf{m}}$  forms the set
\begin{equation}
\label{eqn:hyperplanes}
\begin{split}
P^{\mathbf{r}}_{\mathbf{m}} & =\{(x_{11}, x_{12},..., x_{1m_1},x_{21}, x_{22},..., x_{nm_n}) \in \mathbb{R}^{{m_1+\cdots+m_n}}: 
\\ & x_{j1} + x_{j2} +\cdots+ x_{jm_j} \leq r_j \text{ and } x_{jk} \geq 0 \text{ for }k\in [m_j] \}.
\end{split}
\end{equation}
Using this observation together with Lemma~\ref{lem: colored eulerian multiset permutation polynomial}, one may deduce the degree of $A{_{\mathbf{m}}^{\mathbf{r}}}$.

\begin{lemma}
    \label{lem:degree}
    $A{_{\mathbf{m}}^{\mathbf{r}}}$ has degree $m + 1 - \max_{k\in[n]}\left\lceil\frac{m_k + 1}{r_k}\right\rceil$.
\end{lemma}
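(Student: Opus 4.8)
The plan is to compute the degree of the $h^\ast$-polynomial of the product polytope $P^{\mathbf{r}}_{\mathbf{m}}$ using Lemma~\ref{lem: colored eulerian multiset permutation polynomial}, by way of the standard characterization of the $h^\ast$-degree in terms of the codegree, i.e., in terms of the smallest dilate of $P^{\mathbf{r}}_{\mathbf{m}}$ containing a lattice point in its relative interior. Recall the classical fact (a consequence of Ehrhart--Macdonald reciprocity, see e.g. \cite{BR}) that for a $d$-dimensional lattice polytope $P$ one has $\deg h^\ast(P;x) = d + 1 - \codeg(P)$, where $\codeg(P) = \min\{ t \in \Z_{>0} : \operatorname{int}(tP) \cap \Z^n \neq \emptyset\}$. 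Here $d = m$, so the task reduces to showing $\codeg(P^{\mathbf{r}}_{\mathbf{m}}) = \max_{k\in[n]}\lceil (m_k+1)/r_k\rceil$.

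First I would use that $P^{\mathbf{r}}_{\mathbf{m}} = \prod_{j=1}^n r_j\Delta_{m_j}$ is a direct product, so its relative interior is the product of the relative interiors of the factors, and $t P^{\mathbf{r}}_{\mathbf{m}} = \prod_j t r_j \Delta_{m_j}$. Hence $\operatorname{int}(t P^{\mathbf{r}}_{\mathbf{m}})$ contains a lattice point if and only if $\operatorname{int}(t r_j \Delta_{m_j})$ does for every $j$, which gives $\codeg(P^{\mathbf{r}}_{\mathbf{m}}) = \max_{j\in[n]} \codeg(r_j \Delta_{m_j})$. Next I would reduce $\codeg(r_j\Delta_{m_j})$ to $\codeg(\Delta_{m_j})$ rescaled: using the hyperplane description in~\eqref{eqn:hyperplanes}, the relative interior of $s\Delta_{m_j}$ (for a positive integer $s$) contains a lattice point $(x_1,\dots,x_{m_j})$ iff each $x_k \ge 1$ and $x_1 + \cdots + x_{m_j} \le s-1$; such a point exists iff $m_j \le s-1$, i.e., iff $s \ge m_j + 1$. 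Applying this with $s = t r_j$, the relative interior of $t r_j \Delta_{m_j}$ contains a lattice point iff $t r_j \ge m_j + 1$, i.e., iff $t \ge \lceil (m_j+1)/r_j\rceil$. Therefore $\codeg(r_j\Delta_{m_j}) = \lceil (m_j+1)/r_j\rceil$, and combining with the product formula yields $\codeg(P^{\mathbf{r}}_{\mathbf{m}}) = \max_{k\in[n]}\lceil (m_k+1)/r_k\rceil$, which is exactly what is needed.

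Alternatively, and perhaps more in the spirit of the preceding combinatorial arguments, I could avoid invoking Ehrhart reciprocity and instead read the degree directly off Corollary~\ref{cor: main id}: writing $A_{M_{\mathbf{m}}^{\mathbf{r}}}/(1-x)^{m+1} = \sum_{t\ge 0} \big(\prod_i \binom{r_i t + m_i}{m_i}\big) x^t$ and noting that $\prod_i \binom{r_i t + m_i}{m_i}$ is a polynomial in $t$ of degree $\sum_i m_i = m$, one knows in general that $\deg h^\ast = (m+1) - k$ where $k$ is the number of leading values $t = 0, 1, \dots, k-1$ at which the Ehrhart polynomial vanishes (the order of vanishing at these consecutive small integers, again a reciprocity statement). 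Since $\binom{r_i t + m_i}{m_i} = \frac{(r_i t + 1)(r_i t + 2)\cdots(r_i t + m_i)}{m_i!}$ vanishes at $t$ precisely when $r_i t \in \{-1, -2, \dots, -m_i\}$ — which never happens for $t \ge 0$ — the relevant statement is instead about the Ehrhart series of the dual/interior count; in any case the cleanest route is the codegree computation above.

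The main obstacle I anticipate is not conceptual but bookkeeping: being careful that the reduction $\codeg$ of a product equals the maximum of the $\codeg$'s of the factors holds at the level of relative interiors (it does, since a product of relatively open sets is relatively open in the product and a lattice point of the product is a lattice point in each factor), and being careful with the ceiling arithmetic when translating ``$t r_j \ge m_j + 1$'' into ``$t \ge \lceil (m_j+1)/r_j \rceil$.'' One should also double-check the edge behavior when $m_j < r_j$, where the ceiling equals $1$, corresponding to the fact that $r_j\Delta_{m_j}$ already has an interior lattice point (indeed any $r_j \ge m_j + 1$ makes $\operatorname{int}(r_j\Delta_{m_j})$ nonempty), consistent with the simplex being ``reflexive-like'' in that regime; this matches $\deg A_{M_{\mathbf{m}}^{\mathbf{r}}} = m$ precisely when $\max_k \lceil (m_k+1)/r_k\rceil = 1$, i.e., when $r_k \ge m_k + 1$ for all $k$, which is exactly the hypothesis under which self-interlacing is later established in Subsection~\ref{subsec:self-interlacing}.
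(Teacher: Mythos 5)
Your proposal is correct and follows essentially the same route as the paper: both invoke the codegree characterization of the $h^\ast$-degree from \cite{BR} together with the hyperplane description of $P^{\mathbf{r}}_{\mathbf{m}}$ to show that $t P^{\mathbf{r}}_{\mathbf{m}}$ has an interior lattice point exactly when $t r_j \geq m_j + 1$ for all $j$. Organizing the computation factor by factor via $\codeg(\prod_j r_j\Delta_{m_j}) = \max_j \codeg(r_j\Delta_{m_j})$ rather than through the product's full inequality system is only a cosmetic difference.
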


\begin{proof}
It follows by Lemma~\ref{lem: colored eulerian multiset permutation polynomial} and \cite[Theorem 4.5]{BR} that the degree of $A{_{\mathbf{m}}^{\mathbf{r}}}$ is $m+1-s$ where $s$ is the smallest integer such that $sP^{\mathbf{r}}_{\mathbf{m}}$ contains an integer point in its relative interior.

Let $s$ be a positive integer. A point $x\in sP^{\mathbf{r}}_{\mathbf{m}}$ lies in the interior of $s P^{\mathbf{r}}_{\mathbf{m}}$ if and only if all inequalities in the hyperplane description of  $s P^{\mathbf{r}}_{\mathbf{m}}$ are satisfied strictly. Hence, it follows by \eqref{eqn:hyperplanes}
 that $sP^{\mathbf{r}}_{\mathbf{m}}$ contains an integer point in its relative interior if and only if 
all entries in $x$ are strictly positive, and further $x_{j1}+\cdots + x_{jm_j}< sr_j$ for all $j\in [n]$. 
In that case, since all entries of $x$ are at least equal to $1$, the inequality $\frac{m_j+1}{r_j}\leq s$ needs to hold for all $j\in [n]$. 
On the other hand, the vector $\mathbf{1}$ strictly satisfies all defining inequalities that arise from \eqref{eqn:hyperplanes} after dilating by $s$, whenever $\frac{m_j+1}{r_j}\leq s$ for all $j$. We have observed that $sP$ contains an integer point in its relative interior if and only if $\frac{m_j+1}{r_j}\leq s$ for all $j$, from which the claim follows.
\end{proof}

In the following, given a colored multiset Eulerian polynomial $A{_{\mathbf{m}}^{\mathbf{r}}}$, we let $d$ denote its degree (as given in Lemma~\ref{lem:degree}). 
The geometric interpretation of $A{_{\textbf{m}}^{\textbf{r}}}$ given by Lemma~\ref{lem: colored eulerian multiset permutation polynomial} allows us to characterize when $A{_{\textbf{m}}^{\textbf{r}}}$ is symmetric with respect to $d$ using geometric techniques.
We first note some definitions.

The \emph{polar body} of an $n$-dimensional polytope $P$, sometimes called the \emph{dual polytope} of $P$, is the convex body
\[
P^\ast:= \left\{(y_1,\ldots,y_n)\in\R^n : \sum_{i\in[n]}y_ip_i \leq 1, \, \forall (p_1,\ldots,p_n)\in P\right\}.
\]
Note that $P^\ast$ need not be a lattice polytope.  
If both $P$ and $P^\ast$ are lattice polytopes, we say that $P$ is \emph{reflexive}. 
By a result of Stanley \cite{Stanley1978HilbertFO}, $P$ is reflexive if and only if $h^\ast(P; x)$ is symmetric with respect to $n$.
The smallest positive integer $t$ such that $tP$ contains a lattice point in its relative interior is called the \emph{codegree} of $P$, denoted $\codeg(P)$. 
De Negri and Hibi \cite{DH97} proved that if $P\subset \mathbb{R}^n$ is an $n$-dimensional lattice polytope then $h^\ast(P;x)$ is symmetric with respect to its degree if and only if $\codeg(P)P$ contains a unique lattice point $\mathbf{p}$ in its relative interior and $\codeg(P)P - \mathbf{p}$ is reflexive. 
Applying this result, one may obtain the following characterization of the symmetric colored multiset Eulerian polynomials.

\begin{theorem}
    \label{thm: gorenstein}
    $A{_{\mathbf{m}}^{\mathbf{r}}}$ is symmetric (with respect to its degree) if and only if 
    \begin{equation}
        \label{eqn:gorenstein}
        \max_{k\in[n]}\left\lceil\frac{m_k+1}{r_k}\right\rceil r_j  = m_j +1 \qquad \mbox{for all $j\in[n]$.}
    \end{equation}
\end{theorem}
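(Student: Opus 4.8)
The plan is to apply the characterization of De Negri and Hibi directly to the polytope $P^{\mathbf{r}}_{\mathbf{m}} = \prod_{j=1}^n r_j\Delta_{m_j}$, using Lemma~\ref{lem: colored eulerian multiset permutation polynomial} to translate the symmetry of $A_{M_{\mathbf{m}}^{\mathbf{r}}}$ into a statement about this polytope. By Lemma~\ref{lem:degree} (or rather its proof), the codegree of $P^{\mathbf{r}}_{\mathbf{m}}$ equals $c := \max_{k\in[n]}\lceil (m_k+1)/r_k\rceil$, and $cP^{\mathbf{r}}_{\mathbf{m}}$ contains a unique interior lattice point exactly when the point $\mathbf{1}$ is the only one, which (from the hyperplane description \eqref{eqn:hyperplanes}) happens precisely when $cr_j \leq m_j + 1$ fails to leave room for a second point in each factor. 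So the first step is to compute $\codeg(P^{\mathbf{r}}_{\mathbf{m}})$ and to determine when the interior lattice point of $cP^{\mathbf{r}}_{\mathbf{m}}$ is unique; I expect uniqueness to hold in all cases relevant here, since each $c r_j\Delta_{m_j}$ is a dilated simplex whose interior lattice points are easy to count.

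Next I would identify the candidate interior lattice point $\mathbf{p}$ and translate: $cP^{\mathbf{r}}_{\mathbf{m}} - \mathbf{p}$ is again a product of translated dilated simplices, $\prod_{j=1}^n (cr_j\Delta_{m_j} - \mathbf{1}_{m_j})$. Since the polar body of a product is the convex hull of the ``join'' of the polar bodies (more precisely, reflexivity of a product is equivalent to reflexivity of each factor, because the dual of a free sum is a product and vice versa), the polytope $cP^{\mathbf{r}}_{\mathbf{m}} - \mathbf{p}$ is reflexive if and only if each translated simplex $cr_j\Delta_{m_j} - \mathbf{1}_{m_j}$ is reflexive. A dilated-and-translated standard simplex $a\Delta_d - \mathbf{1}_d$ has vertices $-\mathbf{1}_d$ and $(a-1)e_i - \sum_{k\neq i}\mathbf{1}$ (i.e. $ae_i - \mathbf{1}_d$), and one checks it is reflexive — equivalently its $h^\ast$-polynomial $\binom{at + d}{d}$ restricted appropriately is palindromic, i.e. the simplex has its unique interior point at the centroid with the right lattice distances — exactly when $a = d+1$, that is, when $a\Delta_d$ is the $(d+1)$-dilate. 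Applying this with $a = cr_j$ and $d = m_j$ gives the condition $cr_j = m_j + 1$ for every $j$, which is \eqref{eqn:gorenstein}.

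The third step is to package these equivalences cleanly: $A_{M_{\mathbf{m}}^{\mathbf{r}}}$ symmetric $\iff$ $cP^{\mathbf{r}}_{\mathbf{m}}$ has a unique interior lattice point $\mathbf{p}$ and $cP^{\mathbf{r}}_{\mathbf{m}} - \mathbf{p}$ reflexive $\iff$ each factor $cr_j\Delta_{m_j} - \mathbf{1}_{m_j}$ is reflexive $\iff$ $cr_j = m_j+1$ for all $j$. One should also double-check the easy direction that when \eqref{eqn:gorenstein} holds the interior lattice point is automatically unique, so no hypothesis is lost; this is immediate since $cr_j = m_j+1$ forces $\mathbf{1}$ to be the unique interior point of $cr_j\Delta_{m_j}$.

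The main obstacle I anticipate is the reflexivity computation for the building block $a\Delta_d - \mathbf{1}_d$: one must verify carefully that reflexivity forces $a = d+1$ and not merely $a \mid (d+1)$ or some weaker divisibility condition, which requires writing down the facet normals of $a\Delta_d$ (the $d$ coordinate hyperplanes $x_i \geq 0$ and the hyperplane $\sum x_i \leq a$), checking that after translating the unique interior point to the origin every facet lies at lattice distance exactly $1$, and seeing that the facet $\sum x_i = a$ contributes the constraint $a - \sum 1 = a - d = 1$. The product-reflexivity lemma ($\prod P_j$ reflexive iff each $P_j$ reflexive, equivalently via free sums of polars) is standard but I would cite it or give the one-line argument via $h^\ast$-polynomials multiplying under products and palindromicity of a product of palindromic polynomials with matching degree bookkeeping.
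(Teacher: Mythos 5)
Your proposal is correct and follows essentially the same route as the paper: both apply the De Negri--Hibi criterion to $P^{\mathbf{r}}_{\mathbf{m}}$, identify $\mathbf{1}$ as the interior lattice point of the $\codeg(P)$-th dilate, and verify reflexivity of the translate via the facet inequalities, which forces $\codeg(P)\,r_j = m_j+1$ for all $j$. The only cosmetic difference is that you factor the product and invoke reflexivity-of-products, whereas the paper reads the same condition directly off the hyperplane description of the full product.
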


\begin{proof}

By Lemma~\ref{lem: colored eulerian multiset permutation polynomial} it suffices to show that $h^*(P^{\mathbf{r}}_{\mathbf{m}};x)$  is symmetric with respect to $\deg(h^*(P^{\mathbf{r}}_{\mathbf{m}};x))$ if and only if 
$\max_{k\in[n]}\left\lceil\frac{m_k+1}{r_k}\right\rceil r_j  = m_j +1$.
Note that, as can be seen by the proof of Lemma~\ref{lem:degree}, $tP^{\mathbf{r}}_{\mathbf{m}}$ contains an interior lattice point if and only if it contains the lattice point $\mathbf{1}$.
Moreover, it can be seen from the inequalities in~\eqref{eqn:hyperplanes} that this is the unique interior point in $tP^{\mathbf{r}}_{\mathbf{m}}$ if and only if $t = \codeg(P) = \max_{k\in[n]}\left\lceil\frac{m_k+1}{r_k}\right\rceil$ and~\eqref{eqn:gorenstein} is satisfied.
Hence, to examine the symmetry of $h^*(P^{\mathbf{r}}_{\mathbf{m}};x)$ via the result of De Negri and Hibi, we consider the translate $\codeg(P)P^{\mathbf{r}}_{\mathbf{m}}-\mathbf{1}$ of $\codeg(P)P^{\mathbf{r}}_{\mathbf{m}}$ that contains $\mathbf{0}$ in its interior. By \eqref{eqn:hyperplanes}, the polytope $Q^{\mathbf{r}}_{\mathbf{m}}:=\codeg(P)P^{\mathbf{r}}_{\mathbf{m}}-\mathbf{1}$ has hyperplane description
\begin{equation*}
    \begin{split}
    Q^{\mathbf{r}}_{\mathbf{m}} &=\{(x_{11}, x_{12},..., x_{1m_1},x_{21}, x_{22},..., x_{nm_n}) \in \mathbb{R}^{{m_1+\cdots+m_n}}: 
\\ & x_{j1} + x_{j2} +\cdots+ x_{jm_j} \leq \codeg(P)r_j - m_j \text{ and } x_{jk} \geq -1 \text{ for all }k \in [m_j]\}.  
    \end{split}
\end{equation*}
By Lemma~\ref{lem:degree}, $\codeg(P^{\mathbf{r}}_{\mathbf{m}}) = \max_{k\in[n]}\left\lceil\frac{m_k+1}{r_k}\right\rceil$. Hence, if~\eqref{eqn:gorenstein} holds, we see from the above hyperplane description that $Q^{\mathbf{r}}_{\mathbf{m}}$ is reflexive since the constant coefficient in each of $x_{j1} + x_{j2} +\cdots+ x_{jm_j} \leq \codeg(P)r_j - m_j$ is equal to $1$. 
Conversely, if~\eqref{eqn:gorenstein} does not hold, this constant coefficient is not equal to $1$, implying that $Q^{\mathbf{r}}_{\mathbf{m}}$ is not reflexive. 
\end{proof}

\begin{rem}
    \label{rem: s-order symmetry}
    In \cite{Branden2016LectureH}, Br\"and\'en and Leander introduced the \emph{$s$-lecture hall order polytope}, $\mathcal{O}(P,s)$, defined for a sequence of positive integers $s$ and a poset $P$. 
    In \cite[Theorem 4.2]{Branden2016LectureH}, they showed that $h^\ast(\mathcal{O}(P,s); x)$ is symmetric if $P$ is a sign-ranked labeled poset with rank function $\rho$ and $s = \rho + \mathbf{1}$.
    An alternative proof of the ''if'' direction of Theorem~\ref{thm: gorenstein} is given by noting that $A{_{\mathbf{m}}^{\mathbf{r}}} = h^\ast(\mathcal{O}(P,s); x)$ for a sign-ranked poset $P$ with $s = \rho + 1$ and applying their theorem. 
\end{rem}

In \cite{carlitz1978generalized}, Carlitz and Hoggatt showed that $A{_{p\mathbf{1}}^{\mathbf{1}}}$ is symmetric for all $p\geq 1$. 
The same observation also follows from classic results on order polytopes. 
Specifically, the $h^\ast$-polynomial of an order polytope is symmetric if the underlying poset is naturally labeled and graded \cite[Corollary 3.15.18]{stanley2011enumerative}.
As noted above, an uncolored multiset Eulerian polynomial $A{_{\mathbf{m}}^{\mathbf{1}}}$ is the $h^\ast$-polynomial of an order polytope for a poset $P$, which can be taken to be naturally labeled and graded whenever $\mathbf{m} = p\mathbf{1}$. 
These observations are also recovered in the following corollary to Theorem~\ref{thm: gorenstein}.

\begin{corollary}
    \label{cor:symmetricuncolored}
    An (uncolored) multiset Eulerian polynomial $A{_{\mathbf{m}}^{\mathbf{1}}}$ is symmetric if and only if $\mathbf{m} = p\mathbf{1}$ for some $p\geq 1$.
\end{corollary}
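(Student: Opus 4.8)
The plan is to obtain Corollary~\ref{cor:symmetricuncolored} as the special case $\mathbf{r} = \mathbf{1}$ of Theorem~\ref{thm: gorenstein}. Substituting $r_k = 1$ for all $k\in[n]$ into the condition~\eqref{eqn:gorenstein}, every ceiling $\lceil (m_k+1)/r_k\rceil$ collapses to $m_k+1$, so the left-hand side becomes $\max_{k\in[n]}(m_k+1)$, which no longer depends on $j$. Thus~\eqref{eqn:gorenstein} with $\mathbf{r}=\mathbf{1}$ reads
\[
\max_{k\in[n]}(m_k+1) = m_j + 1 \qquad \text{for all } j\in[n].
\]

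First I would observe that this system of equalities forces $m_j+1$ to equal the common value $\max_{k}(m_k+1)$ for every $j$, and hence $m_1 = m_2 = \cdots = m_n$; writing $p\geq 1$ for this common value gives $\mathbf{m} = p\mathbf{1}$, which proves the ''only if'' direction. Conversely, if $\mathbf{m} = p\mathbf{1}$ then $m_k+1 = p+1$ for all $k$, so $\max_k(m_k+1) = p+1 = m_j+1$ for every $j$, and~\eqref{eqn:gorenstein} holds with $\mathbf{r}=\mathbf{1}$; Theorem~\ref{thm: gorenstein} then yields that $A_{M_{\mathbf{m}}^{\mathbf{1}}}$ is symmetric with respect to its degree, completing the ''if'' direction.

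There is no genuine obstacle here: the whole argument is the bookkeeping of a single specialization of Theorem~\ref{thm: gorenstein}, and the only point to verify is the elementary fact that a finite multiset of positive integers each equal to its maximum must be constant. For completeness I would also note, as in the surrounding discussion, that the ''if'' direction can alternatively be deduced from the classical statement that the $h^\ast$-polynomial of the order polytope of a naturally labeled graded poset is symmetric (applied to a disjoint union of $n$ chains each of length $p$), since $P_{p\mathbf{1}}^{\mathbf{1}}$ is such a polytope; but the route through Theorem~\ref{thm: gorenstein} is the most economical and I would present that one.
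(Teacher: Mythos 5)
Your proposal is correct and follows exactly the paper's route: both specialize condition~\eqref{eqn:gorenstein} of Theorem~\ref{thm: gorenstein} to $\mathbf{r}=\mathbf{1}$ and observe that the resulting equalities force all $m_j$ to coincide. The extra remark about the order-polytope proof of the ''if'' direction also matches the discussion preceding the corollary in the paper.
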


\begin{proof}
    Let $k^\ast = \argmax_{k\in[n]}\left\lceil\frac{m_k+1}{r_k}\right\rceil$  {be the value of $k$ that maximizes $\left\lceil\frac{m_k+1}{r_k}\right\rceil$}. 
    Noting that $\mathbf{r} = \mathbf{1}$, Theorem~\ref{thm: gorenstein} implies that $A{_{\mathbf{m}}^{\mathbf{1}}}$ is symmetric if and only if $m_j = m_{k^\ast}$ for all $j\in[n]$. 
\end{proof}

As a second corollary, we also obtain the following characterization of $A{_{\mathbf{m}}^{\mathbf{r}}}$ that are symmetric with respect to degree $m$, which will be used when deducing the self-interlacing property for colored multiset Eulerian polynomials in Subsection~\ref{subsec:self-interlacing}.

\begin{corollary}
    \label{cor: reflexive}
    $A{_{\mathbf{m}}^{\mathbf{r}}}$ is symmetric with respect to degree $m$ if and only if $r_j = m_j + 1$ for all $j\in[n]$.
\end{corollary}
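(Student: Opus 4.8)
The plan is to derive Corollary~\ref{cor: reflexive} directly from Theorem~\ref{thm: gorenstein} together with Lemma~\ref{lem:degree}. First I would observe that $A_{M_{\mathbf{m}}^{\mathbf{r}}}$ being symmetric with respect to degree $m$ is equivalent to two conditions holding simultaneously: that $A_{M_{\mathbf{m}}^{\mathbf{r}}}$ is symmetric with respect to its own degree $d$ (as characterized in Theorem~\ref{thm: gorenstein}), and that $d = m$. So the proof splits into combining the characterization~\eqref{eqn:gorenstein} with the degree formula of Lemma~\ref{lem:degree}.

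Next I would handle the easy implication. If $r_j = m_j + 1$ for all $j\in[n]$, then $\left\lceil\frac{m_k+1}{r_k}\right\rceil = \left\lceil\frac{m_k+1}{m_k+1}\right\rceil = 1$ for every $k$, so $\max_{k\in[n]}\left\lceil\frac{m_k+1}{r_k}\right\rceil = 1$. Then the left-hand side of~\eqref{eqn:gorenstein} is $1\cdot r_j = m_j + 1$, which is exactly the right-hand side, so Theorem~\ref{thm: gorenstein} gives symmetry with respect to $d$; and by Lemma~\ref{lem:degree}, $d = m + 1 - \max_k\left\lceil\frac{m_k+1}{r_k}\right\rceil = m + 1 - 1 = m$. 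Hence $A_{M_{\mathbf{m}}^{\mathbf{r}}}$ is symmetric with respect to degree $m$.

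For the converse, suppose $A_{M_{\mathbf{m}}^{\mathbf{r}}}$ is symmetric with respect to degree $m$. Symmetry with respect to degree $m$ forces $\deg A_{M_{\mathbf{m}}^{\mathbf{r}}} = m$ (a polynomial symmetric with respect to $m$ with nonzero constant term $1$ must have its top coefficient in degree $m$ — and $A_{M_{\mathbf{m}}^{\mathbf{r}}}$ has nonnegative integer coefficients with constant term $1$ by Stanley's theorem). By Lemma~\ref{lem:degree} this means $m + 1 - \max_{k}\left\lceil\frac{m_k+1}{r_k}\right\rceil = m$, i.e., $\max_k\left\lceil\frac{m_k+1}{r_k}\right\rceil = 1$. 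Since each $\left\lceil\frac{m_k+1}{r_k}\right\rceil \geq 1$ always, this forces $\left\lceil\frac{m_k+1}{r_k}\right\rceil = 1$ for every $k$, i.e., $m_k + 1 \leq r_k$ for all $k$. Now symmetry with respect to $m = d$ also means~\eqref{eqn:gorenstein} holds, which with $\max_k\left\lceil\frac{m_k+1}{r_k}\right\rceil = 1$ reads $r_j = m_j + 1$ for all $j$. This completes both directions.

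The main obstacle, such as it is, is making sure the degree bookkeeping is airtight: one must justify that symmetry with respect to the specific integer $m$ pins the degree to exactly $m$ rather than something smaller, which uses the nonnegativity and unit constant term of the $h^\ast$-polynomial (Lemma~\ref{lem: colored eulerian multiset permutation polynomial} plus Stanley's nonnegativity). Everything else is elementary arithmetic with ceilings. I would also add a one-line remark noting that this recovers Corollary~\ref{cor:symmetricuncolored} in the case $\mathbf{r} = \mathbf{1}$ only when additionally $m_j = 0$, underscoring that symmetry with respect to $m$ is strictly stronger than symmetry with respect to the actual degree $d$.
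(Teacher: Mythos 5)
Your proposal is correct and follows essentially the same route as the paper: combine the degree formula of Lemma~\ref{lem:degree} (so that symmetry with respect to $m$ forces $\max_k\lceil\frac{m_k+1}{r_k}\rceil=1$) with the characterization in Theorem~\ref{thm: gorenstein}, which then reduces to $r_j=m_j+1$ for all $j$. You merely spell out more explicitly than the paper does why the unit constant term pins the degree to exactly $m$, which is a reasonable bit of added care.
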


\begin{proof}
    By Lemma~\ref{lem:degree}, $A{{_{\mathbf{m}}^{\mathbf{r}}}}$ is degree $m$ if and only if $\max_{k\in[n]}\left\lceil\frac{m_k + 1}{r_k}\right\rceil = 1$. 
    In this case, the condition of Theorem~\ref{thm: gorenstein} reduces to $r_j = m_j + 1$ for all $j\in[n]$.
\end{proof}
    
As seen from \cite[Theorem 4.5]{BR} and Lemma~\ref{lem: colored eulerian multiset permutation polynomial}, $A{{_{\mathbf{m}}^{\mathbf{r}}}}$ will be degree $m$ if and only if $P^{\mathbf{r}}_{\mathbf{m}}$ contains an interior lattice point. 
It further follows, by \cite[Corollary 10.7]{BR}, that $A{{_{\mathbf{m}}^{\mathbf{r}}}}$ will have a nonnegative $\mathcal{I}_m$-decomposition $(a, b)$ if and only if $P^{\mathbf{r}}_{\mathbf{m}}$ contains an interior lattice point. 
In the next subsection, we will describe when the $a$ and $b$ polynomials in this symmetric decomposition interlace in order to prove the complete catalogue of distributional properties described in Subsection~\ref{sec: preliminaries} for $A{{_{\mathbf{m}}^{\mathbf{r}}}}$. 
To do this, we will describe when $A{{_{\mathbf{m}}^{\mathbf{r}}}}$ is self-interlacing and apply the result of Theorem~\ref{thm:BL}. %\cite[Theorem 2.6]{PS}. 
As nonnegativity of the symmetric decomposition is a necessary condition for applying Theorem~\ref{thm:BL}, we will make specific use of Corollary~\ref{cor: reflexive}. 

\subsection{Self-interlacing Colored Multiset Eulerian Polynomials}
\label{subsec:self-interlacing}
Corollary~\ref{cor: reflexive} can in turn be applied to prove that colored multiset Eulerian polynomials $A{{_{\mathbf{m}}^{\mathbf{r}}}}$ of degree $m$ are self-interlacing, and hence fulfill the complete catalogue of distributional properties described in Subsection~\ref{sec: preliminaries}. 
To do so, one uses the \emph{subdivision operator}, which is the linear transformation on the space of univariate polynomials given by 
% $\mathcal{E}: \binom{x}{k} \longmapsto x^k$. 
\begin{equation*}
    \begin{split}
        \mathcal{E}&: \mathbb{R}[x] \longrightarrow \mathbb{R}[x];\\
        \mathcal{E}&: \binom{x}{k} \longmapsto x^k \qquad \forall k\geq 0,
    \end{split}
\end{equation*}
where $\binom{x}{k} = \frac{x(x-1)\cdots(x-k+1)}{k!}$.
A polynomial $p$ is called $[-1,0]$-rooted if all of its zeros lie in the interval $[-1,0]$.
When applied to a $[-1,0]$-rooted Ehrhart polynomial $\mathcal{L}(P;t)$ that can be expressed as $\mathcal{L}(P;t) = \sum_{i = 0}^dc_it^i(1+t)^{d-i}$ for nonnegative $c_i$, the subdivision operator returns a $[-1,0]$-rooted polynomial \cite[Theorem 4.6]{PB15}. 
This polynomial is the $h^\ast$-polynomial up to a transformation that preserves $(-\infty,0)$-rootedness \cite[Lemma 2.7]{PS}. 
Hence, $h^\ast(P;x)$ is real-rooted.

It can be shown that the Ehrhart polynomial $\mathcal{L}(P^{\mathbf{r}}_{\mathbf{m}}; t)$ is $[-1,0]$-rooted and has nonnegative coefficients in the so-called \emph{magic basis} \cite{Ferroni2023ExamplesAC} 
\[
\{t^i(1+t)^{d-i} : 0\leq i \leq d\}.
\]
Hence, the polynomials $A{_{\textbf{m}}^{\textbf{r}}}$ are all real-rooted. 

In the following, we let $E_k^d := \mathcal{E}(t^k(t+1)^{d-k})$, $0\leq k\leq d$, and we consider the sets
\[
\mathcal{E}_d := \left\{ p = \sum_{k=0}^dc_kE_k^d : c_k\geq0\right\} \qquad \mbox{and} \qquad \mathcal{A}_d := \{p\in \mathcal{E}_d : \mathcal{R}_d(p) \prec p\},
\]
where $\mathcal{R}_d(p) = (-1)^dp(-1 - x)$ as in Theorem~\ref{thm:BL}.
For any degree $d$ Ehrhart polynomial $\mathcal{L}(P; t)$ that is nonnegative in the magic basis, we have that $\mathcal{E}\mathcal{L}(P; t)$ lies in the set $\mathcal{E}_d$. 
Moreover, if $h^\ast(P; x)$ is symmetric with respect to degree $d$ then it has $\mathcal{I}_d$-decomposition $(h^\ast(P; x), 0)$, which is nonnegative and trivially satisfies Theorem~\ref{thm:BL}(1).  
Thus, $\mathcal{E}\mathcal{L}(P; t)$ lies in $\mathcal{A}_d$ by Theorem~\ref{thm:BL}. 
This is true, specifically, for the polynomials $A{_{\textbf{m}}^{\textbf{r}}}$ captured in Corollary~\ref{cor: reflexive}. 
We also have the following observation:
\begin{lemma}
    \cite[Lemma 2.12]{PS}
    \label{lem:PL}
    If $p\in \mathcal{A}_d$, $q\in \mathcal{E}_d$ and $p\prec q$ then $q\in\mathcal{A}_d$. 
\end{lemma}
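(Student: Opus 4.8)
My plan is to reduce the statement to a single strict interlacing assertion and then prove that assertion by a deformation argument inside the cone $\mathcal{E}_d$. First I would record a few elementary facts. Every nonzero polynomial in $\mathcal{E}_d$ has degree exactly $d$ and positive leading coefficient, since each generator $E_k^d = \mathcal{E}(t^k(t+1)^{d-k})$ is the image under $\mathcal{E}$ of a monic polynomial of degree $d$ and hence has leading coefficient $d!$; moreover, by the properties of the subdivision operator recalled above, the real-rooted members of $\mathcal{E}_d$ have all their zeros in $[-1,0]$. The operator $\mathcal{R}_d$ is a linear involution of the space of polynomials of degree at most $d$; it preserves degree and leading coefficient, it sends $E_k^d$ to $E_{d-k}^d$ (an identity one checks directly from the definition) and hence maps $\mathcal{E}_d$ onto itself, and it reverses strict interlacing: for real-rooted $f,g$ of degree $d$ with positive leading coefficients, $f \prec g$ if and only if $\mathcal{R}_d(g) \prec \mathcal{R}_d(f)$. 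This last point holds because $\mathcal{R}_d$ is the substitution $x \mapsto -1-x$ up to the factor $(-1)^d$, which does not affect zeros, and $x \mapsto -1-x$ is an orientation-reversing affine automorphism of $\R$ that carries the zeros of $f$ and $g$ to those of $\mathcal{R}_d(f)$ and $\mathcal{R}_d(g)$ with their order reversed.

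Now apply the interlacing-reversal fact to the hypothesis $p \prec q$ to obtain $\mathcal{R}_d(q) \prec \mathcal{R}_d(p)$. Combining this with $\mathcal{R}_d(p) \prec p$, which holds because $p \in \mathcal{A}_d$, and with $p \prec q$, we get the chain
\[
\mathcal{R}_d(q) \;\prec\; \mathcal{R}_d(p) \;\prec\; p \;\prec\; q .
\]
Since $q \in \mathcal{E}_d$ by hypothesis, proving $q \in \mathcal{A}_d$ amounts to establishing the single relation $\mathcal{R}_d(q) \prec q$; the chain makes it plausible (the zeros of $q$ should sit, in the interlacing order, to the right of those of $\mathcal{R}_d(q)$), but it does not give it directly, since strict interlacing is not transitive.

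To obtain $\mathcal{R}_d(q) \prec q$, I would deform $p$ into $q$ inside $\mathcal{E}_d$. Set $q_\theta := (1-\theta)p + \theta q$ for $\theta \in [0,1]$; this segment lies in the convex cone $\mathcal{E}_d$, and since $p \prec q$ the polynomials $q_\theta$ are real-rooted with $q_{\theta_1} \prec q_{\theta_2}$ whenever $\theta_1 < \theta_2$. Consider $S := \{\theta \in [0,1] : \mathcal{R}_d(q_\theta) \prec q_\theta\} = \{\theta : q_\theta \in \mathcal{A}_d\}$. Then $0 \in S$ because $q_0 = p$, and $S$ is open in $[0,1]$: the zeros of $q_\theta$ and of $\mathcal{R}_d(q_\theta)$ depend continuously on $\theta$, each $q_\theta$ is real-rooted of degree $d$, and strict interlacing of two real-rooted polynomials of the same degree is an open condition. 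If, in addition, $S$ is closed, then $S$ is a nonempty clopen subset of the connected interval $[0,1]$, so $S = [0,1]$, giving $1 \in S$, that is, $\mathcal{R}_d(q) \prec q$ and hence $q \in \mathcal{A}_d$.

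Closedness of $S$ is the step I expect to be the main obstacle. If $\theta^\ast$ is a limit point of $S$ then, by continuity of zeros, $\mathcal{R}_d(q_{\theta^\ast}) \preceq q_{\theta^\ast}$; the issue is to upgrade this to strict interlacing. A failure of strictness would force $q_{\theta^\ast}$ to have a pair of zeros symmetric about $-1/2$ (a zero $\rho$ with $-1-\rho$ also a zero), and this must be shown incompatible with the facts that $q_{\theta^\ast}$ is a real-rooted element of $\mathcal{E}_d$, hence $[-1,0]$-rooted, lying strictly between $p$ and $q$ in the interlacing order, while $\mathcal{R}_d(p) \prec p$ strictly. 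It is precisely here that the hypotheses $p \in \mathcal{A}_d$ and $q \in \mathcal{E}_d$ are used in full force: the $[-1,0]$-rootedness coming from membership in $\mathcal{E}_d$ constrains the possible location of such a symmetric pair of zeros, and the strict relation $\mathcal{R}_d(p) \prec p$ then rules it out. An alternative to the deformation argument would be to work directly with the chain, using that $q$ and $\mathcal{R}_d(q)$ are nonnegative combinations of the same interlacing sequence $\{E_k^d\}_{k=0}^d$ (so that all of their nonnegative combinations are real-rooted and they share the common interlacer $E_0^d$), but in either case the crux is the same delicate incompatibility at the boundary.
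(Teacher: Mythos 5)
The statement you are asked to prove is not proved in this paper at all: it is quoted verbatim from \cite[Lemma 2.12]{PS}, so your attempt has to stand on its own. As written it does not: your reduction to the single assertion $\mathcal{R}_d(q)\prec q$, the chain $\mathcal{R}_d(q)\prec\mathcal{R}_d(p)\prec p\prec q$, and the openness of your set $S$ are the easy parts, and you yourself flag that the closedness of $S$ (equivalently, upgrading $\mathcal{R}_d(q_{\theta^\ast})\preceq q_{\theta^\ast}$ to strict interlacing at a limit point) is ``the main obstacle'' --- but you never supply that argument. That missing step is not a technicality; it is the entire content of the lemma.

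To see this concretely, write the zeros of $q$ as $\gamma_1>\cdots>\gamma_d$ and those of $p$ as $\beta_1>\cdots>\beta_d$. Since the zeros of $\mathcal{R}_d(q)$ are $-1-\gamma_d\geq\cdots\geq-1-\gamma_1$, the relation $\mathcal{R}_d(q)\prec q$ amounts to the two families of inequalities $\gamma_i+\gamma_{d+1-i}>-1$ and $\gamma_{i+1}+\gamma_{d+1-i}<-1$. The hypotheses you actually use ($\mathcal{R}_d(p)\prec p$, i.e.\ $\beta_i+\beta_{d+1-i}>-1$ and $\beta_{i+1}+\beta_{d+1-i}<-1$, together with $\gamma_i>\beta_i$ coming from $p\prec q$) immediately give the first family but say nothing about the second: the best available bound is $\gamma_{i+1}+\gamma_{d+1-i}<\beta_i+\beta_{d-i}$, and that sum is $>-1$, not $<-1$. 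Your deformation does not help, because along $q_\theta=(1-\theta)p+\theta q$ every root, hence every pairwise sum of roots, moves weakly upward, so the inequalities of the second kind are exactly the ones that can cross $-1$ at some $\theta^\ast$; there is no a priori reason for $S$ to be closed. Ruling this out is precisely where the hypothesis $q\in\mathcal{E}_d$ must do real work, and your sketch never uses it beyond the phrase that $[-1,0]$-rootedness ``constrains'' the symmetric pair of zeros (note also that the claim that real-rooted members of $\mathcal{E}_d$ are $[-1,0]$-rooted, the identity $\mathcal{R}_d(E_k^d)=E_{d-k}^d$, and the strict monotone interlacing $q_{\theta_1}\prec q_{\theta_2}$ are asserted without proof, though these are secondary). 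Without a mechanism that converts membership in $\mathcal{E}_d$ into the second family of inequalities, the implication ``$\mathcal{R}_d(p)\prec p$ and $p\prec q$ imply $\mathcal{R}_d(q)\prec q$'' is simply not a formal consequence of the zero conditions, so the proposal is a plan with its decisive step missing rather than a proof.
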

Combining Lemma~\ref{lem:PL} with Corollary~\ref{cor: reflexive} allows us to deduce the following.
\begin{theorem}
\label{thm: interlacing}
Suppose that $\mathbf{m}$, $\mathbf{r}$ satisfy $r_j\geq m_j+1$ for all $j\in[n]$. Then
 $
\mathcal{I}_{m}A{_{\mathbf{m}}^{\mathbf{r}}}\prec A{_{\mathbf{m}}^{\mathbf{r}}}.
 $
\end{theorem}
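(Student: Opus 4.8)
The plan is to realize $A_{M_{\mathbf{m}}^{\mathbf{r}}}$ as the image under the subdivision operator $\mathcal{E}$ of a suitable Ehrhart polynomial, and to push it into the set $\mathcal{A}_m$ by interlacing it with a member of $\mathcal{A}_m$ that comes from the symmetric (reflexive) case covered by Corollary~\ref{cor: reflexive}. Concretely, fix $\mathbf{m}$ and $\mathbf{r}$ with $r_j \geq m_j + 1$ for all $j$; by Lemma~\ref{lem:degree} the polynomial $A_{M_{\mathbf{m}}^{\mathbf{r}}}$ has degree $m$, and by Lemma~\ref{lem: colored eulerian multiset permutation polynomial} together with Corollary~\ref{cor: main id} it equals $\mathcal{E}\mathcal{L}(P^{\mathbf{r}}_{\mathbf{m}}; t)$, where $\mathcal{L}(P^{\mathbf{r}}_{\mathbf{m}};t) = \prod_{i=1}^n \binom{r_i t + m_i}{m_i}$. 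As recorded in the text preceding the theorem, this Ehrhart polynomial is $[-1,0]$-rooted with nonnegative coefficients in the magic basis, so $A_{M_{\mathbf{m}}^{\mathbf{r}}} = \mathcal{E}\mathcal{L}(P^{\mathbf{r}}_{\mathbf{m}};t) \in \mathcal{E}_m$.

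Next I would introduce the comparison polynomial. Let $\widehat{\mathbf{r}} = (m_1+1, \ldots, m_n+1)$. By Corollary~\ref{cor: reflexive}, $A_{M_{\mathbf{m}}^{\widehat{\mathbf{r}}}}$ is symmetric with respect to degree $m$, hence has $\mathcal{I}_m$-decomposition $(A_{M_{\mathbf{m}}^{\widehat{\mathbf{r}}}}, 0)$, which is nonnegative and trivially satisfies Theorem~\ref{thm:BL}(1); thus $A_{M_{\mathbf{m}}^{\widehat{\mathbf{r}}}} = \mathcal{E}\mathcal{L}(P^{\widehat{\mathbf{r}}}_{\mathbf{m}};t) \in \mathcal{A}_m$. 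Now the crux: I want to show
\[
A_{M_{\mathbf{m}}^{\widehat{\mathbf{r}}}} \prec A_{M_{\mathbf{m}}^{\mathbf{r}}},
\]
and then Lemma~\ref{lem:PL} immediately gives $A_{M_{\mathbf{m}}^{\mathbf{r}}} \in \mathcal{A}_m$, i.e.\ $\mathcal{R}_m(A_{M_{\mathbf{m}}^{\mathbf{r}}}) \prec A_{M_{\mathbf{m}}^{\mathbf{r}}}$, which by the equivalences in Theorem~\ref{thm:BL} yields the desired $\mathcal{I}_m A_{M_{\mathbf{m}}^{\mathbf{r}}} \prec A_{M_{\mathbf{m}}^{\mathbf{r}}}$.

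The main obstacle is establishing this interlacing. The natural route is an inductive/multiplicative argument: since $\mathcal{L}(P^{\mathbf{r}}_{\mathbf{m}};t) = \prod_i \binom{r_i t + m_i}{m_i}$ factors as a product over the coordinates, and $r_i \geq m_i + 1 = \widehat{r}_i$, I would compare factor-by-factor. For a single factor one needs that $\binom{r t + m}{m}$, for $r \geq m+1$, arises from $\binom{(m+1)t + m}{m}$ by a sequence of moves each of which preserves membership in $\mathcal{A}$ and respects interlacing — e.g.\ incrementing $r$ one step at a time and checking, via the behavior of $\mathcal{E}$ on Ehrhart polynomials of dilated simplices (which is governed by $h^\ast(r\Delta_m)$, an Eulerian-type polynomial with known interlacing behavior under dilation, cf.\ the colored-Eulerian case in \cite{PS}), that $\mathcal{E}\mathcal{L}(r\Delta_m;t) \prec \mathcal{E}\mathcal{L}((r+1)\Delta_m;t)$ or some shifted variant. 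Then the interlacing of products follows from standard compatibility lemmas for interlacing polynomials (the $h^\ast$-polynomial of a product of polytopes, equivalently $\mathcal{E}$ applied to a product of Ehrhart polynomials, behaves well with respect to interlacing when each factor does). The delicate points are (i) making sure the degree bookkeeping is consistent — all relevant polynomials must be treated as degree-$m$ objects so that $\mathcal{I}_m$ and $\mathcal{R}_m$ are the operators in play — and (ii) verifying that the single-variable dilation step genuinely preserves $\mathcal{A}_d$, which is where the results of \cite{PS} on colored Eulerian polynomials ($\mathbf{m} = \mathbf{1}$) get bootstrapped to the general multiset case; I expect this single-variable step, combined with correctly propagating interlacing through the product, to be where most of the real work lies.
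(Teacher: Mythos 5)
Your overall architecture is exactly the paper's: realize $A_{M_{\mathbf{m}}^{\mathbf{r}}}$ as $\mathcal{E}\mathcal{L}(P_{\mathbf{m}}^{\mathbf{r}};t)\in\mathcal{E}_m$, anchor the argument at the reflexive case $\widehat{\mathbf{r}}=\mathbf{m}+\mathbf{1}$ where Corollary~\ref{cor: reflexive} places $\mathcal{E}\mathcal{L}(P_{\mathbf{m}}^{\mathbf{m}+\mathbf{1}};t)$ in $\mathcal{A}_m$, then use Lemma~\ref{lem:PL} and Theorem~\ref{thm:BL}. However, the step you yourself flag as ``where most of the real work lies'' is handled incorrectly, and it is in fact much easier than you make it. You propose to increment each $r_i$ one step at a time and then propagate interlacing ``through the product'' via ``standard compatibility lemmas.'' No such lemmas exist in the generality you need: interlacing is not preserved under products of polynomials, and the $h^\ast$-polynomial of a product of polytopes is not the product of the factors' $h^\ast$-polynomials, so there is no sound way to assemble per-coordinate interlacings of $h^\ast$-polynomials into an interlacing of $A_{M_{\mathbf{m}}^{\mathbf{m}+\mathbf{1}}}$ and $A_{M_{\mathbf{m}}^{\mathbf{r}}}$. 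The correct move is to work entirely at the level of the Ehrhart polynomials \emph{before} applying $\mathcal{E}$: both $\mathcal{L}(P_{\mathbf{m}}^{\mathbf{m}+\mathbf{1}};t)$ and $\mathcal{L}(P_{\mathbf{m}}^{\mathbf{r}};t)$ factor explicitly into linear factors, with roots $-\tfrac{m_j-k}{m_j+1}$ and $-\tfrac{m_j-k}{r_j}$ respectively for $0\le k\le m_j-1$, and since $r_j\ge m_j+1$ these can be paired so that each root of the former is $\le$ the corresponding root of the latter. The cited result \cite[Theorem 4.6]{PB15} on the subdivision operator then converts this termwise root domination directly into $\mathcal{E}\mathcal{L}(P_{\mathbf{m}}^{\mathbf{m}+\mathbf{1}};t)\prec\mathcal{E}\mathcal{L}(P_{\mathbf{m}}^{\mathbf{r}};t)$ in a single application; no induction on $r$ and no product-compatibility lemma is needed.

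A second, smaller gap: at the end you pass from $\mathcal{R}_m(A_{M_{\mathbf{m}}^{\mathbf{r}}})\prec A_{M_{\mathbf{m}}^{\mathbf{r}}}$ to $\mathcal{I}_m A_{M_{\mathbf{m}}^{\mathbf{r}}}\prec A_{M_{\mathbf{m}}^{\mathbf{r}}}$ ``by the equivalences in Theorem~\ref{thm:BL},'' but those equivalences are only available under the hypothesis that the $\mathcal{I}_m$-decomposition of the polynomial is nonnegative. You must verify this: since $r_j\ge m_j+1$ for all $j$, Lemma~\ref{lem:degree} shows $P_{\mathbf{m}}^{\mathbf{r}}$ has an interior lattice point, and then \cite[Corollary 10.7]{BR} gives the required nonnegativity of the $\mathcal{I}_m$-decomposition, after which Theorem~\ref{thm:BL} applies as you intend.
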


\begin{proof}
By Lemma \ref{lem: colored eulerian multiset permutation polynomial} and Corollary~\ref{cor: main id}, $P_{\mathbf{m}}^{\mathbf{r}}$ has Ehrhart polynomial
\[
\mathcal{L}(P_{\mathbf{m}}^{\mathbf{r}};t) = \prod_{j=1}^n\binom{tr_j+m_j}{m_j}.
\]
Since $P_{\mathbf{m}}^{\mathbf{r}}$ is a $m$-dimensional polytope for $m = m_1 + \cdots + m_n$, it follows that $\mathcal{L}(P_{\mathbf{m}}^{\mathbf{r}};t)$ has degree m. 
From this formula it is also clear that $\mathcal{L}(P_{\mathbf{m}}^{\mathbf{r}};t)$ is $[-1,0]$-rooted whenever $r_j \geq m_j + 1$ for all $j$ (or even $r_j \geq m_j$). 

Following the above discussion, we first observe that $\mathcal{L}(P_{\mathbf{m}}^{\mathbf{r}};t)$ has only nonnegative coefficients in the magic basis $\{t^i( 1 + t)^{m - i}: 0 \leq i \leq m\}$. 
To see this, we write $\mathcal{L}(P_{\mathbf{m}}^{\mathbf{r}};t)$ as 
\begin{equation*}
    \begin{split}
        \mathcal{L}(P_{\mathbf{m}}^{\mathbf{r}};t) &= \prod_{j=1}^n\binom{r_jt+m_j}{m_j}
       \\ 
       &=\prod_{j=1}^n\prod_{k = 0}^{m_j - 1}\left(\left(\frac{r_j}{m_j - k}\right)t+1\right),
               \\
       &= \prod_{j=1}^n\prod_{k=0}^{m_j - 1}\left((\frac{r_j}{m_j - k}-1)t+(t+1)\right).
    \end{split}
\end{equation*}
Since $r_j,m_j\geq 1$ and $r_j \geq m_j + 1$ for all $j$, the result follows.
It follows immediately that $\mathcal{E}\mathcal{L}(P_{\mathbf{m}}^{\mathbf{r}};t)\in \mathcal{E}_m$.
Moreover, by the discussion above, \cite[Theorem 4.6]{PB15} implies that $A{_{\textbf{m}}^{\textbf{r}}}$ is real-rooted. 
In particular, by Corollary~\ref{cor: reflexive}, we have that $\mathcal{E}\mathcal{L}(P_{\mathbf{m}}^{\mathbf{r}};t) \in \mathcal{A}_m$ whenever $r_j = m_j + 1$ for all $j$, as in this case $A{_{\textbf{m}}^{\textbf{r}}}$ is symmetric with respect to degree $m$ and Theorem~\ref{thm:BL} applies (note here that $A{_{\textbf{m}}^{\textbf{r}}}$ trivially has a nonnegative $\mathcal{I}_m$-decomposition as it is symmetric with respect to degree $m$).  
Hence, by Lemma~\ref{lem:PL}, to prove the desired result it suffices to show that $\mathcal{E}\mathcal{L}(P_{\mathbf{m}}^{\mathbf{m}+\mathbf{1}};t)\prec \mathcal{E}\mathcal{L}(P_{\mathbf{m}}^{\mathbf{r}};t)$ whenever $r_j \geq m_j +1$ for all $j$.

 Since $\mathcal{L}(P_{\mathbf{m}}^{\mathbf{r}};t)$ has zeros 
$
\beta_{j1} = -\frac{m_j}{r_j}, \beta_{j2} = -\frac{m_j-1}{r_j}, ..., \beta_{jm_j} = -\frac{1}{r_j}$ for $j \in[n]$,
and $\mathcal{L}(P_{\mathbf{m}}^{\mathbf{m}+\mathbf{1}};t)$ has zeros 
$
\alpha_{j1} = -\frac{r_j - 1}{r_j}, \alpha_{j2} = -\frac{(r_j - 1 )-1}{r_j}, ..., \alpha_{jm_j} = -\frac{1}{r_j}$, for $j \in[n]$,
we have that $\alpha_{jk}\leq \beta_{jk}$ for all $j$ and $k$. 
Hence, 
it follows from \cite[Theorem 4.6]{PB15} that $\mathcal{E}\mathcal{L}(P_{\mathbf{m}}^{\mathbf{m}+\mathbf{1}};t) \prec \mathcal{E}\mathcal{L}(P_{\mathbf{m}}^{\mathbf{r}};t)$
whenever $r_j \geq m_j + 1$ for all $j$. 
By Lemma~\ref{lem:PL}, we conclude that $\mathcal{E}\mathcal{L}(P_{\mathbf{m}}^{\mathbf{r}};t)\in\mathcal{A}_m$ whenever $r_j \geq m_j + 1$ for all $j$. 

By Lemma~\ref{lem:degree}, the polytope $P_{\mathbf{m}}^{\mathbf{r}}$ with $h^\ast$-polynomial $A{_{\mathbf{m}}^{\mathbf{r}}}$ contains a lattice point in its relative interior if and only if $r_j \geq m_j + 1$ for all $j$. 
Hence, by \cite[Corollary 10.7]{BR}, $A{_{\mathbf{m}}^{\mathbf{r}}}$ has a nonnegative $\mathcal{I}_m$-decomposition whenever $r_j \geq m_j + 1$ for all $j$. 
Thus, Theorem~\ref{thm:BL} applies, and we conclude that $\mathcal{I}_mA{_{\mathbf{m}}^{\mathbf{r}}} \prec A{_{\mathbf{m}}^{\mathbf{r}}}$ whenever $r_j \geq m_j + 1$ for all $j$.
\end{proof}

The following distributional properties of $A{_\textbf{m}^{\textbf{r}}}$ now follow from Theorem \ref{thm: interlacing} together with Theorem~\ref{thm:BL}(1).
\begin{corollary}
\label{cor: self-interlacing}
If $r_j\geq m_j+1$ for all $j=1,...,n$, then the colored multiset Eulerian polynomial $A{_\textbf{m}^{\textbf{r}}}$ is real-rooted, log-concave, unimodal, alternatingly increasing and bi-$\r$-positive with a real-rooted, log-concave and unimodal $\mathcal{I}_m$-decomposition.
\end{corollary}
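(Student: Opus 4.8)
The plan is to obtain the corollary as an immediate packaging of Theorem~\ref{thm: interlacing} together with Theorem~\ref{thm:BL} and the hierarchy of distributional properties recalled in Subsection~\ref{sec: preliminaries}. First I would record that, under the standing hypothesis $r_j \geq m_j + 1$ for all $j$, Lemma~\ref{lem:degree} gives $\max_{k\in[n]}\lceil (m_k+1)/r_k\rceil = 1$, so $A_{M_\mathbf{m}^{\mathbf{r}}}$ has degree exactly $m$. Consequently the reciprocal $\mathcal{I}_m A_{M_\mathbf{m}^{\mathbf{r}}}$ appearing in Theorem~\ref{thm: interlacing} is taken with respect to the true degree, and Theorem~\ref{thm: interlacing} is precisely the assertion that $A_{M_\mathbf{m}^{\mathbf{r}}}$ is (strictly) self-interlacing, i.e. that condition (4) of Theorem~\ref{thm:BL} holds with $d = m$ and $p = A_{M_\mathbf{m}^{\mathbf{r}}}$.

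Second, I would verify the one hypothesis needed to apply Theorem~\ref{thm:BL}: that $p$ has a nonnegative $\mathcal{I}_m$-decomposition. This is already available from the proof of Theorem~\ref{thm: interlacing}: by Lemma~\ref{lem: colored eulerian multiset permutation polynomial} we have $A_{M_\mathbf{m}^{\mathbf{r}}} = h^\ast(P_\mathbf{m}^{\mathbf{r}};x)$, by Lemma~\ref{lem:degree} the polytope $P_\mathbf{m}^{\mathbf{r}}$ contains a lattice point in its relative interior exactly when $r_j \geq m_j + 1$ for all $j$, and so \cite[Corollary 10.7]{BR} yields that the $\mathcal{I}_m$-decomposition $(a,b)$ of $A_{M_\mathbf{m}^{\mathbf{r}}}$ is nonnegative.

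Third, I would invoke Theorem~\ref{thm:BL}: with $(a,b)$ nonnegative and condition (4) in force, all of (1)--(5) hold, and, as stated in the discussion following Theorem~\ref{thm:BL}, this gives that the $\mathcal{I}_m$-decomposition $(a,b)$ is real-rooted, log-concave and unimodal, and that $A_{M_\mathbf{m}^{\mathbf{r}}}$ itself is alternatingly increasing, real-rooted, log-concave, unimodal, and bi-$\gamma$-positive --- the last because $a$ and $b$ are symmetric, real-rooted and have nonnegative coefficients, hence $\gamma$-positive, as recalled in Subsection~\ref{sec: preliminaries}. This is exactly the list in the statement.

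There is essentially no obstacle to overcome here: the substantive content has all been established in Theorem~\ref{thm: interlacing} (which rests on Corollary~\ref{cor: main id}, Corollary~\ref{cor: reflexive}, and the properties of the subdivision operator). The only two points requiring a sentence of care are the ones above --- confirming $\deg A_{M_\mathbf{m}^{\mathbf{r}}} = m$ so that self-interlacing is with respect to the genuine degree, and recording nonnegativity of the symmetric decomposition so that Theorem~\ref{thm:BL} applies --- and both are immediate from Lemma~\ref{lem:degree} and \cite[Corollary 10.7]{BR}.
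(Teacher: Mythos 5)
Your proposal is correct and follows exactly the route the paper takes: the corollary is obtained by combining Theorem~\ref{thm: interlacing} (self-interlacing with respect to $m$) with Theorem~\ref{thm:BL} and the implications recalled in Subsection~\ref{sec: preliminaries}, with the nonnegativity of the $\mathcal{I}_m$-decomposition supplied by Lemma~\ref{lem:degree} and \cite[Corollary 10.7]{BR} just as in the paper's proof of Theorem~\ref{thm: interlacing}. Your two points of care (degree equal to $m$, nonnegative decomposition) are exactly the hypotheses the paper verifies, so nothing is missing.
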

This generalizes the observation of Br\"and\'en and Solus \cite[Corollary 3.2]{PS} for colored Eulerian polynomials to the colored multiset Eulerian polynomial setting.

\section{Combinatorial interpretations}
\label{sec:interpretations}

We end with some connections to other lines of combinatorial investigation via alternative interpretations of certain colored multiset Eulerian polynomials and their coefficients.
We pose some general questions that unify several ongoing case-specific studies. 
Using the results in Section~\ref{sec:distributionalproperties}, we complete these specific cases and propose next steps in the context of these more general questions. 
The proposed questions, and our results, pertain specifically to showing when a polynomial is \emph{$s$-Eulerian} \cite{savage2016mathematics} and the search for combinatorial interpretations of the $\gamma$-coefficients in bi-$\gamma$-positive generalizations of Eulerian polynomials.  

\subsection{$s$-Eulerian polynomials}\label{subsec: s-eul}
The $s$-Eulerian polynomials $E_n^s$ are a family of combinatorial generating polynomials defined for any sequence of positive integers $s = (s_1, s_2,\ldots)$. 
They are known to be real-rooted and admit numerous connections to classically studied problems, including the enumeration of lecture hall sequences, as well as connections to discrete geometry where they are interpreted as $h^\ast$-polynomials of \emph{$s$-lecture hall simplices} $P_n^s$; namely, $E_n^s = h^\ast(P_n^s;x)$, where
\[
P_n^s = \{ x\in \mathbb{R}^n : 0\leq x_1/s_1 \leq \cdots \leq x_n / s_n\leq 1\}.
\]
% \danai{we should probably add the s-lecture hall simplex definition.} 
The combinatorics of $s$-lecture hall simplices and  $s$-Eulerian polynomials are surveyed in \cite{savage2016mathematics}. 
It is shown in \cite{savagevisontai}, for instance, that the $r$-colored Eulerian polynomial satisfies $A{_{\mathbf{1}}^{r\mathbf{1}}} = E_n^{(r,2r,\ldots,nr,\ldots)}$, and $A{_{2\mathbf{1}}^{\mathbf{1}}} = E_{2n}^{\hat s}$ for $\hat s = (1,1,3,2,5,3,\ldots)$ (i.e., $\hat s_i = i$ for $i$ odd and $\hat s_i = i/2$ for $i$ even). 
These observations raised the question as to whether or not (colored) multiset Eulerian polynomials are generally \emph{$s$-Eulerian}; i.e., equal to an $s$-Eulerian polynomial for well-chosen $s$. 
\begin{question}
    \label{quest:s-Eulerian}
    When is $A{_{\mathbf{m}}^{\mathbf{r}}}$ $s$-Eulerian?
\end{question}
It was further conjectured by Savage and Visontai in \cite[Conjecture 3.25]{savagevisontai} that $E_{2n}^s = A{_{2\mathbf{1}}^{2\mathbf{1}}}$ when $s = (1,4,3,8,5,\ldots)$ (e.g. $s_{2i} = 4i$ and $s_{2i+1} = 2i + 1$). 
This conjecture was proven in \cite[Section~2]{chen2016s} where the authors additionally showed that $A{_{(2,\ldots, 2,1)}^{2\mathbf{1}}}$ is also $s$-Eulerian. 
To do so, they made the observation that $E_{2n}^{2\hat{s}} = A{_{2\mathbf{1}}^{2\mathbf{1}}}$ and $E_{2n-1}^{2\hat{s}} = A{_{(2,\ldots, 2,1)}^{2\mathbf{1}}}$.
Using Corollary~\ref{cor: main id}, we can generalize this result as follows.
\begin{proposition}
    \label{prop: s-lecture hall}
    The colored multiset Eulerian polynomial $A{_{\mathbf{m}}^{\mathbf{r}}}$ is $s$-Eulerian for any $\mathbf{m}\in\{(1,\ldots,1),(2,\ldots,2),(2,\ldots,2,1)\}$ and $\mathbf{r} = r\mathbf{1}$ for any $r\geq 1$. 
\end{proposition}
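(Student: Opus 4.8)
The plan is to exhibit, for each of the three families of $\mathbf m$, an explicit sequence $s$ of positive integers with $E_{m}^{s} = A_{M_{\mathbf m}^{r\mathbf 1}}$, where $m = \sum_i m_i$; in fact $s$ will just be $r$ times the sequence already known to witness the $r=1$ case. Recall that $E_N^s = h^\ast(P_N^s;x)$, so by the definition of the $h^\ast$-polynomial,
\[
\frac{E_N^s(x)}{(1-x)^{N+1}} = \sum_{t\ge 0}\mathcal{L}(P_N^s;t)\,x^t .
\]
The known $r=1$ results of Savage and Visontai \cite{savagevisontai} (supplemented by \cite{chen2016s}) state that $A_{M_{(1,\dots,1)}^{\mathbf 1}} = E_n^{(1,2,\dots,n)}$, that $A_{M_{(2,\dots,2)}^{\mathbf 1}} = E_{2n}^{\hat s}$, and that $A_{M_{(2,\dots,2,1)}^{\mathbf 1}} = E_{2n-1}^{\hat s}$, where $\hat s = (1,1,3,2,5,3,\dots)$ (with $\hat s_i = i$ for $i$ odd and $\hat s_i = i/2$ for $i$ even), truncated to $2n$ and $2n-1$ terms respectively. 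In each case the sequence $s$ has length $m$ (namely $m=n$, $m=2n$, $m=2n-1$), so the powers of $(1-x)$ match those appearing in Corollary~\ref{cor: main id} specialized to $\mathbf r = \mathbf 1$. Comparing coefficients in the displayed identity and in Corollary~\ref{cor: main id} then shows
\[
\mathcal{L}(P_m^{s};t) = \prod_{i=1}^n\binom{t+m_i}{m_i}
\]
for each of the three sequences $s$.

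Next I would invoke the elementary scaling identity: for any positive integer $r$ and any positive integer sequence $s=(s_1,\dots,s_N)$,
\[
\mathcal{L}(P_N^{rs};t) = \mathcal{L}(P_N^s;rt).
\]
This holds because the inequalities $0 \le x_1/(rs_1) \le \cdots \le x_N/(rs_N) \le t$ defining the $t$-th dilate of $P_N^{rs}$ are, after clearing the common factor $r$, exactly $0 \le x_1/s_1 \le \cdots \le x_N/s_N \le rt$, which defines the $rt$-th dilate of $P_N^s$. Applying this to the three sequences above gives $\mathcal{L}(P_m^{rs};t) = \mathcal{L}(P_m^s;rt) = \prod_{i=1}^n\binom{rt+m_i}{m_i}$, so, feeding this back through the Ehrhart-series identity and comparing with Corollary~\ref{cor: main id} in general,
\[
\frac{E_m^{rs}(x)}{(1-x)^{m+1}} = \sum_{t\ge 0}\mathcal{L}(P_m^{rs};t)\,x^t = \sum_{t\ge 0}\Big(\prod_{i=1}^n\binom{rt+m_i}{m_i}\Big)x^t = \frac{A_{M_{\mathbf m}^{r\mathbf 1}}(x)}{(1-x)^{m+1}}.
\]
Cancelling $(1-x)^{m+1}$ yields $E_m^{rs} = A_{M_{\mathbf m}^{r\mathbf 1}}$, so $A_{M_{\mathbf m}^{r\mathbf 1}}$ is $s$-Eulerian, with $s = r(1,2,\dots,n)$ when $\mathbf m = (1,\dots,1)$ and with $s = r\hat s$ (truncated to $2n$, resp. $2n-1$, terms) when $\mathbf m = (2,\dots,2)$, resp. $(2,\dots,2,1)$.

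I expect the only genuinely delicate point to be the $r=1$ base identity for $\mathbf m = (2,\dots,2,1)$, which may not appear verbatim in \cite{savagevisontai,chen2016s}. It can be obtained either by rerunning their bijective/inversion-sequence argument in the odd-length case, or—more cheaply—by noting that \cite{chen2016s} gives $\mathcal{L}(P_{2n-1}^{2\hat s};t) = \binom{2t+2}{2}^{n-1}(2t+1)$, whence by the scaling identity $\mathcal{L}(P_{2n-1}^{\hat s};2t) = \binom{2t+2}{2}^{n-1}(2t+1)$ for all $t$, and this already pins down the degree-$(2n-1)$ polynomial $\mathcal{L}(P_{2n-1}^{\hat s};u)$ to be $\binom{u+2}{2}^{n-1}(u+1) = \prod_{i=1}^{n}\binom{u+m_i}{m_i}$. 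Everything else is routine manipulation of Corollary~\ref{cor: main id} and the scaling identity, so no serious obstacle is anticipated beyond this bookkeeping.
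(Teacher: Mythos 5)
Your argument is correct and is essentially the paper's own proof: the paper likewise identifies $A_{M_{\mathbf{m}}^{r\mathbf{1}}}$ with the $h^\ast$-polynomial of $P_{\mathbf{m}}^{r\mathbf{1}}$ via Corollary~\ref{cor: main id} and Lemma~\ref{lem: colored eulerian multiset permutation polynomial}, and uses the same dilation identity $P_n^{r\hat s}=rP_n^{\hat s}$, so $\mathcal{L}(P_n^{r\hat s};t)=\mathcal{L}(P_n^{\hat s};rt)$, to reduce to the $r=1$ base case. The only (inessential) difference is how that base case is sourced: the paper quotes the explicit Ehrhart polynomial of $P_n^{\hat s}$ from \cite{SAVAGE2012850}, covering the even and odd cases $\mathbf{m}=(2,\ldots,2)$ and $(2,\ldots,2,1)$ at once, whereas you recover the needed formula from the known $r=1$ (and, for the odd case, the $r=2$ identity of \cite{chen2016s} plus polynomial interpolation) identities through Corollary~\ref{cor: main id} — a valid workaround for the bookkeeping point you flagged.
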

%ask for colored bijection between inversion sequences or simplices in s-seqs. and descents of colored multiset permutations for specific multisets

\begin{proof}
In the case $\mathbf{m} = \mathbf{1}$, it is shown in \cite{savagevisontai} that $A{_{\mathbf{1}}^{r\mathbf{1}}} = E_n^{(r,2r,\ldots, nr,...)}$.
For $\mathbf{m} = (2,\ldots,2)$ or $(2,\ldots,2,1)$, it is shown in \cite{SAVAGE2012850} that
\begin{equation*}
    \begin{split}
        \mathcal{L}(P_n^{\hat{s}}; t) &= (t + 1)^{\lfloor\frac{n}{2}\rfloor}\left(\frac{t + 2}{2}\right)^{\lceil\frac{n}{2}\rceil}.
    \end{split}
\end{equation*}
Since $rP_n^{\hat{s}} = P_n^{r\hat{s}}$, we further have that 
\[
\mathcal{L}(P_n^{r\hat{s}}; t) = \mathcal{L}(P_n^{\hat{s}}; rt) = (rt + 1)^{\lfloor\frac{n}{2}\rfloor}\left(\frac{rt + 2}{2}\right)^{\lceil\frac{n}{2}\rceil}. 
\]
When $n$ is even, the right-hand-side is the Ehrhart polynomial for $P_{2\mathbf{1}}^{r\mathbf{1}}$ defined in \eqref{eqn:simprod}, as given by Corollary~\ref{cor: main id} together with Lemma~\ref{lem: colored eulerian multiset permutation polynomial}.
When $n$ is odd, the right-hand-side is the Ehrhart polynomial for $P_{(2,\ldots,2,1)}^{r\mathbf{1}}$.
\end{proof}

It follows from Theorem~\ref{thm: interlacing} that for $r\geq3$ the $s$-Eulerian polynomials captured in Proposition~\ref{prop: s-lecture hall} satisfy the strongest distributional property in Subection~\ref{sec: preliminaries}; namely, they are self-interlacing and hence satisfy the complete catalogue of distributional properties in Subsection~\ref{sec: preliminaries}. 
In the case that $r = 1$, the same is true for $E^{\hat{s}}_{2n}$ since it is known to be symmetric and real-rooted by prior work \cite{savagevisontai} as well as in \cite[Theorem 6.2]{Ardila2020TheB} where these polynomials appear in the study of bipermutohedra. 
$E^{\hat{s}}_{2n-1}$ is also known to have a real-rooted symmetric decomposition \cite{weaklyincreasingtrees}.
Similarly, when $r=2$, it is only known that $E^{2\hat{s}}_{2n}$ and $E^{2\hat{s}}_{2n-1}$ are bi-$\r$-positive.
This connection prompts the following question.
\begin{question}
\label{quest: strongest properties}
    What are the strongest distributional properties satisfied by a given colored multiset Eulerian polynomial?
\end{question}
It is worth noting that even though $E_n^s$ is always real-rooted, it need not have a real-rooted symmetric decomposition. 
For instance, the polynomial in Example~\ref{ex: real-rooted but non-real-rooted dec} is $E_4^s$ for $s = (1,3,1,3,\ldots)$.
An answer to Question~\ref{quest: strongest properties} may give us deeper insights into when an $s$-Eulerian polynomial is self-interlacing, especially given a complete characterization of the $A{_{\mathbf{m}}^{\mathbf{r}}}$ that are $s$-Eulerian (e.g. a complete answer to Question~\ref{quest:s-Eulerian}).
The following observation shows that such a characterization may be challenging to obtain.

\begin{proposition}
    \label{prop:not s-eulerian}
    There exist (un)colored multiset Eulerian polynomials that are not $s$-Eulerian.
\end{proposition}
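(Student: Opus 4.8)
The plan is to exhibit a single multiset Eulerian polynomial that cannot equal $E_n^s$ for any $n$ and any positive integer sequence $s$, chosen small enough that the verification reduces to a finite check. Since $\mathbf{r}=\mathbf{1}$ is permitted, such a polynomial is simultaneously an uncolored and a colored multiset Eulerian polynomial, which is exactly what the statement asks for. Concretely I would take $\mathbf{m}=(3,2)$ and $\mathbf{r}=\mathbf{1}$; by Corollary~\ref{cor: main id} (or by listing the $\binom{5}{2}=10$ permutations of $\{1^3,2^2\}$) one gets $A_{M_{(3,2)}^{\mathbf 1}}=1+6x+3x^2$, a polynomial of degree $2$ (consistent with Lemma~\ref{lem:degree}) with value $10$ at $x=1$.

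Now suppose $A_{M_{(3,2)}^{\mathbf 1}}=E_n^s=h^\ast(P_n^s;x)$ for some $n$ and some $s=(s_1,\dots,s_n)$. Evaluating at $x=1$ forces $s_1\cdots s_n=10$. The crucial reduction is that the $s$-lecture hall simplex is, up to lattice pyramid (hence without changing its $h^\ast$-polynomial), insensitive to certain $1$'s in $s$: I would prove that $E_n^s=E_{n-1}^{s'}$ whenever $s'$ is obtained from $s$ by deleting a coordinate $s_i=1$ that is either at an end of $s$ or adjacent to another coordinate equal to $1$. Indeed a trailing $1$ realizes $P_n^s$ as the pyramid over $P_{n-1}^{s'}\times\{1\}$ with apex the origin; a leading $1$ realizes it as the pyramid over the facet $\{x_1=0\}\cong P_{n-1}^{s'}$; and if $s_i=s_{i+1}=1$ then the facet $\{x_i=x_{i+1}\}\cap P_n^s\cong P_{n-1}^{s'}$ is the facet opposite exactly one vertex of $P_n^s$, which lies at lattice distance one from its affine hull. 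After these reductions we may assume $s_1,s_n\ge 2$ and that no two consecutive entries of $s$ equal $1$; since then each $1$ occupies one of the gaps between the entries that are $\ge 2$, whose product is $10$, only finitely many $s$ survive, namely $s\in\{(10),(2,5),(5,2),(2,1,5),(5,1,2)\}$.

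The last step is a short Ehrhart computation for each surviving $s$, using $\mathcal{L}(P_n^s;t)=|\{x\in\mathbb{Z}^n:0\le x_1/s_1\le\cdots\le x_n/s_n\le t\}|$: one finds $E_1^{(10)}=1+x+\cdots+x^9$, $E_2^{(2,5)}=E_2^{(5,2)}=1+7x+2x^2$, and $E_3^{(2,1,5)}=E_3^{(5,1,2)}=1+5x+4x^2$, none of which equals $1+6x+3x^2$. Hence $A_{M_{(3,2)}^{\mathbf 1}}$ is not $s$-Eulerian; being also a colored multiset Eulerian polynomial (take $\mathbf{r}=\mathbf{1}$), this settles both assertions, and if one prefers a witness with $\mathbf{r}\neq\mathbf{1}$ the identical argument applies to other colored multiset Eulerian polynomials. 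The main obstacle is the pyramid-reduction lemma, in particular the case of two consecutive interior $1$'s: without it there are infinitely many candidate sequences $s$ with $\prod_i s_i=10$ (for instance $(2,1,\dots,1,5)$), and the finite case check collapses. The remaining steps are routine enumeration.
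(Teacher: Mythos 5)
Your proof is correct up to one small arithmetic slip, and it follows the same overall strategy as the paper: fix a small uncolored witness, use $E_n^s(1)=\prod_i s_i$ to pin down the non-unit entries of $s$, prove an invariance lemma that tames the infinitely many ways of padding $s$ with $1$s, and finish with a finite check. The differences are worth noting. The paper takes $\mathbf{m}=(3,3)$ (so $\prod_i s_i=20$), proves only the weaker invariance that inserting an extra $1$ next to an existing $1$ leaves $E_n^s$ unchanged --- via the inversion-sequence bijection $e\mapsto(e_1,\ldots,e_i,0,e_{i+1},\ldots,e_n)$, which visibly preserves $\asc$ --- and delegates the remaining finitely many sequences to a \texttt{Sage} computation. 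You take $\mathbf{m}=(3,2)$, correctly compute $A_{M_{(3,2)}^{\mathbf{1}}}=1+6x+3x^2$, and prove a stronger reduction (leading and trailing $1$s may be deleted, as well as $1$s adjacent to $1$s) by exhibiting $P_n^s$ as a lattice pyramid over $P_{n-1}^{s'}$ in each case; since $P_n^s$ is a simplex and in each of your three cases the apex opposite the relevant facet lies at lattice distance one from its affine hull, this is sound, and it buys you a fully by-hand verification of just five sequences with no computer algebra. (The same three reductions also follow in one line from the inversion-sequence description, since $s_i=1$ forces $e_i=0$.) The one error: $E_1^{(10)}=1+9x$, not $1+x+\cdots+x^9$, because the ascent statistic on $I_1^{(10)}$ only takes the values $0$ and $1$; this is harmless for your argument, since $1+9x\neq 1+6x+3x^2$ (e.g.\ by degree), and your remaining values $E_2^{(2,5)}=E_2^{(5,2)}=1+7x+2x^2$ and $E_3^{(2,1,5)}=E_3^{(5,1,2)}=1+5x+4x^2$ are correct, so the witness does the job.
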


\begin{proof}
    The proof is by way of example.
    Specifically, we will show that $A{_{\mathbf{m}}^{\mathbf{r}}}$ for $\mathbf{m} = (3,3)$ and $\mathbf{r} = (1,1)$ is not equal to $E_n^s$ for any sequence $s$ and any $n \geq 1$. 
    We first isolate the possible entries of the sequences $s$ that must be considered in closer detail. 
    As shown in  \cite{SAVAGE2012850}, 
    \[
    E_n^s = \sum_{e\in I_n^s}x^{\asc(e)},
    \]
    where 
    \[
    I_n^s = \{e\in \mathbb{Z}^n : 0 \leq e_i \leq s_i -1 \mbox{ for all } i \in [n]\}
    \]
    and 
    \[
    \asc(e) = |\{i \in \{0,\ldots, n-1 : e_i / s_i < e_{i+1} / s_{i + 1}\}|,
    \]
    where $s_0 := 1$ and $e_0 := 0$.
    Hence, $E_n^s(1) = |I_n^s| = \prod_{i = 1}^n s_i$. 
    Since $|\mathfrak{S}{_{\mathbf{m}}^{\mathbf{r}}}| = 20$ for $\mathbf{m} = (3,3)$ and $\mathbf{r} = (1,1)$,
    then if $E_n^s = A{_{\mathbf{m}}^{\mathbf{r}}}$ for some $n\geq 1$ and $s$, it must be that $\prod_{i = 1}^n s_i = 20$. 
    Hence, we consider all factorizations of $20$ into a product of positive integers, and all possible orderings of these factorizations into a sequence $s$.

    According to this observation, the possible $s$ are given by inserting an arbitrary number of $1$s between any of these sequences of orderings of the non-unit factors of $20$. 
    We claim that it suffices to check that $E_n^s \neq A{_{\mathbf{m}}^{\mathbf{r}}}$ whenever there is at most one $1$ between any two of these non-unit factors. 
    Since there are only finitely many such sequences $s$, it can be verified computationally that $E_n^s \neq A{_{\mathbf{m}}^{\mathbf{r}}}$ for these sequences.
    This computation was done using \texttt{Sage} \cite{sagemath}.
    So, to complete the proof, it suffices to prove this claim. 

    Suppose that $s$ is a sequence satisfying $\prod_{i=1}^ns_i = 20$ and that $s_i = 1$ for some $i$. Let $s'$ be the sequence produced by inserting an additional $1$ into position $i+1$ in $s$.
    Note that $|I_n^s| = |I_{n+1}^{s'}|$, which can be see via the bijection $\phi: I_n^s \longrightarrow I_{n+1}^{s'}$ where 
    \[
    \phi: (e_1,\ldots, e_n) \mapsto (e_1,\ldots, e_i, 0, e_{i+1}, \ldots e_n).
    \]
    Specifically, this is a bijection since for any $e\in I_{n+1}^{s'}$, it must be that $e_i = e_{i+1} = 0$, as $s_i = s_{i+1} = 1$. 
    From this observation, it is also clear that $\phi$ preserves the number of ascents; that is, $\asc(e) = \asc(\phi(e))$ for all $e\in I_n^s$. 
    Thus, we conclude that $E_n^s = E_{n+1}^{s'}$, which completes the proof.
\end{proof}

\begin{rem}
    \label{rem:s-eulerian decomps}
Questions \ref{quest:s-Eulerian} and \ref{quest: strongest properties} can be asked more generally for the polynomials in the symmetric decomposition of colored multiset Eulerian polynomials. One observation in that direction is that both polynomials $a,b$ in the $\mathcal{I}_{d}$-decomposition of $A{_{(2,...,2,1)}^{\mathbf{1}}}=E_{2n-1}^{\hat{s}}$ are $s$-Eulerian. In particular, $a=E_{2n+1}^{\tilde{s}}$ where $\tilde{s}=(1,1,3,2,5,3,...)-e_{2n-1}$, and $b=E_{2n}^{\hat{s}}$ where $\hat{s}=(1,1,3,2,5,3,...)$; here $e_{2n-1}$ is the standard basis vector in $\mathbb{R}^{\infty}$. 
This follows by the definitions in the proof Proposition~\ref{prop:not s-eulerian} and the following observation: An ascent always appears in the $2n$-th entry of an inversion sequence $e\in I_{2n+1}^{s}$ whenever $e_{2n+1}=s_{2n+1}-1$. Specifically, $\asc((e_1,...,e_{2n},s_{2n+1}-1))=\asc((e_1,...,e_{2n}))+1$ and the set of inversion sequences in  $I_{2n}^s$ injects into $I_{2n+1}^s$ in this way. 
The remaining inversion sequences in $I_{2n+1}^{s}$ form the set $I_{2n+1}^{\hat{s}}$. 
Comparing the degrees of the corresponding $s$-Eulerian polynomials and using the fact that $I_{2n}^s=A{_{2\mathbf{1}}^{\mathbf{1}}}$ is symmetric with respect to its degree by Theorem \ref{thm: gorenstein} and the uniqueness of the symmetric decomposition completes the argument.
\end{rem}

\begin{rem}
    \label{rmk:orderpolytopes}
    In \cite{Branden2016LectureH}, Br\"and\'en and Leander introduced a common generalization of $s$-lecture hall simplices and order polytopes called \emph{$s$-lecture hall order polytopes} (see Remark~\ref{rem: s-order symmetry}). 
    If $p = h^\ast(\mathcal{O}(P, s); x)$, we say that $p$ is \emph{$(P, s)$-Eulerian}. 
    Question~\ref{quest:s-Eulerian} asks when $A{_{\mathbf{m}}^{\mathbf{r}}}$ is $s$-Eulerian. Proposition~\ref{prop:not s-eulerian} shows that not all colored multiset Eulerian polynomials have this property. On the other hand, $A{_{\mathbf{m}}^{\mathbf{r}}}$ is always $(P,s)$-Eulerian. 
\end{rem}

\subsection{Interpreting $\gamma$-coefficients}

Corollary~\ref{cor: self-interlacing} yields a large family of relatively unexplored combinatorial generating polynomials that are bi-$\gamma$-positive.  
A natural endeavour is to explore combinatorial interpretations of these $\gamma$-coefficients.  
Such investigations may prove fruitful in, for instance, the development of a complete understanding of the $\gamma$-coefficients for $s$-Eulerian polynomials when interpreted through the lens of Proposition~\ref{prop: s-lecture hall}.
Hence, in this section, we provide results pertaining to the following general question:
\begin{question}
    \label{prob: gamma coeffs}
    Let $(a,b)$ be the $\mathcal{I}_d$-decomposition of a bi-$\gamma$-positive colored multiset Eulerian polynomial $A{_{\mathbf{m}}^{\mathbf{r}}}$, where $d$ is the degree of $A{_{\mathbf{m}}^{\mathbf{r}}}$.  
    What is a combinatorial interpretation of the coefficients of $a$ and $b$ when expressed in the $\gamma$-basis?
\end{question}

Corollary~\ref{cor: self-interlacing} provides a general family of colored multiset Eulerian polynomials that are bi-$\gamma$-positive for which Question~\ref{prob: gamma coeffs} may be considered.
This family includes the previously studied colored Eulerian polynomials $A{_{\mathbf{1}}^{r\mathbf{1}}}$ for $r\geq 1$.
In \cite[Question 2.27]{athanasiadis2017gamma}, Athanasiadis considers a special case, asking for a solution to Question~\ref{prob: gamma coeffs} for $ A{_{\mathbf{1}}^{r\mathbf{1}}}$ for $r\geq 1$. 
A solution to Athanasiadis's question would extend the well-known interpretation of the $\gamma$-coefficients for the Eulerian polynomial $A_n = A{_{\mathbf{1}}^{\mathbf{1}}}$ due to Foata and Strehl \cite{foata1973euler} (see for instance \cite[Section~3.1]{PB}), to all colored Eulerian polynomials. 
Hence, one could more generally solve Athanasiadis's question by solving Problem~\ref{prob: gamma coeffs} for all $A{_{\mathbf{m}}^{\mathbf{r}}}$ captured in Corollary~\ref{cor: self-interlacing}.

Previous works have also considered Problem~\ref{prob: gamma coeffs} in the case when $\mathbf{m} \neq \mathbf{1}$ and $\mathbf{r} = \mathbf{1}$.
In this case, we see by Lemma~\ref{lem:degree} that $d = \deg(A{_{\mathbf{m}}^{\mathbf{r}}})< m$. 
Thus, we are not in the situation of Corollary~\ref{cor: self-interlacing}, nor do we even know that the $\mathcal{I}_d$-decomposition of $A{_{\mathbf{m}}^{\mathbf{r}}}$ is nonnegative, let alone $\gamma$-positive. 
Consequently, only partial results in this case are currently known, which (similar to the previous results on Question~\ref{quest:s-Eulerian}) focus on the case when $\mathbf{m} \in\{ k\mathbf{1}, (k,\ldots, k,1)\}$. 

Specifically, by Corollary~\ref{cor:symmetricuncolored}, $A{_{\mathbf{m}}^{\mathbf{1}}}$ is symmetric with respect to its degree if and only if $\mathbf{m} = k\mathbf{1}$ for some $k\geq 1$. 
Since $A{_{k\mathbf{1}}^{\mathbf{1}}}$ has only nonnegative coefficients and is real-rooted, it is $\gamma$-positive with $\mathcal{I}_d$-decomposition $(A{_{k\mathbf{1}}^{\mathbf{1}}}, 0)$.
Lin, Ma, Ma and Zhou used the notion of weakly increasing trees to provide a combinatorial interpretation of the $\gamma$-coefficients of $A{_{2\mathbf{1}}^{\mathbf{1}}}$ \cite{weaklyincreasingtrees}, which was subsequently extended to a combinatorial interpretation of the $\gamma$-coefficients of $A{_{k\mathbf{1}}^{\mathbf{1}}}$ for all $k\geq 1$ in \cite[Theorem 1.3]{multiseteulerian}. 
In particular, the $i$-th $\gamma$-coefficient of $A{_{k\mathbf{1}}^{\mathbf{1}}}$ counts the number of weakly increasing trees on $\{1^k,2^k,\cdots,(n-1)^k,n\}$ with $i+1$ leaves and no young leaves (as defined below in Subsubsection~\ref{subsubsec:gammaproof}).  
Hence, by Corollary~\ref{cor:symmetricuncolored}, these results provide a complete answer to Problem~\ref{prob: gamma coeffs} when $A{_{\mathbf{m}}^{\mathbf{1}}}$ is symmetric.

 Perhaps the simplest case in which $A{_{\mathbf{m}}^{\mathbf{1}}}$ is not symmetric is when $\mathbf{m} = (k,\ldots, k, 1)$ for $k> 1$. 
 In this case, Ma, Ma and Yeh \cite{ma2019eulerian} showed that the polynomial $A{_{(k,\ldots,k,1)}^{\mathbf{1}}}$ is bi-$\gamma$-positive when $k=2$.  
 Lin, Xu and Zhao extended the bi-$\gamma$-positivity of $A{_{(k,\ldots,k,1)}^{\mathbf{1}}}$ to all $k$ and showed that the $b$-polynomial in its symmetric decomposition is a scalar multiple of $A{_{k\mathbf{1}}^{\mathbf{1}}}$ \cite[Theorem 1.5]{multiseteulerian}, thereby yielding an interpretation of the $\gamma$-coefficients of the $b$-polynomial in the $\mathcal{I}_d$-decomposition $(a,b)$ of $A{_{(k,\ldots,k,1)}^{\mathbf{1}}}$ for all $k> 1$.
 The question of interpreting the $\gamma$-coefficients of the $a$-polynomial was left open. 
 In Subsection~\ref{subsubsec:gammaproof}, we complete their partial solution for this case of Problem~\ref{prob: gamma coeffs} by proving the following.
 
 \begin{theorem}
 \label{thm: gamma-interpretation}
Let $(a,b)$ be the $\mathcal{I}_d$-decomposition of $A{_{(k,\ldots,k,1)}^{\mathbf{1}}}$, where $d$ denotes its degree. Then
\begin{equation} \label{m:gam}
a=\sum\limits_{i=0}^{(n-1)}\r_{n,i}x^i(1+x)^{2(n-1)+1-2i},
\end{equation}
where $\r_{n,i}$ is the number of  weakly increasing trees on $\{1^k,2^k,\cdots,(n-2)^k,n-1,n\}$ with $i+1$ leaves and no young leaves.
\end{theorem}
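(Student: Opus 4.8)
\smallskip
\noindent\textbf{Proof strategy.} The plan is to combine the description of the $b$--polynomial from \cite[Theorem~1.5]{multiseteulerian} with the weakly increasing tree model of \cite{weaklyincreasingtrees,multiseteulerian}, and then to obtain \eqref{m:gam} by a valley-hopping (modified Foata--Strehl) argument together with a bijection between the resulting extremal trees and the weakly increasing trees on $\{1^k,\ldots,(n-2)^k,n-1,n\}$ with no young leaves.

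First I would pin down $a$ explicitly. By \cite[Theorem~1.5]{multiseteulerian} the $b$--polynomial of the $\mathcal{I}_{d}$--decomposition equals $(k-1)\,g$, where $g:=A_{M_{k\mathbf{1}}^{\mathbf{1}}}$ is symmetric, so that $a=A_{M_{(k,\ldots,k,1)}^{\mathbf{1}}}-(k-1)\,x\,g$. Since $P^{\mathbf{1}}_{(k,\ldots,k,1)}=P^{\mathbf{1}}_{k\mathbf{1}}\times[0,1]$, Corollary~\ref{cor: main id} and Lemma~\ref{lem: colored eulerian multiset permutation polynomial} give $\mathcal{L}(P^{\mathbf{1}}_{(k,\ldots,k,1)};t)=(t+1)\,\mathcal{L}(P^{\mathbf{1}}_{k\mathbf{1}};t)$; translating this $(t+1)$--weighting into a relation between $h^{\ast}$--polynomials and using the symmetry of $g$ (Corollary~\ref{cor:symmetricuncolored}), a short computation both recovers $b=(k-1)g$ and rewrites $a$ as
\[
a=(1+dx)\,g+x(1-x)\,g',\qquad d=\deg A_{M_{(k,\ldots,k,1)}^{\mathbf{1}}}.
\]
Extracting coefficients gives $a_{i}=(i+1)\,p_{i+1}+(d-i+1)\,p_{i}$, where $p_{j}$ is the number of weakly increasing trees underlying $g$ with exactly $j$ leaves; this is the identity the combinatorial count must reproduce. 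The closed form also re-confirms that $a$ is symmetric with respect to $d$, and, since $A_{M_{(k,\ldots,k,1)}^{\mathbf{1}}}$ is bi-$\r$-positive \cite{multiseteulerian}, that $a$ is $\r$-positive; hence the $\r_{n,i}$ in \eqref{m:gam} are well-defined nonnegative integers.

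The combinatorial core of the proof is to realise $a$ itself as a leaf generating polynomial and apply valley-hopping. By \cite{weaklyincreasingtrees}, $A_{M_{(k,\ldots,k,1)}^{\mathbf{1}}}=\sum_{T}x^{\mathrm{leaf}(T)-1}$, the sum over all weakly increasing trees $T$ on the multiset attached to it, and --- following the proof of \cite[Theorem~1.5]{multiseteulerian} --- this set of trees splits as $\mathcal{W}_{a}\sqcup\mathcal{W}_{b}$ so that the trees of $\mathcal{W}_{b}$ account for $xb$ (being, up to the factor $k-1$ and a shift of the leaf statistic, copies of the trees underlying $g$), whence $a=\sum_{T\in\mathcal{W}_{a}}x^{\mathrm{leaf}(T)-1}$. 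I would then check that the modified Foata--Strehl action preserves $\mathcal{W}_{a}$ and, because $a$ is symmetric with respect to $d$, that each of its orbits contributes $x^{i}(1+x)^{d-2i}$ with $i+1$ the number of leaves of the unique orbit representative having no young leaves. Finally, I would exhibit a leaf-preserving bijection between the no-young-leaf trees in $\mathcal{W}_{a}$ and the weakly increasing trees on $\{1^k,\ldots,(n-2)^k,n-1,n\}$ with no young leaves, obtained by deleting and relabelling the largest value-classes of the big multiset so that only one copy of the largest repeated label survives (as $n-1$) and the top singleton is discarded. Chaining these three steps yields $a=\sum_{i}\r_{n,i}\,x^{i}(1+x)^{d-2i}$ with $\r_{n,i}$ as in the statement, i.e.\ \eqref{m:gam}.

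The step I expect to be the main obstacle is this combinatorial core, and in particular the final bijection together with the verification that $\mathcal{W}_{a}$ is a union of Foata--Strehl orbits of the claimed shape. One must show that deleting the excess copies of the largest repeated label and the top singleton of the big multiset is canonical on no-young-leaf trees, is compatible with valley-hopping (so that no-young-leaf trees correspond to no-young-leaf trees and leaf counts are preserved), and is surjective onto the no-young-leaf trees on $\{1^k,\ldots,(n-2)^k,n-1,n\}$. This requires a careful analysis of where the largest labels can sit in an orbit representative; once it is in place, \eqref{m:gam} follows by matching generating functions with the closed form for $a$ from the second paragraph.
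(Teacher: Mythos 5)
Your first step --- using $b=(k-1)A_{M_{k\mathbf{1}}^{\mathbf{1}}}$ from \cite[Theorem~1.5]{multiseteulerian}, uniqueness of the symmetric decomposition, and a split of the tree model into two subfamilies accounting for $xb$ and $a$ --- matches the paper's Lemma~\ref{lem: decomposition trees} (there the split is made explicit: $G_n^{(k)}$ consists of the multiset trees on $\{1^k,\ldots,(n-1)^k,n\}$ in which $n$ is a child of one of the first $k-1$ copies of $n-1$ from the right, and $H_n^{(k)}$ is the complement). The closed form $a=(1+dx)g+x(1-x)g'$ is correct but is not needed. The gap is in your combinatorial core, and it is genuine. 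You propose to run a modified Foata--Strehl (valley-hopping) action on $\mathcal{W}_a$ and to show that it preserves $\mathcal{W}_a$ with orbits of the shape $x^i(1+x)^{d-2i}$. No such action on multiset trees or weakly increasing trees is available here; indeed the paper explicitly remarks that not even a bijection sending ascents of multipermutations to internal nodes of weakly increasing trees is known, so constructing a compatible group action on $\mathcal{W}_a$ (a set cut out by a condition on where the single copy of $n$ sits) is precisely the hard part you would be leaving unproved. Your final deletion/relabelling bijection is also mis-specified: keeping one copy of the largest repeated label and discarding the top singleton sends $\{1^k,\ldots,(n-1)^k,n\}$ to $\{1^k,\ldots,(n-2)^k,n-1\}$, which has one element fewer than the target multiset $\{1^k,\ldots,(n-2)^k,n-1,n\}$, so it cannot be leaf-preserving as stated.

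The paper avoids both obstacles. Its Lemma~\ref{lem: bijection} produces an $\intt$-preserving bijection from all of $H_n^{(k)}$ onto all of $\mathcal{T}_{\{1^k,\ldots,(n-2)^k,n-1,n\}}$: one starts from the known identity $\sum_{T\in\mathcal{P}_{\{1^k,\ldots,(n-1)^k\}}}x^{\intt(T)}=\sum_{T\in\mathcal{T}_{\{1^k,\ldots,(n-2)^k,n-1\}}}x^{\intt(T)}$ of \cite[Theorem~3.3]{multiseteulerian}, and transfers the insertion of $n$ by sending ``$n$ attached to the $i$-th internal node (resp.\ $i$-th leaf)'' on one side to the same position on the other, after checking that both families have exactly $(n-2)k+2$ admissible insertion slots per tree. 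This identifies $a$ with the $M$-Eulerian--Narayana polynomial of $\{1^k,\ldots,(n-2)^k,n-1,n\}$, whose $\gamma$-positivity and leaf interpretation are already established in \cite[Theorem~2.12]{weaklyincreasingtrees}; no new valley-hopping argument is required. To repair your proposal you would need to replace the Foata--Strehl step and the deletion bijection with such a reduction (or else actually construct the group action on $\mathcal{W}_a$, which is not supplied by the cited literature).
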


\begin{rem}
    \label{rmk:gammacoeffs}
    Combining Theorem~\ref{thm: gamma-interpretation} with the results of Lin, Xu and Zhao \cite[Theorems 1.3 and 1.5]{multiseteulerian}, and Foata and Strehl \cite{foata1973euler} gives a complete interpretation of the $\gamma$-coefficients of the $\mathcal{I}_d$-decomposition of the multiset Eulerian polynomial $A{_{\mathbf{m}}^{\mathbf{1}}}$ for $\mathbf{m} \in\{k\mathbf{1}, (k,\ldots, k, 1)\}$. 
    Extending these results on Problem~\ref{prob: gamma coeffs} to the case in which $\mathbf{m} \in\{k\mathbf{1}, (k,\ldots, k, 1)\}$ and $\mathbf{r} = r\mathbf{1}$ for $r\geq 1$, would both provide an answer to the question of Athanasiadis \cite[Question 2.27]{athanasiadis2017gamma} and provide insights into the $\gamma$-coefficients of bi-$\gamma$-positive $s$-Eulerian polynomials via Proposition~\ref{prop: s-lecture hall}.
\end{rem}
{Theorem \ref{thm: gamma-interpretation} provides a combinatorial interpretation of the $\gamma$-coefficients arising in the symmetric decomposition of the $s$-Eulerian polynomial $E_{2n-1}^{\hat{s}} = A{_{(2,\ldots, 2,1)}^{\mathbf{1}}}$, discussed in Section \ref{subsec: s-eul}. Extending this understanding to other $s$-Eulerian polynomials outside the class of multiset Eulerian polynomials is another natural direction for future work. One such (non-overlaping) class is recently investigated by Yan, Yang and Lin in \cite{YAN2026106092}.}
{
{We also note that a different family of bi-$\gamma$-positive generalizations of Eulerian polynomials whose coefficients have a combinatorial interpretation are considered in \cite{MR4682282}.}}

\subsubsection{Proof of Theorem \ref{thm: gamma-interpretation}}
\label{subsubsec:gammaproof}
We start with a few definitions. A {\em plane tree} is a rooted tree in which the children of each node are ordered. 
\begin{definition}[Weakly increasing trees~\cite{weaklyincreasingtrees}]\label{def: weaktrees}
Let $M=\{1^{m_1},2^{m_2},\cdots,n^{m_n}\}$ be a multiset.
A {\em weakly increasing tree} on $M$ is a plane tree  that: 
\begin{enumerate}[label=(\roman*)]
\item  contains $|M|+1$ nodes, labeled by the elements in $M\cup\{0\}$,
\item  its root is labeled by $0$ and each node receives a  label weakly greater than its parent, 
\item  the labels of the children of each node are weakly increasing from left to right. 
\end{enumerate}   
Let $\mathcal{T}_{M}$ be the set of weakly increasing trees on the multiset $M$. 
\end{definition}
%A leaf or an internal node (other than the root) of a weakly increasing tree is {\em old} if it is the rightmost child of its parent; otherwise, it is {\em young} \cite{youngleaves}.

The proof of Theorem \ref{thm: gamma-interpretation} follows from the methods in \cite{multiseteulerian}.
In particular, it makes use of the \emph{$M$-Eulerian-Narayana polynomial} introduced in \cite{weaklyincreasingtrees}. 
% Recall from Definition \ref{def: weaktrees} that $\mathcal{T}_M$ denotes the set of weakly increasing trees on a multiset $M$.

\begin{definition}
    Let $t\neq 0$. The node $t$ of a weakly increasing tree ${T}$ is an {\em internal node} if $t$ has at least one child; otherwise $t$ is called a {\em leaf}. A leaf or an internal node of a weakly increasing tree is {\em old} if it is the rightmost child of its parent; otherwise, it is {\em young} \cite{youngleaves}.

    We denote the number of internal nodes in $T$ by int$({T})$.
    For a multiset $M$, the \emph{$M$-Eulerian–Narayana polynomial} is 
    \[
    A_{\mathcal{T}_M}=\sum\limits_{T\in \mathcal{T}_M}x^{\text{int}(T)}.
    \]
\end{definition}

The $\{1^k,2^k,...,(n-1)^k,n\}$-Eulerian-Narayana polynomial coincides with the multiset Eulerian polynomial $A{_{k\mathbf{1}}^\mathbf{1}}$ for all $k$.
In particular, it was proven in \cite[Theorem 3.3]{multiseteulerian} that
\begin{equation}\label{eq: weaktrees and multisets} A{_{k\mathbf{1}}^\mathbf{1}}=\sum\limits_{T\in \mathcal{T}_{\{1^k,2^k,...,(n-1)^k,n\}}}x^{\text{int}(T)}.\end{equation}

Despite the evident connection between the two polynomials, no bijection that maps ascents of multipermutations to internal nodes of weakly increasing trees has been established. 
The proof of \eqref{eq: weaktrees and multisets} relied on so-called {\em multiset trees}. 
These are precisely the weakly increasing trees where each child of a node has a label strictly greater than its parent.
The notation $\mathcal{P}_M$ is used to denote the set of multiset trees lying in $\mathcal{T}_M$ for a given multiset $M$. 

Lin, Ma, Ma and Zhou showed that there is a bijection between the multiset trees of a multiset $M$ and its multiset permutations, such that internal nodes are mapped to ascents (which are seen to be equidistributed with descents) \cite{weaklyincreasingtrees}.
The bijection goes as follows: 
For $\pi \in \mathfrak{S}{_{\mathbf{m}}}$, let $T_\pi$ be the tree on vertices $M\cup \{0\}$ in which for each $i$ in $\pi$, we take $i$ to be the child of the rightmost element $j$ preceding $i$ in $\pi$ for which $j < i$. 
If no such $j$ exists, then $i$ is a child of $0$. 
$T_\pi$ is constructed iteratively, such that each new child added to a node is placed furthest to the left (e.g., placed as the youngest child). 
For example, for $\pi=324313244312\in  \mathfrak{S}{_{\mathbf{m}}}$, $T_\pi$ is the multiset tree in Figure~\ref{fig:example}.

\begin{figure}%[h]
\centering
\begin{tikzpicture}[scale=0.3]
%画点
\node at (20,20) {$\bullet$};
\node at (15,17) {$\bullet$};
\node at (18.5,17) {$\bullet$};
\node at (21.5,17){$\bullet$};
\node at (21,14){$\bullet$};
\node at (23,14){$\bullet$};

\node at (25,17) {$\bullet$};
\node at (17,14) {$\bullet$};
\node at (16,11) {$\bullet$};
\node at (17,11) {$\bullet$};
\node at (18,11) {$\bullet$};

\node at (19,14) {$\bullet$};
\node at (14,14){$\bullet$};

\node at (20,21) {$0$};
\node at (14.3,17) {$1$};
\node at (17.8,17) {$1$};
\node at (22.2,17){$2$};
\node at (25.6,17) {$3$};
\node at (14,13) {$2$};
\node at (16.2,14) {$2$};
\node at (19,13) {$3$};
\node at (21,13) {$3$};
\node at (23,13) {$4$};
\node at (16,10) {$3$};
\node at (17,10) {$4$};
\node at (18,10) {$4$};

\draw[-] (20,20) to (15,17);
\draw[-] (20,20) to (18.5,17);
\draw[-] (20,20) to (21.5,17);
\draw[-] (20,20) to (25,17);
\draw[-] (15,17) to (14,14);
\draw[-] (18.5,17) to (17,14);
\draw[-] (18.5,17) to (19,14);
\draw[-] (17,14) to (16,11);
\draw[-] (17,14) to (17,11);
\draw[-] (17,14) to (18,11);
\draw[-] (21.5,17) to (21,14);
\draw[-] (21.5,17) to (23,14);
\end{tikzpicture}
\caption{A multiset tree on $M=\{1^2,2^3,3^4,4^3\}$.\label{fig:example}}
\end{figure}
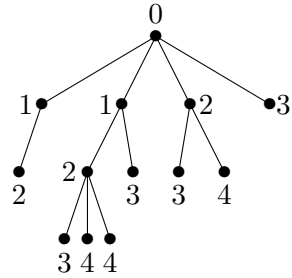

Let us now turn our attention to the multiset $M=\{1^k,...,(n-1)^k,n\}$ and the symmetric decomposition of $A{_{(k,...,k,1)}^{\mathbf{1}}}$. 
The permutations of $M$ can be split into two sets:
\begin{itemize}
    \item[$a)$] those arising from a permutation of $\{1^k,...,(n-1)^k\}$ by adding $n$ to the right of any but the rightmost copy of $n-1$ %($k-1$ candidates in total) 
    and 
    \item[$b)$] those arising from inserting $n$ in any other location. %those arising by adding $n$ to the right of any copy of some $1\leq i \leq n-2$ or the rightmost copy of $n-1$, or in the beginning.
    \end{itemize}
    Equivalently, we can consider: 
    \begin{itemize}
        \item[$a)$] the set $G_n^{(k)}$ of multiset trees on $M$ where $n$ is the child of $n-1$ which appears in the first $k-1$ appearances of $n-1$ from right to left, and 
        \item[$b)$] the set $H_n^{(k)}$ of multiset trees on $M$ where $n$ is not the child of  $n-1$ which appears in the first $k-1$ appearances of $n-1$ from right to left.
\end{itemize}

\begin{lemma}\label{lem: decomposition trees} For the symmetric decomposion $a_n+xb_n$ of $A{_{(k,...,k,1)}^{\mathbf{1}}}$, we have:
\begin{align}
xb_n=\sum_{\pi\in G_n^{(k)}} x^{\intt(T)},\label{tree:tb}\\
a_n=\sum_{\pi\in H_n^{(k)}} x^{\intt(T)}.\label{tree:a}
%A_n^{(p)}(t)=\sum_{T\in\P_n^{(p)}} t^{\intt(T)-1}. 
\end{align}
\end{lemma}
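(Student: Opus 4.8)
The plan is to verify that the pair $(a_n, b_n)$ defined by the right-hand sides of~\eqref{tree:a} and~\eqref{tree:tb} is a valid $\mathcal{I}_d$-decomposition of $A_{M_{(k,\ldots,k,1)}^{\mathbf{1}}}$, and then invoke the uniqueness of the symmetric decomposition to conclude. First I would record that, by~\eqref{eq: weaktrees and multisets} together with the split of $\mathcal{T}_M$ into $G_n^{(k)} \sqcup H_n^{(k)}$, we have
\[
A_{M_{(k,\ldots,k,1)}^{\mathbf{1}}} = \sum_{T\in \mathcal{T}_M} x^{\intt(T)} = \sum_{T\in H_n^{(k)}} x^{\intt(T)} + \sum_{T\in G_n^{(k)}} x^{\intt(T)},
\]
so the claimed identity $A_{M_{(k,\ldots,k,1)}^{\mathbf{1}}} = a_n + x b_n$ amounts to showing that every tree in $G_n^{(k)}$ has $\intt(T)\geq 1$ and that dividing its generating polynomial by $x$ produces a symmetric polynomial $b_n$; and that $a_n$ is itself symmetric with respect to the appropriate degree.

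The key step is the symmetry analysis, and here I would use the bijection described just before the lemma between multiset trees in $\mathcal{P}_M$ and multiset permutations $\mathfrak{S}_{M_{\mathbf{m}}}$ sending $\intt$ to $\asc$ (equivalently $\des$). Under this correspondence, $G_n^{(k)}$ matches exactly the permutations in set $(a)$ — those obtained from a permutation of $\{1^k,\ldots,(n-1)^k\}$ by inserting $n$ immediately after one of the first $k-1$ copies of $n-1$ (reading right to left), while $H_n^{(k)}$ matches set $(b)$. Inserting $n$ directly after a copy of $n-1$ that is not the last one creates exactly one new ascent (the pair $(n-1)\,n$ is an ascent, and the letter following it was $\geq n-1$, hence the descent/ascent status there is governed as before); this shows $\intt(T)\geq 1$ on $G_n^{(k)}$, giving a well-defined $b_n$. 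For the symmetry of $b_n$: the permutations in $(a)$ are in bijection with $\mathfrak{S}_{M_{k\mathbf 1}}$ (forget the inserted $n$), $\asc$ drops by exactly $1$, and $A_{M_{k\mathbf 1}^{\mathbf 1}}$ is symmetric with respect to its degree $m-1 = k(n-1)-1$ by Corollary~\ref{cor:symmetricuncolored}; hence $b_n$ is a scalar-free copy of $A_{M_{k\mathbf 1}^{\mathbf 1}}$ and is symmetric with respect to $m-1$. For the symmetry of $a_n$ with respect to $m$, I would argue that the involution on $\mathfrak{S}_{M_{(k,\ldots,k,1)}}$ given by reversal (as in the Remark following Corollary~\ref{cor: equidistribution}), which sends $\asc \mapsto m - \asc$, restricts to set $(b)$: reversing a permutation that does not have $n$ immediately after one of the first $k-1$ copies of $n-1$ again has this property, because the condition is invariant under the order-reversal of the word together with the swap of ``ascent'' and ``descent'' bookkeeping at the end marker. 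This gives $\deg a_n + \deg(\text{low term}) = m$ coefficient-wise, i.e. $a_n$ is symmetric with respect to $m$.

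With both $a_n$ symmetric with respect to $m$ and $b_n$ symmetric with respect to $m-1$, and $A_{M_{(k,\ldots,k,1)}^{\mathbf{1}}} = a_n + x b_n$, the pair $(a_n,b_n)$ satisfies all five defining properties of the $\mathcal{I}_m$-decomposition recalled in Subsection~\ref{sec: preliminaries}; by uniqueness it must coincide with $(a,b)$, proving~\eqref{tree:a} and~\eqref{tree:tb}. The main obstacle I anticipate is the bookkeeping in the symmetry argument for $a_n$: one must check carefully that the reversal involution (which is cleanest on permutations, using the appended $(n+1)^1$) genuinely preserves the defining condition of set $(b)$ after translating it back to trees, and that the $\asc \leftrightarrow m - \asc$ correspondence is exact rather than off by the contribution of the appended letter. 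An alternative, possibly cleaner route for that step would be to sidestep reversal entirely: since $b_n$ is already identified with $A_{M_{k\mathbf 1}^{\mathbf 1}}$ up to the shift, one can compute $a_n = A_{M_{(k,\ldots,k,1)}^{\mathbf{1}}} - x b_n$ directly and check symmetry of the resulting polynomial using the known symmetry of $A_{M_{k\mathbf 1}^{\mathbf 1}}$ and the product formula of Corollary~\ref{cor: main id} — but the combinatorial argument via reversal is more in the spirit of the tree-based proof in~\cite{multiseteulerian} and is what I would attempt first.
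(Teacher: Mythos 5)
Your overall strategy---exhibit a candidate pair, check it is a symmetric decomposition, invoke uniqueness---is reasonable, and your treatment of the $G_n^{(k)}$ side (insertion of $n$ after a non-rightmost copy of $n-1$ adds exactly one ascent) matches the paper's. But the step on which everything hinges, the symmetry of $a_n$ via the reversal involution, has a genuine gap, in fact two. First, reversal does not preserve set $(b)$: for $k=2$, $n=3$, the word $\pi=21321$ lies in $(b)$ (the $3$ is preceded by a $1$), yet its reversal $12312$ has $3$ immediately after the \emph{non-rightmost} copy of $2$, so it lies in $(a)$. Second, even on words where the sets behave, reversal does not complement the ascent statistic for multiset permutations: each internal position of $\pi$ is an ascent, a descent, \emph{or a plateau}, so $\asc(\pi)+\asc(\mathrm{rev}(\pi))$ equals $m+1$ minus the number of plateaus of $\pi$, which varies with $\pi$ (compare $112$ and $121$). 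So reversal cannot witness $p_i=p_{d-i}$. Relatedly, your reference degrees are off: since $\mathbf r=\mathbf 1$ and $k>1$, the decomposition in the lemma is the $\mathcal I_d$-decomposition for $d=\deg A_{M_{(k,\ldots,k,1)}^{\mathbf 1}}=k(n-2)+1<m$ (Lemma~\ref{lem:degree}), so $a_n$ must be symmetric with respect to $d$, not $m$, and $b_n$ with respect to $d-1=k(n-2)$, which is the degree of $A_{M_{k\mathbf 1}^{\mathbf 1}}$ (not $k(n-1)-1$). Also, forgetting the inserted $n$ is a $(k-1)$-to-one map, not a bijection, so the $G_n^{(k)}$ sum is $x(k-1)A_{M_{k\mathbf 1}^{\mathbf 1}}$, and the partition $G_n^{(k)}\sqcup H_n^{(k)}$ is of the \emph{multiset} trees $\mathcal P_M$ (whose $\intt$-generating function equals $A_M$ via the permutation bijection), not of $\mathcal T_M$.

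The paper avoids proving symmetry of $a_n$ altogether: it quotes \cite[Theorem~1.5]{multiseteulerian}, which already establishes that the $b$-part of the symmetric decomposition equals $(k-1)A_{M_{k\mathbf 1}^{\mathbf 1}}$, matches this with the insertion count $\sum_{T\in G_n^{(k)}}x^{\intt(T)}=x(k-1)A_{M_{k\mathbf 1}^{\mathbf 1}}$, and then obtains \eqref{tree:a} by subtraction using the partition and uniqueness. If you want a self-contained argument in your style, you would still need an independent proof that $A_{M_{(k,\ldots,k,1)}^{\mathbf 1}}-x(k-1)A_{M_{k\mathbf 1}^{\mathbf 1}}$ is symmetric with respect to $d$; your own suggested fallback (compute it from Corollary~\ref{cor: main id} and the known symmetry of $A_{M_{k\mathbf 1}^{\mathbf 1}}$) is the viable route, not the reversal involution.
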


\begin{proof}
The number of permutations of $M$ with the copy of $n$ placed next to a specific copy of $n-1$ is fixed and equal to the number of multiset permutations of $\{1^k,...,(n-1)^k\}$.
Further, placing a copy of $n$ next to a copy of $(n-1)$ increases the number of ascents by one. (Recall that ascents and descents are equidistributed for uncolored multiset permutations.)
Considering the restriction to use any of the first $k-1$ copies of $n-1$, and the bijection between multiset permutations and multiset trees, it follows that  {$\sum\limits_{\pi\in G_n^{(k)}} x^{\intt(T)}=x(k-1)A_{\mathbf{m}}$, where the multiset here is $M_{\mathbf{m}}=\{1^k,...,(n-1)^k\}$}.
In \cite[Theorem 1.5]{multiseteulerian}, it was shown that  {$b_n=(k-1)A_{\mathbf{m}}$, where again $M_{\mathbf{m}}=\{1^k,...,(n-1)^k\}$}. Therefore, the equality in \eqref{tree:tb} holds by uniqueness of the symmetric decomposition.
This completes the proof, since the sets $G_n^{(k)}$ and $H_n^{(k)}$ partition the space of multiset trees on $M$.
\end{proof}

%We are now ready to prove Theorem \ref{thm: gamma-interpretation}.

    \begin{lemma}\label{lem: bijection}
    \begin{equation}\label{eq: towards gamma interpretation}\sum\limits_{T\in H_n^{(k)}}x^{\text{int}(T)}=\sum\limits_{T\in \mathcal{T}_{\{1^k,...,(n-2)^k,n-1,n\}}}x^{\text{int}(T)}.\end{equation}
    \end{lemma}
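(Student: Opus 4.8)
The plan is to prove~\eqref{eq: towards gamma interpretation} by a generating-function computation organized around deleting the (unique, maximal) node labeled $n$. The first observation I would record is that in every tree under consideration the node $n$ is a leaf and is the rightmost child of its parent, so deleting $n$ and re-grafting $n$ as the rightmost child of an arbitrary node are mutually inverse operations that preserve the relevant class of trees. Deleting $n$ from a multiset tree on $\{1^k,\dots,(n-1)^k,n\}$ thus yields a multiset tree $T''$ on $\{1^k,\dots,(n-1)^k\}$ together with the node $v$ of $T''$ that was the parent of $n$; under this correspondence $H_n^{(k)}$ is exactly the set of pairs $(T'',v)$ for which $v$ is not one of the $k-1$ designated copies of $n-1$. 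Since $n-1$ is maximal in $\{1,\dots,n-1\}$, all $k$ copies of $n-1$ are leaves of $T''$, so the $k-1$ forbidden choices of $v$ lie among the leaves of $T''$.

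Next I would carry out the leaf/internal bookkeeping. Re-grafting $n$ at a leaf raises $\intt$ by one, while re-grafting it at an internal node or at the root leaves $\intt$ unchanged; a multiset tree $T''$ on $\{1^k,\dots,(n-1)^k\}$ has $k(n-1)-\intt(T'')$ leaves among its $k(n-1)$ non-root nodes, $k-1$ of which are forbidden, and $\intt(T'')+1$ internal-or-root sites, none forbidden. Writing $F(x):=\sum_{T''}x^{\intt(T'')}$ and summing over all $(T'',v)\in H_n^{(k)}$ gives
\[
\sum_{T\in H_n^{(k)}}x^{\intt(T)}=\bigl[(k(n-2)+1)x+1\bigr]F(x)+(x-x^2)F'(x).
\]
The identical deletion-of-$n$ computation applied to the trees on the right-hand side of~\eqref{eq: towards gamma interpretation} — where the smaller trees now range over all weakly increasing trees on $\{1^k,\dots,(n-2)^k,n-1\}$, which have $k(n-2)+1$ non-root nodes — produces
\[
\sum_{T\in\mathcal{T}_{\{1^k,\dots,(n-2)^k,n-1,n\}}}x^{\intt(T)}=\bigl[(k(n-2)+1)x+1\bigr]G(x)+(x-x^2)G'(x),
\]
with $G(x):=\sum_{T'\in\mathcal{T}_{\{1^k,\dots,(n-2)^k,n-1\}}}x^{\intt(T')}$. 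Hence~\eqref{eq: towards gamma interpretation} reduces to the single identity $F=G$, which I would obtain from two facts already available: the bijection of Lin, Ma, Ma and Zhou between multiset trees and multiset permutations (internal nodes $\leftrightarrow$ ascents), together with the equidistribution of ascents and descents for multiset permutations, gives $F=\sum_{\pi\in\mathfrak{S}_{\{1^k,\dots,(n-1)^k\}}}x^{\des(\pi)}=A_{\{1^k,\dots,(n-1)^k\}}$, while~\eqref{eq: weaktrees and multisets}, read with $n$ replaced by $n-1$, gives $G=A_{\{1^k,\dots,(n-1)^k\}}$ as well.

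An equivalent route that avoids having to locate the forbidden grafting sites is to run the deletion-of-$n$ computation over all multiset trees on $\{1^k,\dots,(n-1)^k,n\}$, obtaining $A_{M_{(k,\dots,k,1)}^{\mathbf{1}}}=[k(n-1)x+1]F+(x-x^2)F'$, subtract the formula above for $\sum_{T\in\mathcal{T}_{\{1^k,\dots,(n-2)^k,n-1,n\}}}x^{\intt(T)}$ to isolate the difference $(k-1)xF$, and then recognize this difference as $xb_n=\sum_{T\in G_n^{(k)}}x^{\intt(T)}$ via Lemma~\ref{lem: decomposition trees} and the identity $b_n=(k-1)A_{\{1^k,\dots,(n-1)^k\}}$ from \cite{multiseteulerian}; since $G_n^{(k)}$ and $H_n^{(k)}$ partition the multiset trees on $\{1^k,\dots,(n-1)^k,n\}$, \eqref{eq: towards gamma interpretation} follows.

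The step I expect to be the main obstacle is not any individual computation but the careful justification of the two deletion-of-$n$ correspondences as genuine bijections, together with the precise leaf/internal/root count underlying the coefficient $k(n-2)+1$ — in particular verifying that the $k-1$ forbidden grafting sites are exactly $(n-1)$-leaves of $T''$ (so that the leaf count, hence that coefficient, comes out right), and correctly treating the root, which is never counted by $\intt$ and is never forbidden, as well as the small-$n$ edge cases. Once this bookkeeping is in place, the reduction to $F=G$ and the invocation of~\eqref{eq: weaktrees and multisets} (with the index shift $n\mapsto n-1$ stated carefully) are routine.
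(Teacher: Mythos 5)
Your proposal is correct, and at its core it is the same argument as the paper's: both proofs decompose each side by deleting/re-grafting the unique maximal letter $n$, observe that the grafting sites split into $1+\intt$ neutral sites (the root and the internal nodes) versus $\intt$-raising sites (the admissible leaves, of which there are $k(n-2)+1-\intt$ on both sides, precisely because the $k-1$ forbidden sites are $(n-1)$-labeled leaves), and then reduce everything to the $\intt$-equidistribution between $\mathcal{P}_{\{1^k,\dots,(n-1)^k\}}$ and $\mathcal{T}_{\{1^k,\dots,(n-2)^k,n-1\}}$ --- your identity $F=G$, which is exactly the restatement of \eqref{eq: weaktrees and multisets} (with the index shift) that the paper invokes as its key input. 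The difference is only in execution: the paper packages the site-matching as an explicit $\intt$-preserving injection $H_n^{(k)}\to\mathcal{T}_{\{1^k,\dots,(n-2)^k,n-1,n\}}$ together with a cardinality comparison, whereas you sum over grafting sites weighted by $x^{\intt}$ and arrive at the common expression $[(k(n-2)+1)x+1]F+(x-x^2)F'$ (and likewise with $G$), which sidesteps constructing any injection and makes the leaf/root bookkeeping fully explicit; the points you flag as needing care (the forbidden sites are leaves labeled $n-1$, the root is a neutral site, $n$ is always a rightmost leaf so deletion and re-grafting are inverse) all check out. Your alternative second route is also sound and not circular, since Lemma~\ref{lem: decomposition trees} is established independently, but it still relies on $F=G$, so it offers no real savings over the first.
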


\begin{proof}
    The proof makes use of the identity in equation \eqref{eq: weaktrees and multisets}. For simplicity, we will work with the following statement, which is equivalent to equation \eqref{eq: weaktrees and multisets} (\cite[Theorem 3.3]{multiseteulerian}):
    \begin{equation}\label{eq: theorem 3.3 in multiset paper}\sum\limits_{T\in \mathcal{P}_{\{1^k,...,(n-1)^k\}}}x^{\text{int}(T)}=\sum\limits_{T\in \mathcal{T}_{\{1^k,...,(n-2)^k,n-1\}}}x^{\text{int}(T)}.\end{equation}

Let us first compare the cardinalities of the sets $H_n^{(k)}$ and $\mathcal{T}_{\{1^k,...,(n-2)^k,n-1,n\}}$.
The permutations in $H_n^{(k)}$ can be generated by inserting the copy of $n$ in every tree in $\mathcal{P}_{\{1^k,...,(n-1)^k\}}$. 
Since there are $(n-1)k+1$ nodes in $\mathcal{P}_{\{1^k,...,(n-1)^k\}}$ in total, and $k-1$ of those are forbidden, this leaves $(n-2)k+2$ nodes to receive $n$ as a child, i.e., \[|H_n^{(k)}|=((n-2)k+2)|\mathcal{P}_{\{1^k,...,(n-1)^k\}}|.\] 
Similarly, a weakly increasing tree on $\{1^k,...,(n-2)^k,n-1,n\}$ can be formed by inserting $n$ in every tree in $\mathcal{T}_{\{1^k,...,(n-2)^k,n-1\}}$. 
There are $(n-2)k+2$ nodes in $\mathcal{T}_{\{1^k,...,(n-2)^k,n-1\}}$ and each of them is eligible to receive $n$ as a child. 
Hence it follows that
\[|\mathcal{T}_{\{1^k,...,(n-2)^k,n-1,n\}}|=((n-2)k+2)|\mathcal{T}_{\{1^k,...,(n-2)^k,n-1\}}|.\]
Hence, $|H_n^{(k)}|=|\mathcal{T}_{\{1^k,...,(n-2)^k,n-1,n\}}|$.

To prove \eqref{eq: towards gamma interpretation}, we will show that there is an injection $\psi: H_n^{(k)} \longrightarrow \mathcal{T}_{\{1^k,...,(n-2)^k,n-1,n\}}$ that preserves the number of internal nodes.
Given a tree $T_n$ in $ H_n^{(k)}$, there exists a tree $T_{n-1}$ in $\mathcal{P}_{\{1^k,...,(n-1)^k\}}$ such that $T_n$ is constructed from $T_{n-1}$ by inserting $n$ as a child of the $i$-th internal node (resp. leaf) of $T_{n-1}$, where the % internal nodes (resp. leaves) are enumerated from left to right and from the root towards the leaves 
 {leaves are enumerated from left to right and the internal nodes are also enumerated from left to right, with priority given to the nodes closer to the root for nodes that lie in the same chain. }
Then, by \eqref{eq: theorem 3.3 in multiset paper}, there exists a weakly increasing multiset tree $\phi(T_{n-1})\in \mathcal{T}_{\{1^k,...,(n-2)^k,n-1\}}$ that has the same number of internal nodes as $T_{n-1}$.
We proceed by inserting $n$ as the $i$-th internal node (resp. leaf) of $\phi(T_{n-1})$, again counting from left to right and from root to leaves  {(if $n$ was inserted as a child of the root of $T_{n-1}$, we insert $n$ as a child of the root of $\phi(T_{n-1})$)}.
We choose this as $\psi(T_n)$. 
 {
For example, in Figure \ref{fig: bijection gamma pos}, there is exactly one internal node in each tree (which in both cases it has label 1). There are six leaves on the left tree that lies in $\T_{\{1^3,2^3,3\}}$, all eligible to receive $n=4$ as a child, and they are ordered from left to right, i.e., as $1,1,2,2,2,3$. On the other hand, the multiset tree on the right that lies in $\P_{\{1^3,2^3,3^3\}}$ has eight leaves but two of them are forbidden from receiving $4$ as a child (red nodes). The remaining six leaves are ordered from left to right, i.e., as $1,2,3,1,2,2$. Then $4$ is inserted to the third leaf in each tree. In other words, if $\phi(T_{n-1})=\T_{\{1^3,2^3,3\}}$, then the multiset tree on the right would be mapped to the weakly increasing tree on the left through $\phi$.} 

This provides the desired injection.
\end{proof}

\begin{figure}%[h]
\centering
\begin{tikzpicture}[scale=0.2]
%%画点

\node at (0,20) {$\bullet$};
\node at (-4,16) {$\bullet$};
\node at (4,16) {$\bullet$};
\node at (0,16) {$\bullet$};
\node at (0,16) {$\bullet$};
\node at (-4.5,12) {$\bullet$};
\node at (-1.5,12) {$\bullet$};
\node at (1.5,12) {$\bullet$};
\node at (4.5,12) {$\bullet$};
\node at (20,20) {$\bullet$};
\node at (20,15) {$\bullet$};
\node at (15.5,16.5) {$\bullet$};
\node at (17.5,15) {$\bullet$};
\node at (22.5,15) {$\bullet$};
\node at (24.5,16.5) {$\bullet$};
\node at (13,11)  {$\bullet$};
\node at (16,11)  {$\bullet$};
\node at (19,11)  {$\color{red}{\bullet}$};
\node at (22,11)  {$\color{red}{\bullet}$};
%%%%%%%%%%%%%%%%%%%
\node at (-1.5, 8){$\bullet$};
\node at (16, 7){$\bullet$};
%%数字
\node at (0,21.5) {$0$};
\node at (-5,16.5) {$1$};
\node at (-1,16.5) {$1$};
\node at (3,15.5) {$3$};
\node at (-5,11) {$1$};
\node at (-2.5,11) {$2$};
\node at (1,11) {$2$};
\node at (4,11) {$2$};
\node at (20,21.5) {$0$};
\node at (14.5,17) {$1$};
\node at (16.8,16) {$1$};
\node at (19.2,15.5) {$1$};
\node at (23.4,15.3) {$2$};
\node at (25.5,17) {$2$};
\node at (12.5,10) {$2$};
\node at (15.0,10) {$3$};
\node at (18.5,10) {$3$};
\node at (21.5,10) {$3$};
%%%%%%%%%%%%%%%%%%%%
\node at (-2.5, 7){$4$};
\node at (15.0, 6){$4$};

%%画边
\draw[-] (0,20) to (0,16);
\draw[-] (0,20) to (-4,16);
\draw[-] (0,20) to (4,16);
\draw[-] (0,16) to (-4.5,12);
\draw[-] (0,16) to (-1.5,12);
\draw[-] (0,16) to (1.5,12);
\draw[-] (0,16) to (4.5,12);
\draw[-] (20,20) to (20,15);
\draw[-] (20,20) to (15.5,16.5);
\draw[-] (20,20) to (17.5,15);
\draw[-] (20,20) to (22.5,15);
\draw[-] (20,20) to (24.5,16.5);
\draw[-] (17.5,15) to (13,11);
\draw[-] (17.5,15) to (16,11);
\draw[-] (17.5,15) to (19,11);
\draw[-] (17.5,15) to (22,11);
%%%%%%%%%%%%%%%%%%%
\draw[dashed] (-1.5, 12) to (-1.5, 8);
\draw[dashed](16, 11) to (16, 7);
\end{tikzpicture}
\caption{ Inserting $4$  {into} a weakly increasing tree in $\T_{\{1^3,2^3,3\}}$ (left) and a multiset tree in $\P_{\{1^3,2^3,3^3\}}$ (right). The red nodes are forbidden for receiving $4$ as a child. \label{fig: bijection gamma pos}}
\end{figure}
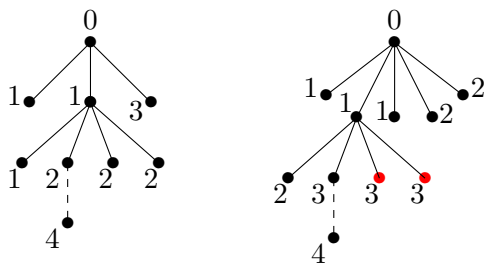

\begin{proof}[Proof of Theorem~\ref{thm: gamma-interpretation}]
The right-hand-side in \eqref{eq: towards gamma interpretation} is an $M$-Eulerian-Narayana polynomial. The $M$-Eulerian-Narayana polynomials are $\gamma$-positive, and the $i$-th $\gamma$-coefficient counts the number of weakly increasing trees on $M$ with $i + 1$ leaves and without young leaves \cite[Theorem 2.12]{weaklyincreasingtrees}. 
Combining this interpretation with Lemmas~\ref{lem: decomposition trees} and~\ref{lem: bijection}  {complete} the proof of Theorem \ref{thm: gamma-interpretation}.
\end{proof}

 \section*{Acknowledgements} \label{sec:acknowledgements}
 We thank Matthias Beck and Petter Br\"and\'en for helpful discussions,  {as well as the anonymous referees for useful suggestions}.
 Part of this research was performed while the authors were visiting the Institute for Mathematical and Statistical Innovation (IMSI), which is supported by NSF Grant No.~DMS-1929348.
 Danai Deligeorgaki and Liam Solus were partially supported by the Wallenberg Autonomous Systems and Software Program (WASP) funded by the Knut and Alice Wallenberg Foundation. 
 Liam Solus was further supported by a Research Pairs grant from the Digital Futures Lab at KTH, the Göran Gustafsson Foundation Prize for Young Researchers, and Starting Grant No. 2019-05195 from The Swedish Research Council.

\bibliographystyle{abbrvnat}
\bibliography{newbib}
\end{document}